\def\eqdef{\stackrel{\rm def}{=}}
\def\beq{\begin{equation}}
\def\eeq{\end{equation}}
\def\beqs{\begin{equation*}}
\def\eeqs{\end{equation*}}
\newtheorem{theorem}{Theorem}[section]
\newtheorem{lemma}[theorem]{Lemma}
\newtheorem{corollary}[theorem]{Corollary}
\theoremstyle{definition}
\newtheorem{remark}[theorem]{Remark}
\def\myclearpage{}
\definecolor{darkred}{rgb}{.70,.12,.20}
\definecolor{darkgreen}{rgb}{.20,.52,.14}
\renewcommand{\leq}{\le}
\numberwithin{equation}{section}
\title{Generalized Forchheimer flows in heterogeneous porous media}
\author{Emine Celik and Luan Hoang}
\date{\today}
\begin{document}
\maketitle

\begin{center}
Department of Mathematics and Statistics, Texas Tech University, Box 41042, Lubbock, TX 79409--1042, U.~S.~A.\\
Email addresses:  \texttt{emine.celik@ttu.edu}, \texttt{luan.hoang@ttu.edu}\\
\end{center}

\begin{abstract}
We study the generalized Forchheimer flows of slightly compressible fluids in heterogeneous porous media.
The media's porosity and coefficients of the Forchheimer equation  are functions of the spatial variables. 
The partial differential equation for the pressure is degenerate in its gradient and can be both singular and degenerate in the spatial variables. 
Suitable weighted Lebesgue norms for the pressure, its gradient and time derivative are estimated.
The continuous dependence on the initial and boundary data is established for the pressure and its gradient with respect to those  corresponding  norms.
Asymptotic estimates  are derived even for unbounded boundary data as time tends to infinity.
\end{abstract}


\pagestyle{myheadings}\markboth{E. Celik and L. Hoang}
{Generalized Forchheimer Flows in Heterogeneous Porous Media}

\section{Introduction and preliminaries}\label{intro}

Forchheimer equations \cite{Forchh1901,ForchheimerBook} are commonly used in place of Darcy's Law to describe the fluid dynamics in porous media when the Reynolds number is large. Their nonlinear structure as opposed to the linear Darcy's equation requires new mathematical investigations.
For more thorough introduction to Forchheimer flows and their generalizations, the reader is referred to \cite{ABHI1,HI2}, see also \cite{MuskatBook,BearBook,StraughanBook,NieldBook}.

In previous articles either for incompressible fluids, e.g. \cite{Payne1999a,Straughan2010}, or compressible ones, e.g. \cite{ABHI1,HI1,HI2,HIKS1,HKP1}, the porous media considered are  always homogeneous.  In reality, however, the porous media such as soil, geological media or multi-layer media are not homogeneous. The current paper is to start our investigation of Forchheimer fluid flows in heterogeneous porous media. 
We will develop the model and analyze it mathematically. This lays the foundation for our subsequent study including maximum estimates, higher integrability of the gradient, as well as the structural stability. Such analysis will be needed in mathematical theory of homogenization and upscale computation for non-Darcy fluid flows in heterogeneous porous media.

Let a porous medium be modeled as a  bounded domain $U$ in space $\mathbb{R}^n$ with $C^1$-boundary $\Gamma=\partial U$. Throughout this paper, $n\ge 2$ even though for physics problems $n=2$ or $3$.
Let $x\in \mathbb{R}^n$ and $t\in \mathbb{R}$ be the spatial and time variables.
The porosity of this heterogeneous medium is denoted by $\phi=\phi(x)$ which  depends on the location $x$ and has values in $(0,1]$.

For a fluid flow in the media, we denote the velocity by $v(x,t)\in \mathbb{R}^n,$ pressure by $p(x,t)\in \mathbb{R}$ and density by $\rho(x,t)\in \mathbb{R}^+=[0,\infty)$.

A generalized Forchheimer equation for heterogeneous porous media is
 \beq\label{eq1}
g(x,|v|)v=-\nabla p,
\eeq
where $g(x,s)\ge 0$ is a function defined on $\bar U\times  \mathbb{R}^+$.
It  is a generalization of Darcy and Forchheimer equations \cite{ABHI1,HI1,HI2}.

The dependence  of function $g$ in \eqref{eq1} on the spatial variable $x$  is used to model the heterogeneous media. For homogeneous media, $g$ is independent of $x$.
For instance, when
\beq\label{simg} 
g(x,s)=\alpha,\
 \alpha+\beta s,\
  \alpha+\beta s+\gamma s^2,\
   \alpha +\gamma_m s^{m-1}, 
\eeq 
where $\alpha$, $\beta$, $\gamma$, $m\in(1,2]$, $\gamma_m$ are empirical constants, we have Darcy's law, Forchheimer's two term, three term and power laws, respectively, for homogeneous media, see e.g.~\cite{MuskatBook,BearBook}. 
Moreover, many models of two-term Forchheimer law obtained from experiments, see  \cite{BearBook}, have $\alpha$ and $\beta$ in \eqref{simg} depending on the porosity $\phi$.
For  heterogeneous porous media, $\phi=\phi(x)$, thus, these coefficients become functions of $x$.
This motivates the $x$-dependent model \eqref{eq1}.

In this paper, we study the model when the function $g$ in \eqref{eq1} is a generalized polynomial with non-negative coefficients. More precisely,  the function $g$ is of the form
\beq\label{eq2}
g(x,s)=a_0(x)s^{\alpha_0} + a_1(x)s^{\alpha_1}+\cdots +a_N(x)s^{\alpha_N},\quad s\ge 0,
\eeq
where $N\ge 1,\alpha_0=0<\alpha_1<\cdots<\alpha_N$ are fixed real numbers, the coefficient functions  $a_1(x)$, $a_2(x)$, \ldots, $a_{N-1}(x)$ are non-negative,  and $a_0(x),a_N(x)>0$. 
The number $\alpha_N$ is the degree of $g$ and is denoted by $\deg(g)$. Such a model \eqref{eq2} is sufficiently general to cover most examples in \cite{BearBook}.

From \eqref{eq1} one can solve for $v$ in terms of $\nabla p$ and obtain the equation
\beq\label{eq3}
v=-K(x,|\nabla p|)\nabla p,
\eeq
where the function $K:\bar U\times \mathbb{R}^+\to\mathbb{R}^+$ is defined by
\beq\label{eq4}
K(x,\xi)=\frac{1}{g(x,s(x,\xi))} \quad\text{for } x\in \bar U,\ \xi\ge 0,
\eeq
with $s=s(x,\xi)$ being the unique non-negative solution of  $sg(x,s)=\xi$.

Equation \eqref{eq3} can be seen as a nonlinear generalization of Darcy's equation.
The case when $a_i(x)$'s  are independent of $x$ was studied in depth in \cite{ABHI1,HI1,HI2,HKP1,HK1,HK2}.


In addition to \eqref{eq1} we have the equation of continuity
\beq\label{eq5}
\phi\frac{\partial \rho}{\partial t} +\nabla\cdot(\rho v)=0,
\eeq
   and the equation of state which, for (isothermal)  slightly compressible fluids, is
   \beq\label{eq6}
\frac{1}{\rho}   \frac{d\rho}{dp}=\varpi, 
\quad \text{where the compressibility } \varpi=const.>0.
   \eeq

Substituting  \eqref{eq3} and \eqref{eq5} into \eqref{eq6} we obtain a scalar partial differential equation (PDE) for the pressure:
   \beq\label{eq7}
\phi(x)   \frac{\partial p}{\partial t}=\frac1{\varpi} \nabla\cdot(K(x,|\nabla p|)\nabla p)+K(x,|\nabla p|)|\nabla p|^2.
   \eeq
   
   On the right hand side of \eqref{eq7} the constant $\varpi$ is very small for most slightly compressible fluids in porous media, hence we neglect its second term and study the following reduced equation
   \beq\label{eq8}
\phi(x)   \frac{\partial p}{\partial t}=\frac1{\varpi} \nabla\cdot(K(x,|\nabla p|)\nabla p).
   \eeq
(This simplification is  used  commonly in petroleum engineering. For a full treatment without such a simplification, see \cite{CHK1}.) 

For our mathematical study of \eqref{eq8} below, by scaling the time variable $t\to\varpi^{-1} t$, we can assume, without loss of generality,  that $\varpi=1$.


Throughout the paper, function $g(x,s)$ in \eqref{eq2} is fixed, hence so is $K(x,\xi)$.
The initial boundary value problem (IBVP) of our interest is
\begin{equation}\label{ppb}
\begin{cases}
\phi(x) \begin{displaystyle}\frac{\partial p}{\partial t}\end{displaystyle}
= \nabla \cdot (K(x,|\nabla p|)\nabla p) \quad \text{on } U\times(0, \infty),\\
p=\psi \quad \text{on} \quad \Gamma \times (0, \infty),\\
p(x,0)=p_0(x) \quad \text{on}\quad   U,
\end{cases}
\end{equation} 
where  $p_0(x)$ are $\psi(x,t)$ are given initial and boundary data.

The main goals of this paper are to estimate the solution of \eqref{ppb} in different norms and to establish its continuous dependence on the initial and boundary data. 
Regarding the PDE of \eqref{ppb}, the fact that $\phi(x)$ can be close to zero, alone, makes its left-hand side degenerate. 
In addition, as we will see in Lemma \ref{LemKx} below and the discussion right after that, the $K(x,|\nabla p|)$ is degenerate when $|\nabla p|$ is large, and can  be either very small or very large at different $x$. Therefore, we have to deal with a parabolic equation with different types of degeneracy and singularity. For the degeneracy/singularity in $x$, we use   appropriate weighted Lebesgue and Sobolev norms. 
To identify the weight functions for these norms, we carefully examine the structure of the PDE in \eqref{ppb}, particularly, the function $K(x,\xi)$. It turns out that the porosity function $\phi(x)$ and the function $W_1(x)$, which will be computed explicitly in \eqref{W12}, are the essential weights.
In order to derive differential inequalities for the weighted norms, we make use of a suitable two-weight Poincar\'e-Sobolev inequality, see \eqref{Poincarenew}.
We then proceed and obtain the estimates for the solution in section \ref{psec}, for its gradient in section \ref{GradSec}, and for its  time derivative in section \ref{ptsec}. The continuous dependence in corresponding weighted norms for both solution and its gradient are obtained in  section \ref{dependence}. The results for large time are particularly emphasized to show the long time dynamics of the problem. Their formulations are made simpler than those in  the previous works \cite{HI2,HKP1,HK1,HK2}.


For the remainder of this section, we present main properties of $K(x,\xi)$.
First, we recall some elementary inequalities that will be needed. 
Let $x,y\ge 0$, then
\beq\label{ee3}
(x+y)^p\le 2^{p-1}(x^p+y^p)\quad  \text{for all }  p\ge 1,
\eeq
\beq\label{ee5}
x^\beta \le 1+x^\gamma \quad \text{for all } \gamma\ge \beta\ge 0.
\eeq



The following exponent will be used throughout in our calculations
\beq\label{eq9}
a=\frac{\alpha_N}{\alpha_N+1}\in (0,1).
\eeq


We have from Lemmas III.5 and III.9 in \cite{ABHI1} that
\beq\label{Kderiva}
-aK(x,\xi)\le \xi \frac{\partial K(x,\xi)}{\partial\xi} \le 0\quad \forall \xi\ge 0.
\eeq
This implies  $K(x,\xi)$ is decreasing in $\xi$, hence 
\beq\label{decK}
K(x,\xi)\le K(x,0)=\frac 1{g(x,0)}=\frac 1{a_0(x)}.
\eeq

The function $K(x,\xi)$ can also be estimated from above and below in terms of $\xi$ and coefficient functions $a_i(x)$'s as follows.
Let us define the main weight functions
\beq\label{Mm}
M(x)=\max\{ a_j(x):j=0,\ldots, N\},\quad
m(x)=\min\{a_0(x),a_N(x)\},
\eeq
\beq\label{W12}
W_1(x)=\frac{a_N(x)^a}{2 N M(x)},\quad\text{and}\quad W_2(x)= \frac{NM(x)}{m(x)a_N(x)^{1-a}}.
\eeq

\begin{lemma}\label{LemKx} For $\xi\ge 0$, one has
\beq \label{KK}
\frac{2W_1(x)}{\xi^a +a_N(x)^a}\le K(x,\xi)\le \frac{W_2(x)}{\xi^a}
\eeq
and, consequently, 
\beq \label{Kwithsquare}
W_1(x)\xi^{2-a}-\frac{a_N(x)}{2} \le K(x,\xi)\xi^2\le W_2(x)\xi^{2-a}.
\eeq
\end{lemma}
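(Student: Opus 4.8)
The plan is to derive the two-sided bound on $K(x,\xi)$ directly from the defining relation $sg(x,s)=\xi$ together with the explicit polynomial form \eqref{eq2} of $g$, and then multiply through by $\xi^2$ (or $\xi^{2-a}$, after a short manipulation) to obtain \eqref{Kwithsquare}. Recall from \eqref{eq4} that $K(x,\xi)=1/g(x,s)$ where $s=s(x,\xi)\ge 0$ solves $sg(x,s)=\xi$; so it suffices to estimate $g(x,s)$ from above and below in terms of $\xi$. The key is to compare $g(x,s)=\sum_{j=0}^N a_j(x)s^{\alpha_j}$ with its two ``extreme'' monomials $a_0(x)s^{\alpha_0}=a_0(x)$ and $a_N(x)s^{\alpha_N}$.

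First I would handle the \emph{lower} bound on $K$, i.e.\ the \emph{upper} bound on $g(x,s)$. Using $M(x)=\max_j a_j(x)$ and the elementary inequality \eqref{ee5} in the form $s^{\alpha_j}\le 1+s^{\alpha_N}$ for each $j$ (since $0\le\alpha_j\le\alpha_N$), one gets $g(x,s)\le M(x)\sum_{j=0}^N s^{\alpha_j}\le (N+1)M(x)(1+s^{\alpha_N})\le 2NM(x)(1+s^{\alpha_N})$ (using $N\ge 1$). Now I relate $1+s^{\alpha_N}$ to $\xi$: from $\xi=sg(x,s)\ge s\,a_N(x)s^{\alpha_N}=a_N(x)s^{\alpha_N+1}$ one gets $s^{\alpha_N+1}\le \xi/a_N(x)$, hence $s^{\alpha_N}=(s^{\alpha_N+1})^a\le (\xi/a_N(x))^a = \xi^a/a_N(x)^a$, recalling $a=\alpha_N/(\alpha_N+1)$ from \eqref{eq9}. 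Therefore $g(x,s)\le 2NM(x)\bigl(1+\xi^a/a_N(x)^a\bigr) = \dfrac{2NM(x)}{a_N(x)^a}\bigl(a_N(x)^a+\xi^a\bigr) = \dfrac{\xi^a+a_N(x)^a}{W_1(x)}$, which inverts to the claimed lower bound $K(x,\xi)=1/g(x,s)\ge 2W_1(x)/(\xi^a+a_N(x)^a)$.

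For the \emph{upper} bound on $K$, i.e.\ the \emph{lower} bound on $g(x,s)$, I distinguish according to the size of $s$. Since $g(x,s)\ge a_0(x)s^{\alpha_0}+a_N(x)s^{\alpha_N} = a_0(x)+a_N(x)s^{\alpha_N}\ge m(x)(1+s^{\alpha_N})$, and on the other hand $\xi=sg(x,s)$, I want a lower bound on $g$ of the form $g(x,s)\ge c(x)\xi^a$. Write $\xi = s\,g(x,s)$ and note $g(x,s)^{1/a}\cdot\text{(something)}$... more directly: from $\xi\le s\cdot (N+1)M(x)(1+s^{\alpha_N})\le 2NM(x)\cdot s(1+s^{\alpha_N})$. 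If $s\le 1$ then $s(1+s^{\alpha_N})\le 2s^{1}\le 2$, which would only bound $\xi$, so instead I bound $g$ below by $m(x)$ directly when $s\le 1$ and separately track the large-$s$ regime; the cleanest route is: $\xi = sg(x,s)$, and $g(x,s)\ge m(x)\max\{1,s^{\alpha_N}\}\ge m(x)s^{\alpha_N}$ gives, combined with $g(x,s)\ge m(x)$, that $g(x,s)^{\alpha_N+1}\ge g(x,s)\cdot (m(x))^{\alpha_N}... $ — the efficient identity is $\xi^{\alpha_N} = s^{\alpha_N}g(x,s)^{\alpha_N}\le \dfrac{g(x,s)}{m(x)}\cdot g(x,s)^{\alpha_N} = \dfrac{g(x,s)^{\alpha_N+1}}{m(x)}$, whence $g(x,s)\ge m(x)^{1/(\alpha_N+1)}\xi^{\alpha_N/(\alpha_N+1)} = m(x)^{1-a}\xi^a$. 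Then $K(x,\xi)=1/g(x,s)\le m(x)^{a-1}\xi^{-a}$. Since $W_2(x)=NM(x)/(m(x)a_N(x)^{1-a})\ge m(x)^{1-a}\cdot\bigl(a_N(x)/m(x)\bigr)^{\,\cdot}\cdots$, one checks $NM(x)/(m(x)a_N(x)^{1-a})\ge m(x)^{a-1}$ because $NM(x)\ge a_N(x)\ge m(x)$ forces $NM(x)/a_N(x)^{1-a}\ge m(x)^a$; this yields $K(x,\xi)\le W_2(x)/\xi^a$, completing \eqref{KK}.

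Finally, \eqref{Kwithsquare} follows by multiplying \eqref{KK} through by $\xi^2$: the upper estimate is immediate, $K(x,\xi)\xi^2\le W_2(x)\xi^{2-a}$; for the lower estimate, $K(x,\xi)\xi^2\ge \dfrac{2W_1(x)\xi^2}{\xi^a+a_N(x)^a}$, and I bound the denominator using $\xi^a+a_N(x)^a\le 2\max\{\xi^a,a_N(x)^a\}$, splitting into the two cases $\xi\ge a_N(x)$ (giving $K\xi^2\ge W_1(x)\xi^{2-a}$ directly) and $\xi< a_N(x)$ (where $\xi^{2-a}\le a_N(x)^{1-a}\xi\le a_N(x)^{2-a}$ and the term $-a_N(x)/2$ absorbs the slack — indeed one wants $W_1(x)\xi^{2-a}-a_N(x)/2\le K(x,\xi)\xi^2$, and since $W_1(x)\le a_N(x)^a/(2NM(x))\le 1/2$ times something small, the negative constant $a_N(x)/2$ dominates). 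The main obstacle is purely bookkeeping: keeping the constants $N$, $M(x)$, $m(x)$, $a_N(x)^a$ aligned exactly so that the stated $W_1,W_2$ in \eqref{W12} come out, and correctly handling the additive $+1$ (equivalently the $\max\{1,s^{\alpha_N}\}$ dichotomy) in the upper bound on $g$ versus the clean power bound needed for the lower bound on $g$ — there is no deep difficulty, only the risk of an off-by-a-constant slip.
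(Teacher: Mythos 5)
Your overall strategy matches the paper's (bound $g(x,s)$ above and below in terms of $\xi$ via $sg(x,s)=\xi$, invert, then multiply by $\xi^2$ and split cases), but there is a concrete constant error that breaks the stated inequalities as written. Your chain for the lower bound on $K$ only establishes $K(x,\xi)\ge W_1(x)/(\xi^a+a_N(x)^a)$, not the claimed $2W_1(x)/(\xi^a+a_N(x)^a)$: from $g(x,s)\le 2NM(x)\bigl(1+\xi^a/a_N(x)^a\bigr)=\bigl(\xi^a+a_N(x)^a\bigr)/W_1(x)$ the inversion gives $K=1/g\ge W_1/(\xi^a+a_N^a)$; the factor $2$ you reinstate in "which inverts to the claimed lower bound" does not come from anywhere. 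The loss happens at the crude count $(N+1)M(x)(1+s^{\alpha_N})\le 2NM(x)(1+s^{\alpha_N})$. The paper avoids it by applying $s^{\alpha_j}\le 1+s^{\alpha_N}$ only to the $N-1$ intermediate exponents and grouping the terms $1$ and $s^{\alpha_N}$, so that $\sum_{j=0}^N s^{\alpha_j}\le N(1+s^{\alpha_N})$ and $g\le NM(x)(\xi^a+a_N(x)^a)/a_N(x)^a$, which is exactly what produces the $2W_1$ of \eqref{KK} with $W_1$ as in \eqref{W12}. This is not cosmetic: in your last paragraph you use $K\xi^2\ge 2W_1\xi^2/(\xi^a+a_N^a)$ so that the case $\xi\ge a_N(x)$ gives $K\xi^2\ge W_1\xi^{2-a}$ without loss; with only $W_1$ available you would get $\tfrac12 W_1\xi^{2-a}$, which does not yield the stated lower bound of \eqref{Kwithsquare}. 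The fix is the one-line sharper counting above. Relatedly, in the case $\xi<a_N(x)$ your "the negative constant dominates" should be closed by the explicit inequality $W_1(x)a_N(x)^{2-a}=a_N(x)^2/(2NM(x))\le a_N(x)/2$ (using $a_N\le M$ and $N\ge1$), which makes the left side of \eqref{Kwithsquare} nonpositive there.

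On the upper bound of \eqref{KK}, your argument is correct and takes a genuinely different (and cleaner) route than the paper: from $g\ge m(x)s^{\alpha_N}$ you get $\xi^{\alpha_N}=s^{\alpha_N}g^{\alpha_N}\le g^{\alpha_N+1}/m(x)$, hence $g\ge m(x)^{1-a}\xi^a$ and $K\le m(x)^{a-1}\xi^{-a}$, and the comparison $m^a a_N^{1-a}\le a_N\le M\le NM$ indeed gives $K\le W_2(x)/\xi^a$. The paper instead combines $\xi\le sNM(x)(1+s^{\alpha_N})$ with $s\le(\xi/a_N(x))^{1/(\alpha_N+1)}$ to land directly on $g\ge m(x)a_N(x)^{1-a}\xi^a/(NM(x))$, i.e., exactly $W_2$ with no extra comparison step; your version trades that for a shorter derivation plus the (correctly verified) comparison to $W_2$.
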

\begin{proof}
Let $s=s(x,\xi)$ be defined in \eqref{eq4}. Then
\beqs
\xi=sg(x,s)=a_0(x)s+a_1(x)s^{\alpha_1+1}+\cdots +a_N(x)s^{\alpha_N+1}\ge a_N(x) s^{\alpha_N+1},
\eeqs
hence
\beq\label{sxi2}
s\le \Big(\frac\xi{a_N(x)}\Big)^{\frac 1{\alpha_N+1}}.
\eeq

Since the exponents $\alpha_j$  are increasing in $j$, then by \eqref{ee5},  one has $s^{\alpha_j}\le 1+s^{\alpha_N}$ for $j=1,\ldots,N-1$. 
Thus, we have
\beq\label{gM}
g(x,s)\le M(x)(1+s+\ldots+s^{\alpha_N})\le M(x) N (1+s^{\alpha_N}).
\eeq
Combining \eqref{sxi2} and \eqref{gM} yields
\beqs
g(x,s)\le M(x) N\Big[ 1+\Big(\frac \xi{a_N(x)}\Big)^{\frac{\alpha_N}{\alpha_N+1}}\Big]
= N M(x) \Big[ 1+\Big(\frac \xi{a_N(x)}\Big)^{a}\Big]=\frac{ N M(x)( \xi^a+a_N(x)^a)}{a_N(x)^a}.
\eeqs
Therefore
\beqs
K(x,\xi)=\frac 1{g(x,s)}\ge\frac{a_N(x)^a}{ NM(x)\big (\xi^a+a_N(x)^a\big) }=\frac{2W_1(x)}{\xi^a+a_N(x)^a},
\eeqs
which proves the first inequality in \eqref{KK}.

Now, using  \eqref{gM}
\beq\label{sxi1}
\xi=sg(x,s)  \le s\cdot M(x)N (1+s^{\alpha_N}).
\eeq

Note that $g(x,s)\ge m(x)(1+s^{\alpha_N})$, then by  \eqref{sxi1} and \eqref{sxi2}, we have 
\beqs
g(x,s) \ge \frac{m(x)\xi}{NM(x)}\cdot \frac{1}{s}
\ge \frac{m(x) \xi}{NM(x)} \cdot \frac{a_N(x)^{\frac 1{\alpha_N+1}}}{\xi^{\frac 1{\alpha_N+1}}}= \frac{m(x) a_N(x)^{1-a}\xi^a}{NM(x)}.
\eeqs
Therefore,
\beqs
K(x,\xi)=\frac 1{g(x,s)} \le \frac{NM(x)}{m(x) a_N(x)^{1-a}\xi^a} =\frac{W_2(x)}{\xi^a},
\eeqs
hence we obtain the second inequality of \eqref{KK}.

Next, multiplying \eqref{KK} by $\xi^2$, we have 
 \beq \label{K1}
\frac{2W_1(x)\xi^2}{\xi^a+a_N(x)^a} \le K(x,\xi)\xi^2\le W_2(x)  \xi^{2-a}.
\eeq
The second inequality of \eqref{K1} is exactly that of \eqref{Kwithsquare}.
For the first inequality of \eqref{Kwithsquare}, if  $\xi\ge a_N(x)$ then \eqref{K1} gives
\beq\label{K2}
K(x,\xi)\xi^2  \ge \frac{2W_1(x)\xi^2}{2\xi^a} =W_1(x)\xi^{2-a}.
\eeq
Thus, for all $\xi\ge 0$
\beq\label{K3}
K(x,\xi)\xi^2\ge W_1(x)(\xi^{2-a}-a_N(x)^{2-a})= W_1(x)\xi^{2-a}-W_1(x)a_N(x)^{2-a}.\eeq
Note that
\beq\label{W1a}
W_1(x)a_N(x)^{2-a}=\frac{a_N(x)^2}{2NM(x)}\le \frac{a_N(x)}{2N}\le \frac{a_N(x)}2.
\eeq
Hence  \eqref{K1},  \eqref{K3} and \eqref{W1a} yield the first inequality of \eqref{Kwithsquare}.
The proof is complete.
\end{proof}

We now discuss some characters of the PDE in \eqref{ppb}. 
On the left-hand side, the porosity $\phi(x)$ can be close to zero, hence giving the degeneracy in variable  $x$. 
On the right-hand side, the dependence of $K(x,\xi)$ on $\xi$ as seen in \eqref{KK} shows that the PDE is degenerate in $|\nabla p|$ as  $|\nabla p|\to\infty$.
Moreover, since the weights $W_1(x)$ and $W_2(x)$ can tend to either zero or infinity at different location $x$, then, thanks to \eqref{KK} again, so can $K(x,\xi)$.
Therefore the PDE can become singular and/or degenerate in $x$. 
The above fact about the weights $W_1(x)$ and $W_2(x)$ is supported by practical models in \cite{BearBook}.
For example, the two-term Forchheimer law (i.e. $N=1$) has  the coefficients $a_0$ and $a_1$ going to zero as $\phi\to 1$, and to infinity as $\phi \to 0$.
In this case, $\phi$ is required to be in $(0,1)$.
For heterogeneous media, constant $\phi$ becomes function $\phi=\phi(x)\in (0,1)$ and it may be close $1$ or $0$ at different values of $x$.
Therefore, thanks to the mentioned behavior of $a_0(x)$ and $a_1(x)$, the weights $W_1(x)$ and $W_2(x)$ possess the stated property.

Lastly for this section, we recall an  important monotonicity property for the PDE in \eqref{ppb}.

\begin{lemma}[c.f.~\cite{ABHI1}, Proposition III.6 and Lemma III.9]\label{mono2}
For any $y,y' \in \mathbb{R}^n$, one has 
\beqs
(K(x,|y|)y-K(x,|y'|)y')\cdot (y'-y) \ge(1-a)K(x,\max \{ |y|,|y'| \})|y-y'|^2.
\eeqs
\end{lemma}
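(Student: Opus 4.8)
The plan is to treat $x$ as a fixed parameter, so that the statement reduces to a one-dimensional fact about the scalar profile $\xi\mapsto K(x,\xi)$; the only structural inputs needed are the derivative bound \eqref{Kderiva} and the monotonicity \eqref{decK} it implies. Write $h(\xi)=K(x,\xi)$ and $F(z)=h(|z|)\,z$ for $z\in\mathbb{R}^n$. First I would record that $h$ is $C^1$ on $(0,\infty)$---this is part of Lemmas III.5 and III.9 of \cite{ABHI1}, already invoked for \eqref{Kderiva}---and that $F$ is Lipschitz on all of $\mathbb{R}^n$, since $h(\xi)\le h(0)=1/a_0(x)<\infty$ by \eqref{decK}; this justifies using the fundamental theorem of calculus along line segments.

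Next, fix $y,y'\in\mathbb{R}^n$, set $z(t)=y'+t(y-y')$ for $t\in[0,1]$, so that $z(0)=y'$, $z(1)=y$, $z'(t)=y-y'$, and write
\beqs
\bigl(K(x,|y|)y-K(x,|y'|)y'\bigr)\cdot(y-y')=\bigl(F(y)-F(y')\bigr)\cdot(y-y')=\int_0^1\Bigl(\tfrac{d}{dt}F(z(t))\Bigr)\cdot z'(t)\,dt.
\eeqs
For every $t$ with $z(t)\ne 0$ one has $\tfrac{d}{dt}F(z(t))=h'(|z|)\,|z|^{-1}(z\cdot z')\,z+h(|z|)\,z'$, and hence the integrand equals $h'(|z|)\,(z\cdot z')^2/|z|+h(|z|)\,|z'|^2$, where $|z|,z,z'$ abbreviate $|z(t)|,z(t),z'(t)$.

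The core estimate is then a lower bound on that integrand: since $h'\le 0$ by \eqref{Kderiva} and $(z\cdot z')^2\le|z|^2|z'|^2$ by Cauchy--Schwarz, the first term is at least $h'(|z|)\,|z|\,|z'|^2$, and combining this with $|z|\,h'(|z|)\ge -a\,h(|z|)$, again from \eqref{Kderiva}, gives $\bigl(\tfrac{d}{dt}F(z(t))\bigr)\cdot z'\ge(1-a)\,h(|z(t)|)\,|y-y'|^2$. Since $|z(t)|=|(1-t)y'+ty|\le\max\{|y|,|y'|\}$ and $h=K(x,\cdot)$ is nonincreasing, we may replace $h(|z(t)|)$ by $h(\max\{|y|,|y'|\})$; integrating over $t\in[0,1]$ then yields the claimed bound $(1-a)K(x,\max\{|y|,|y'|\})|y-y'|^2$. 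The one delicate point---and the main obstacle---is the set where $z(t)=0$, which happens only when $y$ and $y'$ are antiparallel and then at a single $t_0\in(0,1)$, where the pointwise formula for $\tfrac{d}{dt}F(z(t))$ breaks down. I would dispose of it either by noting that $F$ is Lipschitz, so the integral identity above still holds with the integrand defined only a.e., while $|z|\,h'(|z|)$ stays bounded as $|z|\to 0$ so the a.e. lower bound is unaffected; or, more transparently, by first proving the inequality with $y'$ replaced by a nearby $y'+\varepsilon w$ whose open segment to $y$ avoids the origin, and then letting $\varepsilon\to 0$ using continuity of both sides in $y'$.
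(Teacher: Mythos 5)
Your argument is correct and is essentially the standard proof: the paper gives no proof of Lemma \ref{mono2} and simply cites \cite{ABHI1}, whose argument is the same one you use --- integrate $t\mapsto K(x,|z(t)|)z(t)$ along the segment $z(t)=y'+t(y-y')$, bound the integrand pointwise via \eqref{Kderiva} together with Cauchy--Schwarz, and pass from $h(|z(t)|)$ to $K(x,\max\{|y|,|y'|\})$ by the monotonicity \eqref{decK}; your handling of the possible zero of $z(t)$ (Lipschitz/a.e.\ or perturbation) is a legitimate way to close the only delicate point. One remark: you proved the inequality with the factor $(y-y')$ on the left, which is the version actually invoked in \eqref{IV}; the statement as printed has $(y'-y)$, which is a sign typo, since with that orientation the left-hand side is nonpositive whenever $y\ne y'$.
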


In order to estimate the pressure gradient, similar to \cite{ABHI1,HI1,HI2}, we will make use of  the function
\beq\label{H11}
H(x,\xi)=\int_0^{\xi^2}K(x,\sqrt s )ds\quad \text{for } x\in U,\ \xi\ge 0.
\eeq
Same as (96) of \cite{ABHI1}, we have the comparison 
\beq\label{H33}
K(x,\xi)\xi^2 \le H(x,\xi)\le 2 K(x,\xi)\xi^2.
\eeq
Combining \eqref{H33} with \eqref{Kwithsquare} gives 
\beq\label{H22}
W_1(x)\xi^{2-a}-\frac{a_N(x)}2 \le H(x,\xi) \le 2W_2(x)\xi^{2-a}.
\eeq

\section{Estimates for the pressure }\label{psec}

We start analyzing the  IBVP \eqref{ppb}. To deal with the non-homogeneous boundary condition, let $\Psi(x,t)$ be an extension (in $x$) of $\psi(x,t)$ from   boundary $\Gamma$  to $U$. Our results are stated in terms of $\Psi$, but can be easily converted to $\psi$, see e.g. \cite{HI1}.

Let $\bar{p}=p-\Psi$, then we have 
\begin{align}\label{pbareqn}
\phi(x) 
\begin{displaystyle}
 \frac{\partial\bar p}{\partial t}\end{displaystyle}
&= \nabla \cdot (K(x,|\nabla p|)\nabla p)-\phi(x)\Psi_t \quad \text{on }  U\times(0, \infty),\\
\bar{p}&=0\quad\text{on }\Gamma \times (0, \infty).\nonumber
\end{align}

The analysis will make use of the following two-weight Poincar\'{e}-Sobolev inequality
\beq\label{Poincarenew}
\left( \int_U|u|^2 \phi(x) dx \right)^\frac12 \le c_P\left( \int_U W_1(x)|\nabla u|^{2-a}dx \right)^\frac{1}{2-a}
\eeq
for functions $u$ in certain classes that satisfy $u=0$ on $\Gamma$.


For some classes of functions $\phi$, $W_1$, $u$ such that the inequality \eqref{Poincarenew} is valid, see e.g. \cite{SW1992, DTP}.
Here we give a simple example that \eqref{Poincarenew} holds under the  so-called Strict Degree Condition
\beqs
\deg(g) < \frac{4}{n-2} .\tag{SDC}
\eeqs

Note that in the three dimensional cases (n=3), (SDC) reads deg$(g)<3$, hence it holds for the commonly used two-term, three-term and power Forchheimer models, see \eqref{simg}.

We recall the standard Sobolev-Poincar\'e's inequality.
Let $\mathring W^{1,q}(U)$ be the space of functions in $W^{1,q}(U)$ with vanishing trace on the boundary.
If $1\le q<n$ then 
\beq\label{PS0} \| f\|_{L^{q^*}(U)} \le c \| \nabla f\|_{L^q(U)} \text{ for all }f\in\mathring{W}^{1,q}(U),
\eeq
where the constant $c$ depends on $q$, $n$ and the domain $U$, and  
$q^*=n q/(n-q).$   

One can easily verify that  (SDC) is equivalent to $2<  (2-a)^*$.

Assume (SDC). 
Let $q<2-a$ such that $r=q^*>2$. 
Let $c$ be the positive constant in \eqref{PS0}.
Assume further that 
\beq\label{CP}
c_P\eqdef c\Big( \int_U W_1(x)^{-\frac{q}{2-a-q}} dx \Big)^\frac{2-a-q}{(2-a)q}\Big( \int_U \phi(x)^\frac{r}{r-2} dx \Big)^\frac{r-2}{2r}<\infty. 
\eeq
Let $u\in \mathring W^{1,q}(U)$. Then by H\"older's inequality  and standard Sobolev-Poincar\'e inequality \eqref{PS0}
\beqs
\Big( \int_U|u|^2 \phi(x) dx \Big)^\frac12 
\le \Big( \int_U|u|^r dx \Big)^\frac1r \Big( \int_U \phi(x)^\frac{r}{r-2} dx \Big)^\frac{r-2}{2r}  
\le c\Big( \int_U |\nabla u|^q dx \Big)^\frac1q \Big( \int_U \phi(x)^\frac{r}{r-2} dx \Big)^\frac{r-2}{2r}.
\eeqs
Since $q<2-a$, applying H\"older's inequality again to the second to last integral, we obtain \eqref{Poincarenew} with $c_P$ defined by \eqref{CP}.



\medskip

The above example shows the validity of \eqref{Poincarenew} for reasonable $\phi$, $W_1$ while $u$ belongs to a standard Sobolev space.
Nonetheless, for the purpose of this paper, it suffices to assume, without focusing on technical  weighted Sobolev spaces,  that the inequality \eqref{Poincarenew} always holds true for $u=\bar p$, as well as $u=\bar P$ in section \ref{dependence}.

\medskip
 \noindent {\bf Notation.}  
 The following notations will be used throughout the paper. 
 
$\bullet$ If $f(x)\ge 0$ is a function on $U$, then define
\beq\label{Lphi}
L^p_f(U)=\Big\{ u(x):\|u\|_{L^p_f(U)}\eqdef \Big(\int_U f(x)|u(x)|^pdx\Big)^{1/p}<\infty\Big\}.
\eeq
Notation $\|\cdot\|_{L^p_f}$ will be used as a short form of $ \|\cdot\|_{L^p_f(U)}$.

$\bullet$ We will use the symbol $C$ to denote a generic positive constant which may change its values  from place by place, may depend on the domain $U$, dimension $n$ and the Sobolev constant $c_P$ in \eqref{Poincarenew}, but not on  individual functions $a_i(x)$'s, and not on the initial data and boundary data. Constants $C_0,C_1,C_2,\ldots$ have fixed values within a proof, while  $d_1,d_2,\ldots$ are fixed positive constants throughout the paper.

$\bullet$  The notation $p_t$ stands for $\frac{\partial p}{\partial t}$. Similarly, $p_{1,t}=\frac{\partial p_1}{\partial t}$, $p_{2,t}=\frac{\partial p_2}{\partial t}$, etc.

$\bullet$ For a function $f(x,t)$, we denote $f(t)=f(\cdot,t)$.

\medskip
In this section, we  derive estimates for $\bar{p}(x,t)$ in $L^2_\phi(U)$.

\begin{lemma}
If $t>0$ then
\beq\label{dpwG0}
\frac{d}{dt} \int_U \bar{p}^2(x,t) \phi(x)  dx+\int_UK(x,|\nabla p(x,t)|)|\nabla p(x,t)|^2 dx \le C G_0(t)  ,
\eeq
where $C>0$ and 
\beq\label{G0}
 G_0(t)=B_1+\int_U a_0(x)^{-1}|\nabla \Psi(x,t)|^2 dx+ \int_U W_1(x)|\nabla \Psi(x,t)|^{2-a} dx
+\Big( \int_U |\Psi_t(x,t)|^{2} \phi(x) dx\Big)^{\frac {2-a}{2(1-a)}}
\eeq
with
\beq\label{B1}
B_1=\int_U a_N(x) dx.
\eeq
\end{lemma}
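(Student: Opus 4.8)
The plan is to perform the standard energy estimate: multiply the PDE in \eqref{pbareqn} by $\bar p$, integrate over $U$, and control the resulting terms using the structural bounds on $K$ from Lemma \ref{LemKx} together with the Poincar\'e--Sobolev inequality \eqref{Poincarenew}. First I would write $\int_U \phi \bar p\,\bar p_t\,dx = \tfrac12\tfrac{d}{dt}\int_U \bar p^2\phi\,dx$, so the left-hand side of \eqref{pbareqn} paired with $\bar p$ produces the first term in \eqref{dpwG0} (up to the harmless factor $\tfrac12$, absorbed into $C$). On the right-hand side, integration by parts on $\int_U \nabla\cdot(K(x,|\nabla p|)\nabla p)\,\bar p\,dx$ gives $-\int_U K(x,|\nabla p|)\nabla p\cdot\nabla\bar p\,dx$ with no boundary term since $\bar p=0$ on $\Gamma$. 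Writing $\nabla\bar p = \nabla p-\nabla\Psi$, this splits as $-\int_U K(x,|\nabla p|)|\nabla p|^2\,dx + \int_U K(x,|\nabla p|)\nabla p\cdot\nabla\Psi\,dx$; the first piece is exactly the (negated) second term of \eqref{dpwG0}, which we move to the left.

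Next I would estimate the two remaining terms, $I_1 = \int_U K(x,|\nabla p|)\nabla p\cdot\nabla\Psi\,dx$ and $I_2 = -\int_U \phi(x)\Psi_t\,\bar p\,dx$. For $I_1$, by Cauchy--Schwarz in the $\xi$-variable one bounds $|K\nabla p\cdot\nabla\Psi| \le \tfrac12 K|\nabla p|^2 + \tfrac12 K(x,|\nabla p|)|\nabla\Psi|^2$; the first half is absorbed into the $K|\nabla p|^2$ term on the left (it costs only a factor $\tfrac12$), and for the second half I use $K(x,|\nabla p|)\le K(x,0)=1/a_0(x)$ from \eqref{decK} to get a contribution bounded by $\tfrac12\int_U a_0(x)^{-1}|\nabla\Psi|^2\,dx$, matching the second term of $G_0$. (An alternative giving the $W_1|\nabla\Psi|^{2-a}$ term of $G_0$ is to Young-split so that the $\Psi$-factor appears to the power $2-a$; one keeps both possibilities since $G_0$ contains both, so whichever split is needed for the Poincar\'e step below is available.) For $I_2$, apply H\"older with exponents matched to \eqref{Poincarenew}: $|I_2| \le \|\Psi_t\|_{L^2_\phi}\,\|\bar p\|_{L^2_\phi} \le c_P\|\Psi_t\|_{L^2_\phi}\big(\int_U W_1|\nabla\bar p|^{2-a}\big)^{1/(2-a)}$. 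Now use $|\nabla\bar p|^{2-a}\le 2^{1-a}(|\nabla p|^{2-a}+|\nabla\Psi|^{2-a})$ from \eqref{ee3}, and on the $|\nabla p|$ part invoke the lower bound $W_1(x)|\nabla p|^{2-a} \le K(x,|\nabla p|)|\nabla p|^2 + \tfrac12 a_N(x)$ from \eqref{Kwithsquare}. This converts $\int_U W_1|\nabla p|^{2-a}\,dx$ into $\int_U K|\nabla p|^2\,dx + \tfrac12 B_1$.

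The last step is a Young's inequality to decouple the product. After the substitutions, $|I_2|$ is bounded by a constant times $\|\Psi_t\|_{L^2_\phi}\big(\int_U K|\nabla p|^2\,dx + B_1 + \int_U W_1|\nabla\Psi|^{2-a}\,dx\big)^{1/(2-a)}$. Denote $X=\int_U K(x,|\nabla p|)|\nabla p|^2\,dx$ (the quantity we wish to keep on the left). Since $1/(2-a)<1$ (because $0<a<1$), applying Young's inequality with exponents $\tfrac{2-a}{1-a}$ and $2-a$ to the product $\|\Psi_t\|_{L^2_\phi}\cdot X^{1/(2-a)}$ yields a term $\tfrac14 X$ — absorbed into the left-hand side — plus a term bounded by $C\|\Psi_t\|_{L^2_\phi}^{(2-a)/(1-a)} = C\big(\int_U|\Psi_t|^2\phi\,dx\big)^{(2-a)/(2(1-a))}$, which is precisely the fourth term of $G_0$; the leftover $(B_1 + \int_U W_1|\nabla\Psi|^{2-a})^{1/(2-a)}$ factor is handled by \eqref{ee5} (or a further Young split) to linearize it into $B_1 + \int_U W_1|\nabla\Psi|^{2-a}\,dx$, matching the first and third terms of $G_0$. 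Collecting everything and renaming constants gives \eqref{dpwG0}.

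The main obstacle I anticipate is the bookkeeping in the final Young's inequality: one must track the several occurrences of $X=\int_U K|\nabla p|^2$ carefully so that the total coefficient of $X$ coming from $I_1$ and $I_2$ stays strictly below $1$ (so it can be absorbed), while simultaneously choosing the Young exponents so that the residual $\Psi_t$ power is exactly $(2-a)/(2(1-a))$ and no stray powers of $B_1$ or $\int_U W_1|\nabla\Psi|^{2-a}$ survive. The choice of exponent $\tfrac{2-a}{1-a}$ is forced by requiring its conjugate to be $2-a$ so that it cancels the $1/(2-a)$ power on $X$; verifying this is consistent and that all weight functions land with the exponents appearing in $G_0$ is the delicate part, but it is routine once the scheme above is set up.
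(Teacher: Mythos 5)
Your proposal is correct and follows essentially the same route as the paper's proof: testing \eqref{pbareqn} with $\bar p$, splitting $\nabla\bar p=\nabla p-\nabla\Psi$, handling $I_1$ with Cauchy's inequality and \eqref{decK}, and handling $I_2$ with the weighted Poincar\'e--Sobolev inequality \eqref{Poincarenew}, the bounds \eqref{ee3} and \eqref{Kwithsquare}, and Young's inequality with exponents $2-a$ and $\tfrac{2-a}{1-a}$. The only difference is that you convert $\int_U W_1|\nabla\bar p|^{2-a}\,dx$ into $K|\nabla p|^2$, $B_1$, and $W_1|\nabla\Psi|^{2-a}$ terms before applying Young, whereas the paper applies Young first; this is immaterial.
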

\begin{proof}
Multiplying equation \eqref{pbareqn} by $\bar{p}(x,t)$,   integrating over $U$, and using the integration by parts, we obtain 
\beqs
\frac{1}{2}\frac{d}{dt} \int_U \bar{p}^2 \phi dx=- \int_UK(x,|\nabla p|)\nabla p\cdot \nabla  \bar{p} dx - \int_U \bar{p} \Psi_t \phi dx.
\eeqs
Substituting  $\bar{p}=p-\Psi$ into the first integral on the right-hand side, and applying Cauchy-Schwarz inequality give 
\begin{multline}\label{dpsquare2}
\frac{1}{2}\frac{d}{dt} \int_U \bar{p}^2 \phi dx+\int_UK(x,|\nabla p|){|\nabla p|}^2 dx =\int_UK(x,|\nabla p|)\nabla \Psi \cdot \nabla {p} dx - \int_U \bar{p} \Psi_t \phi dx\\
\le \int_UK(x,|\nabla p|)|\nabla \Psi|  |\nabla {p}| dx + \int_U |\bar{p}| |\Psi_t| \phi dx
\eqdef I_1+I_2.
\end{multline}
%


$\bullet$ For $I_1$ in \eqref{dpsquare2}, applying Cauchy's inequality, we have 
\begin{align*}
I_1= \int_UK(x,|\nabla p|)|\nabla \Psi| \cdot |\nabla {p}| dx \le \frac 14\int_UK(x,|\nabla p|) |\nabla {p}|^2 dx
+\int_UK(x,|\nabla p|)|\nabla \Psi|^2 dx .
\end{align*}
Estimating  the last  integral by \eqref{decK}, we get 
\begin{align}\label{dpI12}
I_1\le \frac 14\int_UK(x,|\nabla p|) |\nabla {p}|^2 dx+\int_U\frac 1{a_0(x)}|\nabla \Psi|^2 dx.
\end{align}

$\bullet$ For $I_2$ in \eqref{dpsquare2}, applying H\"older's inequality and using the weighted Sobolev-Poincar\'e inequality \eqref{Poincarenew}, and applying Young's inequality with powers $2-a$ and $\frac{2-a}{1-a}$, we have 
\begin{align*}
I_2 
&\le \Big( \int_U |\bar{p}|^{2}\phi dx\Big)^{\frac 1{2}}\Big( \int_U |\Psi_t|^{2} \phi dx\Big)^{\frac 1{2}}\le c_P \Big(\int_U W_1(x)|\nabla \bar{p}|^{2-a} dx\Big)^{\frac 1{2-a}} \Big( \int_U |\Psi_t|^{2} \phi dx\Big)^{\frac 1{2}}\\
& \le 2^{a-3}  \int_U W_1(x)|\nabla \bar{p}|^{2-a} dx+C\Big( \int_U |\Psi_t|^{2} \phi dx\Big)^{\frac {2-a}{2(1-a)}}. 
\end{align*}
By triangle inequality, \eqref{ee3}, and relation \eqref{Kwithsquare} we estimate
\begin{align}
\int_U W_1(x)|\nabla \bar{p}|^{2-a}  dx
&\le 2^{1-a}\int_U (W_1(x)|\nabla p|^{2-a} +  W_1(x)|\nabla \Psi|^{2-a} )dx\nonumber \\
&\le   2^{1-a}\int_U \Big(K(|\nabla p|)|\nabla p|^{2}  +\frac{a_N(x)}2+  W_1(x)|\nabla \Psi|^{2-a} \Big) dx \nonumber \\
&\le   2^{1-a}\int_U K(|\nabla p|)|\nabla p|^{2}  dx +2^{-a} B_1+ 2^{1-a}\int_U  W_1(x)|\nabla \Psi|^{2-a} dx\label{WKbar}.
\end{align}
Therefore, 
\beq\label{dpI23}
I_2
\le \frac14  \int_UK(x,|\nabla p|)|\nabla p|^2 dx
+C B_1
+ C\int_U W_1(x)|\nabla \Psi|^{2-a} dx
+C\Big( \int_U |\Psi_t|^{2} \phi dx\Big)^{\frac {2-a}{2(1-a)}}.
\eeq
Combining \eqref{dpsquare2}, \eqref{dpI12},  and \eqref{dpI23}, we have 
\begin{multline*}
\frac{1}{2}\frac{d}{dt} \int_U\bar{p}^2 \phi  dx+\frac12\int_UK(x,|\nabla p|){|\nabla p|}^2 dx\\ 
 \le  C\int_U\frac 1{a_0(x)}|\nabla \Psi|^2 dx+CB_1
+ C\int_U W_1(x)|\nabla \Psi|^{2-a} dx
+C\Big( \int_U |\Psi_t|^{2} \phi dx\Big)^{\frac {2-a}{2(1-a)}}.
\end{multline*}
Thus we obtain \eqref{dpwG0}.
\end{proof}

For the sake of future estimates' simplicity, we  replace $B_1$ in \eqref{G0} by
\beq\label{Bstar}
B_*=\max\{B_1,1\}.
\eeq
Thus, \eqref{dpwG0} gives
\beq\label{dpwG}
\frac{d}{dt} \int_U \bar{p}^2(x,t) \phi(x)  dx+\int_UK(x,|\nabla p(x,t)|){|\nabla p(x,t)|}^2 dx \le C G(t)  ,
\eeq
where
\begin{multline}\label{G}
 G(t)=G[\Psi](t)\eqdef B_*+\int_U a_0(x)^{-1}|\nabla \Psi(x,t)|^2 dx+ \int_U W_1(x)|\nabla \Psi(x,t)|^{2-a} dx\\
+\Big( \int_U |\Psi_t(x,t)|^{2} \phi(x) dx\Big)^{\frac {2-a}{2(1-a)}}.
\end{multline}

With this change, we have 
\beq\label{Gge1}
G(t)\ge 1\quad\forall t\ge 0.
\eeq


Using \eqref{WKbar} and \eqref{dpwG}, we derive 
\begin{multline*}
\frac d{dt} \int_U \bar{p}^2 \phi dx + 2^{a-1}\int_UW_1(x)|\nabla\bar p|^{2-a}dx \\
\le \frac d{dt} \int_U \bar{p}^2 \phi dx + \int_UK(x,|\nabla p|){|\nabla p|}^2 dx+CB_*+ C\int_U W_1(x)|\nabla \Psi|^{2-a} dx\\
\le C G(t) +CB_*+ C\int_U W_1(x)|\nabla \Psi|^{2-a} dx
\le C G(t),
\end{multline*}
which, by setting $d_1=2^{a-1}$, proves 
\beq\label{withW1}
\frac d{dt} \int_U \bar{p}^2(x,t) \phi(x) dx + d_1\int_UW_1(x)|\nabla\bar p(x,t)|^{2-a}dx \le C G(t).
\eeq

Applying inequality  \eqref{Poincarenew} to $u=\bar{p}$ and utilizing it in  \eqref{withW1} give
\beq\label{difP}
\frac d{dt} \int_U \bar{p}^2(x,t)\phi(x) dx \le -d_2\Big(\int_U \bar{p}^2(x,t)\phi (x) dx \Big)^\frac{2-a}{2} + CG(t),
\eeq%
where $d_2=d_1 c_P^{a-2}$.

This nonlinear differential inequality enables us to obtain estimates for $\bar{p}$ in terms of initial and boundary data. 
They are described by the following function and numbers.

Let ${\mathcal M}(t)={\mathcal M}[\Psi](t)$ be  a continuous function on $[0,\infty)$ that satisfies
\beq\label{MG} 
{\mathcal M}(t) \text{ is increasing and } {\mathcal M}(t)\ge G(t) \ \forall t\ge 0.
\eeq

Denote
\beq\label{AB}
\mathcal A=\mathcal A[\Psi]\eqdef \limsup_{t\to\infty}G(t) \quad\text{and}\quad 
\mathcal B =\mathcal B[\Psi]\eqdef\limsup_{t\to\infty}[G'(t)]^-.
\eeq

Note from \eqref{Gge1} that 
\beq \label{MA}
{\mathcal M}(t)\ge 1\quad\forall t\ge 0, \quad\text{and }\mathcal A\ge 1.
\eeq

\medskip
\noindent \textbf{Assumptions.} Throughout the paper, we assume that each solution  $p(x,t)$ and its corresponding function $\Psi(x,t)$  have enough regularity in spatial and time variables such that all calculations are carried out legitimately.
Also, the time dependent quantities such as the above $G(t)$ and others introduced in subsequent sections are required to belong to $C([0,\infty))$, and when needed,  $C^1((0,\infty))$. The purpose of this requirement is to allow application of Gronwall's inequality and other types of estimates in Appendix \ref{append}.

\begin{theorem}\label{thm32}
\
\begin{enumerate}
\item[{\rm (i)}] If $t>0$ then 
\beq\label{EstLtwo}
\int_U 
\bar{p}^2(x,t) \phi(x)dx \le \int_U \bar{p}^2(x,0)\phi(x) dx+ C{\mathcal M}(t)^{\frac 2{2-a}}.
\eeq

\item[{\rm (ii)}] If $\mathcal A<\infty$ then 
\beq \label{noinitial}
\limsup_{t\to\infty} \int_U \bar{p}^2(x,t) \phi(x)  dx \le C \mathcal A^{\frac 2{2-a}}.
\eeq

\item[\rm{(iii)}] If $\mathcal B<\infty$ then there is $T>0$ such that for all $t>T$ 
\beq\label{EstLtwo7}
\int_U 
\bar{p}^2(x,t) \phi(x)dx \le C(\mathcal B^{\frac 1{1-a}}+ G(t)^{\frac 2{2-a}}).
\eeq
\end{enumerate}
\end{theorem}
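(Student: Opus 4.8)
The plan is to analyze the scalar nonlinear ODE for $y(t) \eqdef \int_U \bar p^2(x,t)\phi(x)\,dx$ implied by the differential inequality \eqref{difP}, namely
\beqs
y'(t) \le -d_2\, y(t)^{\frac{2-a}{2}} + CG(t),
\eeqs
where $0 < \frac{2-a}{2} < 1$ since $a\in(0,1)$. Each of the three assertions is a standard consequence of such a sublinear-dissipation inequality, and I expect the bulk of the work to be bookkeeping that invokes the Gronwall-type lemmas promised in Appendix \ref{append}. The one genuine subtlety is that the exponent $\frac{2-a}{2}$ on the dissipative term is \emph{less} than $1$, so the decay mechanism is weaker than exponential; this is exactly why the bounds in (i)--(iii) carry the fractional powers $\frac{2}{2-a}$ and $\frac{1}{1-a}$ rather than the naive ones, and getting those exponents to come out correctly will be the main thing to be careful about.

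For part (i), I would first bound the forcing term crudely by its increasing majorant: $G(t) \le \mathcal M(t)$, so $y'(t) \le -d_2 y(t)^{\frac{2-a}{2}} + C\mathcal M(t)$. The idea is a comparison/bootstrap argument: either $y(t)$ is already below the ``equilibrium'' level $y_*(t) \eqdef (C\mathcal M(t)/d_2)^{\frac{2}{2-a}}$, in which case we are done since $\mathcal M$ is increasing and $y_*(t) \le C\mathcal M(t)^{\frac{2}{2-a}}$; or $y(t)$ exceeds this level, in which case $y'(t) < 0$, so $y$ is decreasing at such times, and one propagates the bound forward from $t=0$. Tracking both cases carefully (the standard trick is to look at $\sup_{[0,t]} y$, or to directly apply a uniform Gronwall lemma for $y' \le -k y^\theta + f$ with $\theta<1$) yields $y(t) \le y(0) + C\mathcal M(t)^{\frac{2}{2-a}}$, which is \eqref{EstLtwo}. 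I expect the relevant lemma is stated verbatim in the appendix, so this reduces to checking hypotheses.

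For part (ii), with $\mathcal A = \limsup_{t\to\infty} G(t) < \infty$, fix any $\varepsilon>0$ and choose $T_\varepsilon$ so that $G(t) \le \mathcal A + \varepsilon$ for $t \ge T_\varepsilon$. On $[T_\varepsilon,\infty)$ the inequality becomes autonomous, $y' \le -d_2 y^{\frac{2-a}{2}} + C(\mathcal A+\varepsilon)$, and the standard fact that solutions of such an autonomous inequality satisfy $\limsup_{t\to\infty} y(t) \le \big(C(\mathcal A+\varepsilon)/d_2\big)^{\frac{2}{2-a}}$ gives $\limsup_{t\to\infty} y(t) \le C(\mathcal A+\varepsilon)^{\frac{2}{2-a}}$; letting $\varepsilon\to0$ yields \eqref{noinitial}. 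For part (iii), the point of introducing $\mathcal B = \limsup_{t\to\infty}[G'(t)]^-$ is to control how fast the forcing can drop, which lets one compare $y(t)$ to $G(t)$ itself rather than to a crude bound: when $\mathcal B<\infty$, pick $T$ with $[G'(t)]^- \le \mathcal B + 1$ for $t>T$, and apply the appropriate appendix lemma for inequalities of the form $y' \le -d_2 y^{\frac{2-a}{2}} + CG$ with $G$ not growing too fast in the downward direction; this produces $y(t) \le C\big(\mathcal B^{\frac{1}{1-a}} + G(t)^{\frac{2}{2-a}}\big)$ for $t>T$, which is \eqref{EstLtwo7}. The exponent $\frac{1}{1-a}$ here arises from balancing $G'$ against the dissipation rate, and confirming that the appendix's hypotheses match (in particular the continuity/differentiability assumptions flagged in the Assumptions paragraph) is the last thing to verify.
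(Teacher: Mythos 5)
Your proposal is correct and follows essentially the same route as the paper: both reduce \eqref{difP} to the scalar inequality $y'(t)\le -d_2\,y(t)^{\frac{2-a}{2}}+CG(t)$ for $y(t)=\int_U\bar p^2\phi\,dx$ and then invoke the nonlinear Gronwall-type results of Appendix \ref{append} (Lemma \ref{ODE3-lem}(i)--(ii) for parts (i)--(ii), and Lemma \ref{supinf} for part (iii), after replacing $\varphi(z)=C_0z^{\frac{2}{2-a}}$ by $\varphi_0(z)=C_0(z+z^{\gamma})$ with $\gamma=\frac{2}{2-a}$ and absorbing the constant via $G\ge 1$). Your comparison/bootstrap sketch for (i) and the $\varepsilon$-argument for (ii) are in effect re-proofs of those cited lemmas, and your exponent $\frac{1}{1-a}=\frac{\gamma}{2-\gamma}$ in (iii) matches the paper's.
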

\begin{proof}
(i) Define $y(t)=\int_U\bar{p}^2(x,t)\phi(x) dx$. We rewrite \eqref{difP} as
\beq \label{dif3}
y'(t)\le -\varphi^{-1}(y(t))+CG(t),
\eeq 
where $\varphi(z)=C_0z^\frac2{2-a}$ with $C_0=d_2^{-\frac2{2-a}}$.
Using nonlinear Gronwall's inequality in Lemma \ref{ODE3-lem}(i), we have for all $t\ge 0$ 
\beqs
y(t)\le y(0) + \varphi(C{\mathcal M}(t)),
\eeqs
hence obtaining  \eqref{EstLtwo}.

%

(ii) Applying  Lemma \ref{ODE3-lem}(ii) to the differential inequality \eqref{dif3}, we get
\beqs
\limsup_{t\to\infty} y(t)
\le C \limsup_{t\to\infty} G(t)^\frac2{2-a}
=C \mathcal A^{\frac 2{2-a}},
\eeqs
which proves  \eqref{noinitial}.

(iii) Finally,  note that $\varphi(z)\le \varphi_0(z)\eqdef C_0(z+z^\gamma)$ for  $z\ge 0$, where  $1<\gamma=\frac{2}{2-a}<2$. 

Clearly, $\varphi_0^{-1}(y)\le \varphi^{-1}(y)$, then we have  from \eqref{dif3} that
$$ y'(t)\le-\varphi_0^{-1}(y(t))+CG(t).$$
 Hence by Lemma \ref{supinf}, there is $T>0$ such that for all $t>T$
\beqs
y(t)\le C(1+\mathcal B^\frac{\gamma}{2-\gamma}+G(t)^\gamma),
\eeqs
which, together with \eqref{Gge1},  yields \eqref{EstLtwo7}.
\end{proof}

 
\section{Estimates for the pressure's gradient}\label{GradSec}
In this section we estimate the weighted $L^{2-a}_{W_1}$-norm for the gradient of $p$.
Due to the structure of equation \eqref{pbareqn}, we start with estimates for  $H(x,|\nabla p(x,t)|)$ defined by \eqref{H11}, and will use relation \eqref{H22} to derive the ones desired.
We define 
\beq\label{G8}
G_1(t)=G_1[\Psi](t)\eqdef \int_U a_0(x)^{-1} |\nabla \Psi_t(x,t)|^2 dx .
\eeq

\begin{theorem}\label{graddest1}
\
\begin{enumerate}
\item[\rm (i)] For $t>0$,
\begin{multline}\label{gradpW111}
\int_U H(x,|\nabla p(x,t)|)dx \le e^{-\frac14t} \int_U H(x,|\nabla p(x,0)|)dx\\
+ C\Big(\int_U \bar{p}^2(x,0)\phi(x)  dx+\mathcal M^\frac2{2-a}(t) +\int_0^t e^{-\frac14(t-\tau)}G_1(\tau)  d\tau\Big).
\end{multline}

\item[\rm (ii)] If $\mathcal A<\infty$ then 
\begin{equation}\label{gradpWone}
\limsup_{t\to\infty}\int_U H(x,|\nabla p(x,t)|)dx  \le C\Big(\mathcal A^\frac 2{2-a} + \limsup_{t\to\infty} G_1(t)\Big).
\end{equation}
\end{enumerate}
\end{theorem}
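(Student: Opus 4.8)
\medskip
\noindent\textbf{Proof proposal.} Write $h(t)=\int_U H(x,|\nabla p(x,t)|)\,dx$ and $y(t)=\int_U\bar p^2(x,t)\phi(x)\,dx$. The plan is to combine two energy identities and then run a linear Gronwall argument. For the first identity I would multiply the PDE in \eqref{pbareqn} by $\bar p_t$, integrate over $U$, and integrate by parts, noting that the boundary term vanishes because $\bar p_t=0$ on $\Gamma$; using the chain rule $\partial_t H(x,|\nabla p|)=2K(x,|\nabla p|)\,\nabla p\cdot\nabla p_t$ (cf.~\cite{ABHI1}) and $\nabla\bar p_t=\nabla p_t-\nabla\Psi_t$, one obtains
\beqs
\tfrac12\,h'(t)+\int_U\phi\,\bar p_t^2\,dx=\int_U K(x,|\nabla p|)\,\nabla p\cdot\nabla\Psi_t\,dx-\int_U\phi\,\Psi_t\,\bar p_t\,dx.
\eeqs
The second identity is \eqref{dpsquare2} itself, namely $\tfrac12\,y'(t)+\int_U K(x,|\nabla p|)|\nabla p|^2\,dx=\int_U K(x,|\nabla p|)\,\nabla p\cdot\nabla\Psi\,dx-\int_U\phi\,\Psi_t\,\bar p\,dx$.

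Adding the two identities, I would estimate the four terms on the right. For the two $\nabla\Psi$-integrals I would apply the Cauchy--Schwarz inequality with respect to the measure $K\,dx$, then Young's inequality, using $K(x,\xi)\le a_0(x)^{-1}$ from \eqref{decK}; this contributes $\tfrac14\int_U K|\nabla p|^2\,dx$ together with constant multiples of $\int_U a_0(x)^{-1}|\nabla\Psi|^2\,dx\le G(t)$ and of $\int_U a_0(x)^{-1}|\nabla\Psi_t|^2\,dx=G_1(t)$ (recall \eqref{G8}). For the two $\Psi_t$-integrals, Young's inequality contributes $\tfrac12\int_U\phi\,\bar p_t^2\,dx+\tfrac12\,y(t)$ plus a multiple of $\int_U\phi\,\Psi_t^2\,dx$, and the last quantity is $\le G(t)$ by \eqref{ee5} (valid since $\frac{2-a}{2(1-a)}\ge1$) and \eqref{Gge1}. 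After moving $\tfrac14\int_U K|\nabla p|^2\,dx$ and $\tfrac12\int_U\phi\,\bar p_t^2\,dx$ to the left, discarding the leftover nonnegative $\int_U\phi\,\bar p_t^2\,dx$, and using $K(x,\xi)\xi^2\ge\tfrac12 H(x,\xi)$ from \eqref{H33} to convert the surviving $\int_U K|\nabla p|^2\,dx$ into a coercive term, one is led to a linear differential inequality of the form
\beqs
Z'(t)+\tfrac14\,Z(t)\le C\bigl(y(t)+G(t)+G_1(t)\bigr),\qquad Z(t):=h(t)+y(t),
\eeqs
the precise decay constant being fixed by the parameters chosen in Young's inequality (any rate at least $\tfrac14$ suffices, since $e^{-rt}\le e^{-t/4}$ when $r\ge\tfrac14$).

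For part (i), Gronwall's inequality then gives $Z(t)\le e^{-t/4}Z(0)+C\int_0^t e^{-(t-\tau)/4}\bigl(y(\tau)+G(\tau)+G_1(\tau)\bigr)\,d\tau$. By Theorem \ref{thm32}(i) and the monotonicity of $\mathcal M$, one has $y(\tau)\le y(0)+C\,\mathcal M(\tau)^{2/(2-a)}\le y(0)+C\,\mathcal M(t)^{2/(2-a)}$ for $\tau\le t$, and $\int_0^t e^{-(t-\tau)/4}G(\tau)\,d\tau\le 4\,\mathcal M(t)\le 4\,\mathcal M(t)^{2/(2-a)}$ by \eqref{MG} and \eqref{MA}; keeping the $G_1$-integral unchanged and using $h(t)\le Z(t)$ and $Z(0)=h(0)+y(0)$, this is exactly \eqref{gradpW111}. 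For part (ii), applying the asymptotic version of the same Gronwall lemma (Appendix \ref{append}) to the inequality for $Z$ yields $\limsup_{t\to\infty}Z(t)\le 4C\limsup_{t\to\infty}\bigl(y(t)+G(t)+G_1(t)\bigr)$; since $\limsup_{t\to\infty}y\le C\,\mathcal A^{2/(2-a)}$ by Theorem \ref{thm32}(ii), $\limsup_{t\to\infty}G=\mathcal A$ by \eqref{AB}, and $\mathcal A\ge1$ so that $\mathcal A\le\mathcal A^{2/(2-a)}$, the bound \eqref{gradpWone} follows from $h\le Z$.

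The main obstacle is producing the coercive term in the differential inequality for $Z$. The $\bar p_t$-identity by itself carries no quantity comparable to $h(t)$ --- only $-\int_U\phi\,\bar p_t^2\,dx$, which bounds neither $h$ nor $y$ from below --- so the dissipation must be imported from \eqref{dpsquare2}, where $\int_U K|\nabla p|^2\,dx\ge\tfrac12\,h(t)$ supplies it. The subtle point is then the cross-term $\int_U\phi\,\Psi_t\,\bar p\,dx$ in \eqref{dpsquare2}: bounding it necessarily leaves a multiple of $y(t)$, which cannot be absorbed back into a gradient quantity because the two-weight Poincar\'e--Sobolev inequality \eqref{Poincarenew} has exponent $2-a<2$ and therefore controls $y$ only superlinearly by the gradient energy. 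The remedy is simply not to resist this: carry $y$ along inside $Z$ and dispose of $\int_0^t e^{-(t-\tau)/4}y(\tau)\,d\tau$ at the end through the already-proved Theorem \ref{thm32}. The legitimacy of the identity $\partial_t H(x,|\nabla p|)=2K\,\nabla p\cdot\nabla p_t$ and of differentiating $\int_U H\,dx$ under the integral sign is routine and covered by the standing regularity assumption on $p$.
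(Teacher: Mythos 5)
Your proposal is correct and follows essentially the same route as the paper: test the equation with $\bar p_t$ to produce $\frac{d}{dt}\int_U H\,dx$, import the coercive term from the $\bar p$-tested identity via \eqref{H33}, obtain a linear differential inequality with decay rate $\tfrac14$, apply Gronwall (and Lemma \ref{ODE3-lem}(ii) for the limit), and dispose of the $\int_U\bar p^2\phi\,dx$ term using Theorem \ref{thm32}. The only cosmetic difference is that you carry $y$ inside the Gronwall quantity $Z=h+y$, whereas the paper eliminates $y'$ by the lower bound $\frac{d}{dt}\int_U\bar p^2\phi\,dx\ge-\frac12\int_U\bar p_t^2\phi\,dx-2\int_U\bar p^2\phi\,dx$ and runs Gronwall on $h$ alone; both yield \eqref{gradpW111} and \eqref{gradpWone}.
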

\begin{proof}
(i) Multiplying  equation \eqref{pbareqn} by $\bar p_t$, integrating over $U$,  and using integration by parts we have
\begin{align*}
\int_U \bar p_t^2\phi dx &=-\int_U K(x,|\nabla p|)\nabla p\cdot \nabla \bar p_t dx -\int_U  \bar p_t  \Psi_t \phi dx\\
 &=  - \int_U K(x,|\nabla p|)\nabla p\cdot \nabla  p_t dx
+ \int_U K(x,|\nabla p|)\nabla p\cdot \nabla \Psi_t dx-\int_U  \bar p_t \Psi_t \phi dx\\
&=  - \frac 12 \frac d{dt}\int_U H(x,| \nabla p|)dx
+ \int_U K(x,|\nabla p|)\nabla p\cdot \nabla \Psi_t dx-\int_U  \bar p_t \Psi_t \phi dx.
\end{align*}
Let $\varepsilon>0$. Applying Cauchy's inequality, we derive 
\begin{multline*}
\int_U \bar p_t^2 \phi dx + \frac12\frac{d}{dt} \int_U H(x,|\nabla p|) dx
\le\varepsilon \int_U K(x,|\nabla p|)|\nabla p|^2 dx+\frac 1{4\varepsilon} \int_U K(x,|\nabla p|) |\nabla \Psi_t|^2 dx\\
 + \frac 12\int_U \bar p_t^2 \phi dx +\frac12\int_U |\Psi_t|^2 \phi dx.
\end{multline*}
By using \eqref{H33} to estimate the first term on the right-hand side, and using \eqref{decK}  to estimate the second term on the right-hand side, we obtain 
\begin{multline*}
\frac 12\int_U \bar p_t^2 \phi dx + \frac12\frac{d}{dt} \int_U H(x,|\nabla p|) dx\\
\le\varepsilon \int_U H(x,|\nabla p|)dx +\frac 1{4\varepsilon} \int_U a_0(x)^{-1} |\nabla \Psi_t|^2 dx + \frac12\int_U |\Psi_t|^2 \phi  dx.
\end{multline*}
Thus,
\beq\label{H1}
\int_U \bar p_t^2\phi dx + \frac{d}{dt} \int_U H(x,|\nabla p|) dx
\le  2\varepsilon \int_U H(x,|\nabla p|)dx + \frac1{2\varepsilon} G_1(t)+G(t).
\eeq
From  \eqref{dpwG} and \eqref{H33}, we have
\begin{equation}\label{H2}
\frac d{dt} \int_U \bar{p}^2\phi dx + \frac{1}2\int_U H(x,|\nabla p|) dx \le C G(t).
\end{equation}
Combining \eqref{H1} and \eqref{H2} with  $\varepsilon=1/8$, we obtain
\begin{multline}\label{ptrowest}
\frac d{dt}\int_U \bar{p}^2\phi dx +\frac d{dt} \int_U H(x,|\nabla p(x,t)|)dx 
+ \int_U \bar p_t^2 \phi dx  + \frac14\int_U H(x,|\nabla p(x,t)|)dx\\
\le   C (G(t)+G_1(t)).
\end{multline}
We rewrite the first term on the left-hand side and apply Cauchy's inequality as follows 
\begin{equation*}
\frac d{dt} \int_U \bar{p}^2\phi dx=2\int_U \bar{p}\bar{p}_t \phi dx
\ge - \frac12 \int_U \bar p_t^2 \phi dx-2\int_U \bar{p}^2\phi dx,
\end{equation*}
hence
\begin{equation}\label{dH0}
\frac d{dt} \int_U H(x,|\nabla p(x,t)|)dx+   \frac12\int_U\bar p_t^2\phi dx+ \frac14\int_U H(x,|\nabla p(x,t)|)dx
\le 2 \int_U \bar{p}^2\phi dx+  C (G(t)+ G_1(t)).
\end{equation}
Particularly, neglecting the second integral of the left-hand side reduces \eqref{dH0} to
\begin{equation}\label{dH}
\frac d{dt} \int_U H(x,|\nabla p(x,t)|)dx+ \frac14  \int_U H(x,|\nabla p(x,t)|)dx
\le 2 \int_U \bar{p}^2\phi dx+  C (G(t)+ G_1(t)).
\end{equation}

Using \eqref{EstLtwo} to estimate the integral term on the right-hand side, and then properties \eqref{MG}, \eqref{MA}, we have 
\begin{align*}
& \frac d{dt} \int_U H(x,|\nabla p(x,t)|)dx  +\frac14 \int_U H(x,|\nabla p(x,t)|)dx\\
& \le  2 \int_U \bar{p}^2(x,0) \phi dx+ C{\mathcal M}(t)^{\frac 2{2-a}}+C (G(t)+ G_1(t))
 \le  C \int_U \bar{p}^2(x,0) \phi dx+ C{\mathcal M}(t)^{\frac 2{2-a}}+C G_1(t).
\end{align*}

Consequently, by Gronwall's inequality,
\begin{multline}\label{nop0}
 \int_U H(x,|\nabla p(x,t)|)dx \le e^{-\frac14 t} \int_U H(x,|\nabla p(x,0)|)dx\\
+ C\int_0^t e^{-\frac14(t-\tau)}\Big(\int_U \bar{p}^2(x,0) \phi dx+ {\mathcal M}(\tau)^{\frac 2{2-a}} +   G_1(\tau)\Big) d\tau.
\end{multline}
Since ${\mathcal M}(\tau)\le {\mathcal M}(t)$ for all $\tau\in[0,t]$, estimate \eqref{gradpW111}  follows \eqref{nop0}.

(ii) Applying Lemma \ref{ODE3-lem}(ii) to differential inequality \eqref{dH}, and using limit estimate \eqref{noinitial}, we have
\begin{align*}
\limsup_{t\to\infty} \int_U H(x,|\nabla p(x,t)|)dx  
&\le  C \limsup_{t\to\infty}  \int_U \bar{p}^2\phi dx+  C \limsup_{t\to\infty}  (G(t)+G_1(t))\\
&\le  C(\mathcal A^\frac2{2-a}  + \mathcal A + \limsup_{t\to\infty}  G_1(t)).
\end{align*}
Since $\mathcal A\ge 1$, by \eqref{MA}, we obtain \eqref{gradpWone}.
\end{proof}

For large time, we improve the  estimates in Theorem \ref{graddest1} by establishing inequalities of uniform Gronwall-type  \cite{TemamDynBook,SellYouBook}.

\begin{lemma}\label{lemHuni}
For $t\ge 1$,
\begin{multline}\label{Gtwo}
\int_U H(x,|\nabla p(x,t)|)dx+\frac{1}{2}\int_{t-\frac 12}^t\int_U \bar{p}_t^2(x,\tau)\phi(x) dxd\tau\\
\le C\Big(\int_U \bar{p}^2(x,t-1)\phi(x) dx+\int_{t-1}^t (G(\tau)+G_1(\tau))d\tau\Big).
\end{multline}
\end{lemma}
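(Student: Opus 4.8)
The plan is to start from the differential inequality \eqref{dH0}, which, upon discarding nothing, already has the form
\[
\frac{d}{dt}\, h(t) + \frac12 \int_U \bar p_t^2 \phi\, dx + \frac14\, h(t) \le 2\int_U \bar p^2 \phi\, dx + C\bigl(G(t)+G_1(t)\bigr),
\]
where I abbreviate $h(t)=\int_U H(x,|\nabla p(x,t)|)\,dx$. The key point is that, unlike in Theorem \ref{graddest1}, I do not want to absorb $\int_U\bar p^2\phi\,dx$ via the crude bound \eqref{EstLtwo} (which brings in ${\mathcal M}(t)$ and hence a global-in-time quantity and the initial gradient). Instead I keep it on the right and treat \eqref{dH0} as an inequality of the form $h' + \tfrac14 h \le F(t)$ with a \emph{locally integrable} forcing term $F(t) = 2\int_U\bar p^2\phi\,dx + C(G+G_1)$, plus the extra nonnegative term $\tfrac12\int_U\bar p_t^2\phi\,dx$ on the left which I will want to retain for the second piece of \eqref{Gtwo}.

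First I would control $\int_{t-1}^t F(\tau)\,d\tau$ on a unit time window. The $\int_{t-1}^t(G+G_1)\,d\tau$ part is already in the desired right-hand side of \eqref{Gtwo}, so the work is to bound $\int_{t-1}^t\int_U\bar p^2(x,\tau)\phi(x)\,dx\,d\tau$ by $C\bigl(\int_U\bar p^2(x,t-1)\phi\,dx + \int_{t-1}^t G(\tau)\,d\tau\bigr)$. For this I integrate the basic energy inequality \eqref{withW1} (or already \eqref{difP}) over $[t-1,s]$ for $s\in[t-1,t]$: since $W_1\ge 0$, \eqref{withW1} gives $\frac{d}{ds}\int_U\bar p^2\phi\,dx \le CG(s)$, so $\int_U\bar p^2(x,s)\phi\,dx \le \int_U\bar p^2(x,t-1)\phi\,dx + C\int_{t-1}^t G(\tau)\,d\tau$ for every $s\in[t-1,t]$; integrating this in $s$ over a window of length $1$ yields exactly the bound I want. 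This is the main idea and it is not hard — it is the standard uniform-Gronwall bootstrap.

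Next, with $F$ now known to satisfy $\int_{t-1}^t F(\tau)\,d\tau \le C\bigl(\int_U\bar p^2(x,t-1)\phi\,dx + \int_{t-1}^t(G+G_1)\,d\tau\bigr)=:C\Theta(t)$, I apply the uniform Gronwall lemma from Appendix \ref{append} (a lemma of the type in \cite{TemamDynBook,SellYouBook}) to $h'+\tfrac14 h\le F$: over the window $[t-1,t]$ one gets $h(t)\le e^{-1/4}h(s) + \int_{t-1}^t F(\tau)\,d\tau$ for $s\in[t-1,t]$, and then averaging in $s$ over, say, $[t-1,t-\tfrac12]$ and using $\int_{t-1}^{t-1/2}h(s)\,ds \le \int_{t-1}^t h(s)\,ds$ bounded through \eqref{H22} (or through \eqref{dH} integrated) by $C\Theta(t)$, I obtain $h(t)\le C\Theta(t)$, which is the first term on the left of \eqref{Gtwo}. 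Finally, to get the $\tfrac12\int_{t-1/2}^t\int_U\bar p_t^2\phi\,dx\,d\tau$ term, I integrate \eqref{dH0} over $[t-\tfrac12,t]$, keeping the $\tfrac12\int_U\bar p_t^2\phi\,dx$ term: this gives $\frac12\int_{t-1/2}^t\int_U\bar p_t^2\phi\,dx\,d\tau \le h(t-\tfrac12) + \int_{t-1/2}^t F(\tau)\,d\tau \le C\Theta(t)$, using the bound on $h(t-\tfrac12)$ already established (shifting the window by $\tfrac12$, which only changes constants). Adding the two pieces gives \eqref{Gtwo}.

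The step I expect to be the main (minor) obstacle is the bookkeeping of the overlapping time windows: the argument for $h(t)$ needs $h$ averaged over a subinterval, which in turn needs \eqref{dH} integrated, which needs the $\int\bar p^2\phi$ bound on a slightly larger window, so one must be careful that every ``$t-1$'' really does sit inside the window on which the previous step was proved. All of this is harmless up to enlarging constants and replacing $1$ by a fixed fraction, but it should be spelled out so the chain of averaged estimates closes; the analytic content is entirely standard uniform-Gronwall, as in \cite{TemamDynBook,SellYouBook}.
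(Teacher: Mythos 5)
Your overall strategy is the paper's: a uniform-Gronwall argument over a unit time window built from \eqref{H1}--\eqref{H2}-type inequalities. The paper implements it slightly differently: it never needs a pointwise-in-$s$ bound on $\int_U\bar p^2(x,s)\phi\,dx$, because it works directly with \eqref{H1} (forcing $G+G_1$ only) and \eqref{H2}; integrating \eqref{H2} over $[t-1,t]$ gives the key bound $\int_{t-1}^t\int_U H\,dx\,d\tau\le 2\int_U\bar p^2(x,t-1)\phi\,dx+C\int_{t-1}^tG\,d\tau$ (this is \eqref{Gone}), and then integrating \eqref{H1} from $s$ to $t$ and again in $s$ over $[t-1,t]$ produces both terms of \eqref{Gtwo} at once, the $\bar p_t^2$ piece coming from restricting the resulting triple integral to $s\in[t-1,t-\frac12]$, $\tau\in[t-\frac12,t]$. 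Your variant, which takes \eqref{dH0} as the master inequality and handles the extra forcing $\int_U\bar p^2\phi\,dx$ by integrating \eqref{withW1} over the window, is also viable and leads to the same estimate after the second pass for the $\bar p_t^2$ term.

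Two points in your write-up need repair, though neither is fatal. First, the bound $\int_{t-1}^t\int_U H(x,|\nabla p(x,s)|)\,dx\,ds\le C\Theta(t)$ is \emph{not} obtained ``through \eqref{H22}'' nor ``through \eqref{dH} integrated'': \eqref{H22} bounds $H$ above by $2W_2|\nabla p|^{2-a}$, and no established inequality controls that quantity locally in time (the energy inequality \eqref{dpwG} controls $\int_U K|\nabla p|^2$, i.e.\ $H$ via \eqref{H33}, and \eqref{withW1} controls $W_1|\nabla\bar p|^{2-a}$, with $W_1$, not $W_2$, and $\nabla\bar p$, not $\nabla p$); and integrating \eqref{dH} over $[t-1,t]$ leaves the pointwise term $\int_U H(x,|\nabla p(x,t-1)|)\,dx$ on the right, which is exactly what you are trying to bound --- circular. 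The correct one-line fix is to integrate \eqref{H2} over $[t-1,t]$, which is precisely \eqref{Gone}. Second, in the last step you cannot simply ``shift the window by $\tfrac12$'' to bound $h(t-\tfrac12)$, since that would produce $\int_U\bar p^2(x,t-\tfrac32)\phi\,dx$ and $\int_{t-\frac32}^{t-\frac12}(G+G_1)\,d\tau$, not the quantities in \eqref{Gtwo}; instead run the averaging for $h(t-\tfrac12)$ over a subinterval of $[t-1,t-\tfrac12]$ (e.g.\ $s\in[t-1,t-\tfrac34]$), so that every window stays inside $[t-1,t]$ and only the constants change --- which is the bookkeeping you flagged, and it does close.
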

\begin{proof}
The proof follows \cite{HI2} by using basic differential inequalities \eqref{H1} and \eqref{H2}.
 
Integrating \eqref{H2}  from $t-1$ to $t$ yields 
\begin{align*}
\int_U\bar{p}^2(x,t)\phi dx+ \frac12\int_{t-1}^t\int_UH(x,|\nabla p(x,\tau)|)dxd\tau\le \int_U\bar{p}^2(x,t-1)\phi dx+C\int_{t-1}^tG(\tau)d\tau.
\end{align*}
Neglecting first term on the left-hand side, we have 
\begin{equation}\label{Gone}
\int_{t-1}^t \int_U H(x,|\nabla p(x,\tau)|)dxd\tau \le 2\int_U \bar{p}^2(x,t-1)\phi dx+C\int_{t-1}^t G(\tau)d\tau.
\end{equation}
Using \eqref{H1} with $\varepsilon =1/2$ gives 
\beqs
\int_U \bar p_t^2(x,t) \phi dx+  \frac{d}{dt} \int_U H(x,|\nabla p(x,t)|) dx\le \int_U H(x,|\nabla p(x,t)|)dx + C(G(t)+G_1(t)).
\eeqs
Let $s\in[t-1, t]$. Integrating the previous inequality in time from $s$ to $t$, we obtain 
\begin{align*}
&\int_s^t\int_U \bar p_t^2(x,\tau)\phi dxd\tau +   \int_U H(x,|\nabla p(x,t)|) dx\\
&\le\int_U H(x,|\nabla p(x,s)|) dx+ \int_s^t\int_U H(x,|\nabla p(x,\tau)|)dxd\tau + C\int_s^t(G(\tau)+G_1(\tau))d\tau\\
&\le \int_U H(x,|\nabla p(x,s)|) dx+ \int_{t-1}^t\int_U H(x,|\nabla p(x,\tau)|)dxd\tau + C\int_{t-1}^t (G(\tau)+G_1(\tau))d\tau.
\end{align*}
Integrating the last  inequality in $s$ from $t-1$ to $t$ results in 
\begin{multline*}
\int_{t-1}^t\int_s^t\int_U \bar p_t^2(x,\tau)\phi dxd\tau ds + \int_U H(x,|\nabla p(x,t)|) dx\\
\le 2\int_{t-1}^t\int_U H(x,|\nabla p(x,\tau)|)dxd\tau  +C\int_{t-1}^t (G(\tau)+G_1(\tau))d\tau .
\end{multline*}
Using \eqref{Gone} to estimate the first term on the right-hand side, we have 
\begin{multline}\label{ptHtmin1}
\int_{t-1}^t\int_s^t\int_U \bar p_t^2(x,\tau)\phi dxd\tau ds + \int_U H(x,|\nabla p(x,t)|) dx\\
\le 4\int_U \bar{p}^2(x,t-1)\phi dx+C\int_{t-1}^t (G(\tau)+G_1(\tau))d\tau.
\end{multline}
For the first integral on the left-hand side, we observe that 
\beqs
\int_{t-1}^t\int_s^t\int_U \bar p_t^2(x,\tau)\phi dxd\tau ds
\ge \int_{t-1}^{t-1/2}  \int_{t-\frac 12}^t \int_U \bar p_t^2(x,\tau)\phi dx  d\tau ds
= \frac 12 \int_{t-\frac 12}^t\int_U \bar p_t^2(x,\tau)\phi dxd\tau.
\eeqs
Utilizing this estimate in \eqref{ptHtmin1}, we obtain inequality \eqref{Gtwo}.
\end{proof}

Combining Lemma \ref{lemHuni} with Theorem \ref{thm32} results in the following specific estimates.

\begin{theorem}\label{theo33}
\
\begin{enumerate}
\item[\rm(i)] If $t\ge 1$ then 
\beq\label{newgrad}
\int_U H(x,|\nabla p(x,t)|)dx\le C\Big(\int_U \bar{p}^2(x,0)\phi(x) dx+{\mathcal M}(t)^\frac{2}{2-a} + \int_{t-1}^t G_1(\tau)d\tau\Big).
\eeq

\item[\rm(ii)] If $\mathcal A<\infty$ then 
\beq\label{Gtwo5}
 \limsup_{t\to \infty} \int_U H(x,|\nabla p(x,t)|)dx\le   C\Big(\mathcal A^{\frac 2{2-a}}+  \limsup_{t\to \infty} \int_{t-1}^tG_1(\tau) d\tau\Big).
\eeq

\item[\rm(iii)] If $\mathcal B<\infty$  then there is $T>1$ such that  for all $t>T$,
\beq\label{Gtwo33}
\int_U H(x,|\nabla p(x,t)|)dx
\le  C\Big(\mathcal B^{\frac 1{1-a}}+ G(t)^{\frac 2{2-a}}+\int_{t-1}^t G_1(\tau)d\tau\Big).
\eeq
\end{enumerate}
\end{theorem}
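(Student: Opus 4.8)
The plan is to read off everything from the uniform estimate \eqref{Gtwo} of Lemma \ref{lemHuni}: for $t\ge 1$, discarding the nonnegative term $\tfrac12\int_{t-1/2}^t\int_U\bar p_t^2\phi\,dxd\tau$ on its left-hand side leaves
\[
\int_U H(x,|\nabla p(x,t)|)\,dx\le C\Big(\int_U \bar p^2(x,t-1)\phi(x)\,dx+\int_{t-1}^t\big(G(\tau)+G_1(\tau)\big)\,d\tau\Big).
\]
So the whole matter reduces to estimating the zeroth-order quantity $\int_U\bar p^2(x,t-1)\phi(x)\,dx$ and the average $\int_{t-1}^t G(\tau)\,d\tau$ in the three regimes of Theorem \ref{thm32}; the $G_1$ term is simply carried along.

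For part (i) I would apply Theorem \ref{thm32}(i) at time $t-1\ge 0$, then use that $\mathcal M$ is increasing to replace $\mathcal M(t-1)$ by $\mathcal M(t)$, and that $G(\tau)\le\mathcal M(\tau)\le\mathcal M(t)$ on $[t-1,t]$ together with $\mathcal M(t)\ge 1$ and $\tfrac{2}{2-a}>1$ to bound $\int_{t-1}^t G(\tau)\,d\tau\le\mathcal M(t)\le\mathcal M(t)^{2/(2-a)}$; collecting terms gives \eqref{newgrad}. For part (ii) I would take $\limsup_{t\to\infty}$ in the displayed inequality: Theorem \ref{thm32}(ii) bounds $\limsup_{t\to\infty}\int_U\bar p^2(x,t-1)\phi\,dx$ by $C\mathcal A^{2/(2-a)}$, while $\int_{t-1}^t G(\tau)\,d\tau\le\sup_{[t-1,t]}G$ forces $\limsup_{t\to\infty}\int_{t-1}^t G(\tau)\,d\tau\le\mathcal A\le\mathcal A^{2/(2-a)}$ since $\mathcal A\ge 1$; this yields \eqref{Gtwo5}. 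Both of these are routine bookkeeping once the reduction above is in hand.

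The substantive step, and the main obstacle, is part (iii). Theorem \ref{thm32}(iii) provides $T_0>0$ with $\int_U\bar p^2(x,s)\phi\,dx\le C(\mathcal B^{1/(1-a)}+G(s)^{2/(2-a)})$ for $s>T_0$, so at $s=t-1$ it controls the first right-hand term by $C(\mathcal B^{1/(1-a)}+G(t-1)^{2/(2-a)})$ once $t>T_0+1$. What remains is to trade the shifted values $G(t-1)$ and $\int_{t-1}^t G(\tau)\,d\tau$ for $G(t)$, and this is exactly where $\mathcal B=\limsup_{t\to\infty}[G'(t)]^-<\infty$ enters: for $t$ large one has $G'(s)\ge-(\mathcal B+1)$, hence $G(\tau)\le G(t)+(\mathcal B+1)$ for all $\tau\in[t-1,t]$, so enlarging $T$ gives $G(t-1)\le G(t)+\mathcal B+1$ and $\int_{t-1}^t G(\tau)\,d\tau\le G(t)+\mathcal B+1$. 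Then \eqref{ee3} splits $(G(t)+\mathcal B+1)^{2/(2-a)}$ into $G(t)^{2/(2-a)}$ plus $(\mathcal B+1)^{2/(2-a)}$ up to a constant, and since $0<a<1$ forces $\tfrac{2}{2-a}<\tfrac{1}{1-a}$, inequality \eqref{ee5} upgrades $(\mathcal B+1)^{2/(2-a)}$ to $1+(\mathcal B+1)^{1/(1-a)}$, which \eqref{ee3} bounds by a constant times $1+\mathcal B^{1/(1-a)}$. All stray constants are finally absorbed into $G(t)^{2/(2-a)}\ge 1$, producing \eqref{Gtwo33}. The only care needed here is the exponent comparison $\tfrac{2}{2-a}<\tfrac1{1-a}$ and keeping track that the fixed exponents depend only on $a=\deg(g)/(\deg(g)+1)$, so the implicit constants remain admissible.
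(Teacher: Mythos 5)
Your proposal is correct and follows essentially the same route as the paper: start from Lemma \ref{lemHuni} (inequality \eqref{Gtwo}), estimate $\int_U \bar p^2(x,t-1)\phi\,dx$ and $\int_{t-1}^t G(\tau)\,d\tau$ via Theorem \ref{thm32}(i)--(iii), monotonicity of $\mathcal M$, the bound \eqref{intlim}, and the shift estimate $G(\tau)\le G(t)+\mathcal B+1$ (which is exactly Lemma \ref{Btt}, re-derived inline in your part (iii)), finishing with \eqref{ee3}, \eqref{ee5} and $G(t)\ge 1$. No gaps; the exponent comparison $\tfrac{2}{2-a}<\tfrac{1}{1-a}$ you check is the same one implicit in the paper's manipulation.
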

\begin{proof}
(i) Combining \eqref{Gtwo}  with estimate \eqref{EstLtwo} and  property \eqref{MG} yields
\begin{align*}
\int_U H(x,|\nabla p(x,t)|)dx
&\le C\Big(\int_U \bar{p}^2(x,0)\phi(x) dx+ {\mathcal M}(t-1)^{\frac 2{2-a}}+\int_{t-1}^t (G(\tau)+G_1(\tau))d\tau\Big)\\
&\le C\Big(\int_U \bar{p}^2(x,0)\phi(x) dx+ {\mathcal M}(t)^{\frac 2{2-a}}+ {\mathcal M}(t)+\int_{t-1}^t G_1(\tau)d\tau\Big).
\end{align*}
Then using the fact ${\mathcal M}(t)\ge 1$ from \eqref{MA}, we obtain \eqref{newgrad}.

(ii) Taking limit superior of \eqref{Gtwo}, and using limit estimate  \eqref{noinitial},  we have
\begin{align*}
 \limsup_{t\to \infty} \int_U H(x,|\nabla p|)(x,t)dx
 &\le   C\limsup_{t\to\infty} G(t)^{\frac 2{2-a}}+ C \limsup_{t\to \infty} \int_{t-1}^t [G(\tau)+G_1(\tau)]d\tau.
\end{align*}
Note that 
\beq\label{intlim}
\limsup_{t\to \infty} \int_{t-1}^t G(\tau)d\tau \le \limsup_{t\to\infty} G(t).
\eeq
Then 
\begin{align*}
 \limsup_{t\to \infty} \int_U H(x,|\nabla p|)(x,t)dx
&\le C\Big(\mathcal A^\frac2{2-a}+\mathcal A+ \limsup_{t\to \infty} \int_{t-1}^tG_1(\tau)d\tau\Big).
\end{align*}

Estimate   \eqref{Gtwo5} then follows since $\mathcal A\ge 1$.

(iii) Using \eqref{EstLtwo7} to estimate the term $\int_U\bar{p}^2(x,t-1)\phi dx$ term in \eqref{Gtwo}, we obtain 
\beq\label{old}
\int_U H(x,|\nabla p(x,t)|)dx
\le C\Big(\mathcal B^{\frac 1{1-a}}+ G(t-1)^{\frac 2{2-a}}+\int_{t-1}^t (G(\tau)+G_1(\tau))d\tau\Big).
\eeq
Note from Lemma \ref{Btt} that
\beqs
G(\tau)\le G(t)+\mathcal B+1\quad \forall \tau\in[t-1,t].
\eeqs
Hence \eqref{old} implies
\beqs
\int_U H(x,|\nabla p(x,t)|)dx
\le C\Big(\mathcal B^{\frac 1{1-a}}+ (G(t)+\mathcal B+1)^{\frac 2{2-a}}+(G(t)+\mathcal B+1)+\int_{t-1}^t G_1(\tau)d\tau\Big).
\eeqs
Then inequality \eqref{Gtwo33} follows by using \eqref{ee3}, \eqref{ee5} and the fact $G(t)\ge 1$.
\end{proof}
\begin{remark}
(a) Compared to \eqref{gradpW111}, estimate \eqref{newgrad} does not require $\nabla p(x,0)$.
Also, \eqref{Gtwo5} improves \eqref{gradpWone} slightly, particularly when $G_1(t)$  fluctuates strongly in time. 
(b) The estimate \eqref{Gtwo33} is simpler than \eqref{old} which is the form usually presented in previous papers \cite{HI2,HKP1,HK1,HK2}.
\end{remark}

The statements in Theorems \ref{graddest1} and \ref{theo33} can be rewritten to give estimates for the integral
$\int_U W_1(x)|\nabla p(x,t)|^{2-a} dx $, that is, $\|\nabla p(t)\|_{L^{2-a}_{W_1}}^{2-a}$.

\begin{corollary}\label{cor35}
For $t>0$,
\begin{multline}\label{gradpW1}
\int_U W_1(x)|\nabla p(x,t)|^{2-a} dx \le e^{-\frac14t} \int_U H(x,|\nabla p(x,0)|)dx\\
+ C\Big(\int_U \bar{p}^2(x,0)\phi(x)  dx+\mathcal M^\frac2{2-a}(t) +\int_0^t e^{-\frac14(t-\tau)}G_1(\tau)  d\tau\Big).
\end{multline}

For $t\ge 1$,
\beq\label{C1}
 \int_U W_1(x)|\nabla p(x,t)|^{2-a} dx \le C\Big(\int_U \bar{p}^2(x,0)\phi(x) dx+{\mathcal M}(t)^\frac{2}{2-a} + \int_{t-1}^t G_1(\tau)d\tau\Big).
\eeq

If $\mathcal A<\infty$ then 
\begin{equation}\label{C3}
\begin{aligned}
 &\limsup_{t\to\infty}\int_U W_1(x)|\nabla p(x,t)|^{2-a} dx  \le C\Big(\mathcal A^\frac 2{2-a} + \limsup_{t\to\infty} G_1(t)\Big).
\end{aligned}
\end{equation}

If $\mathcal B<\infty$ then there is $T>1$ such that  for all $t>T$,
\beq\label{C4}
\int_U W_1(x)|\nabla p(x,t)|^{2-a} dx
\le  C\Big(\mathcal B^{\frac 1{1-a}}+ G(t)^{\frac 2{2-a}}+\int_{t-1}^t G_1(\tau)d\tau\Big).
\eeq
\end{corollary}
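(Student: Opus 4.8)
The plan is to deduce each of the four $W_1$-estimates directly from the corresponding $H$-estimate already proved in Theorems \ref{graddest1} and \ref{theo33}, using only the elementary pointwise comparison \eqref{H22} from Lemma \ref{LemKx}. Applying the left-hand inequality of \eqref{H22} with $\xi=|\nabla p(x,t)|$ and integrating over $U$ gives
\[
\int_U W_1(x)|\nabla p(x,t)|^{2-a}\,dx \le \int_U H(x,|\nabla p(x,t)|)\,dx+\frac12\int_U a_N(x)\,dx=\int_U H(x,|\nabla p(x,t)|)\,dx+\frac12 B_1,
\]
with $B_1$ as in \eqref{B1}. Thus each desired inequality follows from its $H$-counterpart, provided the extra additive constant $\tfrac12 B_1$ can be absorbed into the right-hand side that is already present.

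For the absorption I would use $B_1\le B_*=\max\{B_1,1\}$ together with the facts that $G(t)\ge B_*$ by \eqref{G}, that ${\mathcal M}(t)\ge G(t)$ by \eqref{MG}, and the normalizations $G(t)\ge 1$ in \eqref{Gge1}, ${\mathcal M}(t)\ge 1$ and $\mathcal A\ge 1$ in \eqref{MA}. Concretely: in \eqref{gradpW111} and \eqref{newgrad} the right-hand side already contains a term $C{\mathcal M}(t)^{2/(2-a)}$, and since ${\mathcal M}(t)\ge 1$ with $\tfrac{2}{2-a}>1$ by \eqref{eq9}, we get $\tfrac12 B_1\le\tfrac12 B_*\le\tfrac12{\mathcal M}(t)\le\tfrac12{\mathcal M}(t)^{2/(2-a)}$, which is swallowed by enlarging $C$; this yields \eqref{gradpW1} and \eqref{C1}. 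For the asymptotic bound, take $\limsup_{t\to\infty}$ of the integrated comparison and invoke \eqref{gradpWone} (or \eqref{Gtwo5}): the constant $\tfrac12 B_1\le\tfrac12 B_*\le C\le C\mathcal A^{2/(2-a)}$ is again absorbed, giving \eqref{C3}. Finally, since the right-hand side of \eqref{Gtwo33} contains $CG(t)^{2/(2-a)}$ and $G(t)\ge 1$ forces $G(t)^{2/(2-a)}\ge 1$, the bound $\tfrac12 B_1\le\tfrac12 B_*\le C\le CG(t)^{2/(2-a)}$ is absorbed and \eqref{C4} follows for $t>T$.

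I do not expect a genuine obstacle: once \eqref{H22} is available, the corollary is essentially bookkeeping. The only point needing a little care is to verify, inequality by inequality, that the additive $B_1$-term is dominated by something already on the right — which is precisely why $B_*$ was introduced in \eqref{Bstar} and why $G(t)\ge 1$, ${\mathcal M}(t)\ge 1$, $\mathcal A\ge 1$ were recorded. Uniformly, one may simply note $\tfrac12 B_1\le\tfrac12 B_*\le\tfrac12 G(t)\le\tfrac12{\mathcal M}(t)$ and that every one of the four target right-hand sides dominates a positive multiple of $G(t)$ or of ${\mathcal M}(t)$.
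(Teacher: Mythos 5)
Your proposal is correct and is essentially the paper's own proof: integrate the left inequality of \eqref{H22} to get $\int_U W_1|\nabla p|^{2-a}dx\le \frac{B_*}{2}+\int_U H(x,|\nabla p|)dx$, then invoke \eqref{gradpW111}, \eqref{newgrad}, \eqref{gradpWone}/\eqref{Gtwo5}, \eqref{Gtwo33} and absorb the constant via $B_*\le G(t)\le{\mathcal M}(t)$ (hence also $B_*\le\mathcal A$). The only slip is the intermediate claim $B_*\le C$ — the generic $C$ is not allowed to depend on the $a_i$'s — but your closing observation $\tfrac12 B_1\le\tfrac12 B_*\le\tfrac12 G(t)$ already repairs this in every case, so no gap remains.
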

\begin{proof}
Using property \eqref{H22}, definitions \eqref{B1} and \eqref{Bstar} we have
\beqs
\int_U W_1(x)|\nabla p(x,t)|^{2-a} dx \le \int_U \Big[\frac{a_N(x)}2 + H(x,|\nabla p(x,t)|)\Big]  dx
\le \frac{B_*}2+ \int_U H(x,|\nabla p(x,t)|)  dx.
\eeqs
Also, from definition \eqref{G}, $G(t)\ge B_*$.
With these  relations, the estimates \eqref{gradpW1}, \eqref{C1}, \eqref{C3}, \eqref{C4} immediately follow \eqref{gradpW111}, \eqref{newgrad}, \eqref{Gtwo5}, \eqref{Gtwo33}, respectively.
\end{proof}


\section{Estimates for the pressure's time derivative}\label{ptsec}
In this section, we  estimate the pressure's time derivative. Let 
$$q(x,t)=p_t(x,t)\quad \text{and}\quad  \bar{q}(x,t)=\bar{p}_t(x,t)=p_t(x,t)-\Psi_t.$$ 
Then $\bar{q}$ solves 
\begin{align} \label{Qt}
\phi(x)\begin{displaystyle} \frac{\partial \bar{q}}{\partial t}\end{displaystyle}
&=\nabla \cdot(K(x,|\nabla p |)\nabla p)_t-\phi(x)\Psi_{tt} \quad \text{on } U\times(0,\infty), \\ 
\bar{q} &=0 \quad  \text{on }  \Gamma \times (0,\infty). \nonumber
\end{align}

In the following estimates, we use 
\beq\label{Gsix}
G_2(t)=G_2[\Psi](t)\eqdef\int_U|\Psi_{tt}(x,t)|^2 \phi(x) dx.
\eeq

\begin{lemma}
One has for $t>0$ and $\varepsilon>0$ that
\begin{multline}\label{qest1}
 \frac d{dt}\int_U\bar q^2(x,t)\phi(x) dx
\le-(1-a)\int_U K(x,|\nabla p(x,t) |)|\nabla q(x,t)|^2dx\\
+\varepsilon\int_U|\bar q(x,t)|^2 \phi(x) dx+CG_1(t)+C\varepsilon^{-1} G_2(t).
\end{multline}
\end{lemma}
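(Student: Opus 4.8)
The plan is to test equation \eqref{Qt} with $\bar q$ and then isolate the dissipative term using the monotonicity of Lemma \ref{mono2} and the derivative bound \eqref{Kderiva}. Multiplying \eqref{Qt} by $\bar q(x,t)$, integrating over $U$ and integrating by parts — the boundary term dropping because $\bar q=0$ on $\Gamma$ — gives
\[
\tfrac12\tfrac{d}{dt}\int_U \bar q^2\phi\,dx = -\int_U \big(K(x,|\nabla p|)\nabla p\big)_t\cdot\nabla\bar q\,dx-\int_U \bar q\,\Psi_{tt}\,\phi\,dx.
\]
Writing $\nabla\bar q=\nabla q-\nabla\Psi_t$ splits the first integral into a \emph{diagonal} part $-\int_U(K(x,|\nabla p|)\nabla p)_t\cdot\nabla q\,dx$ and a \emph{cross} part $\int_U(K(x,|\nabla p|)\nabla p)_t\cdot\nabla\Psi_t\,dx$, and the whole argument reduces to estimating these two together with the $\Psi_{tt}$-term.

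For the diagonal part I would use the infinitesimal form of Lemma \ref{mono2}: applying it with $y=\nabla p(x,t+h)$, $y'=\nabla p(x,t)$, dividing by $h^2$ and letting $h\to0$ (equivalently, differentiating $K(x,|\nabla p|)\nabla p$ in time and invoking \eqref{Kderiva}, which forces $|\partial_\xi K(x,\xi)|\,\xi\le aK(x,\xi)$) yields the pointwise inequality $\big(K(x,|\nabla p|)\nabla p\big)_t\cdot\nabla q\ge(1-a)K(x,|\nabla p|)|\nabla q|^2$, hence $-\int_U(K(x,|\nabla p|)\nabla p)_t\cdot\nabla q\,dx\le-(1-a)\int_U K(x,|\nabla p|)|\nabla q|^2\,dx$. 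The same bound \eqref{Kderiva}, via $(K(x,|\nabla p|)\nabla p)_t=\partial_\xi K(x,|\nabla p|)\tfrac{\nabla p\cdot\nabla q}{|\nabla p|}\nabla p+K(x,|\nabla p|)\nabla q$, also gives the matching upper bound $\big|(K(x,|\nabla p|)\nabla p)_t\big|\le(1+a)K(x,|\nabla p|)|\nabla q|$, which is exactly what I would use on the cross part: Cauchy--Schwarz followed by this bound, then Young's inequality and the estimate $K(x,\xi)\le a_0(x)^{-1}$ from \eqref{decK}, turns it into $\tfrac{1-a}{2}\int_U K(x,|\nabla p|)|\nabla q|^2\,dx+CG_1(t)$. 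The term $-\int_U\bar q\,\Psi_{tt}\,\phi\,dx$ is handled by Young's inequality with parameter $\varepsilon$, producing $\tfrac{\varepsilon}{2}\int_U\bar q^2\phi\,dx+C\varepsilon^{-1}G_2(t)$.

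To finish, multiply the displayed identity by $2$ and collect terms: the diagonal part contributes $-2(1-a)\int_U K|\nabla q|^2\,dx$, the cross part contributes $\le(1-a)\int_U K|\nabla q|^2\,dx+CG_1(t)$ once the Young parameter is chosen so that exactly a $(1-a)$-fraction of the dissipation is re-absorbed, and the $\Psi_{tt}$-term contributes $\le\varepsilon\int_U\bar q^2\phi\,dx+C\varepsilon^{-1}G_2(t)$; adding these three gives precisely \eqref{qest1}. The main obstacle — and essentially the only delicate point — is the handling of $\big(K(x,|\nabla p|)\nabla p\big)_t$: one needs the two-sided control coming from \eqref{Kderiva}, namely the sharp lower bound $(K\nabla p)_t\cdot\nabla q\ge(1-a)K|\nabla q|^2$ for the dissipation and the matching upper bound $|(K\nabla p)_t|\le(1+a)K|\nabla q|$ for the cross term, together with the bookkeeping of constants (the factor $2$ and the Young parameter) that leaves exactly $-(1-a)$ in front of $\int_U K|\nabla q|^2\,dx$; a minor technical point is the behaviour where $\nabla p=0$, where the chain-rule computation should be replaced by the difference-quotient form of Lemma \ref{mono2}.
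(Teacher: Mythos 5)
Your proposal is correct and follows essentially the same route as the paper: testing \eqref{Qt} with $\bar q$, splitting $\nabla\bar q=\nabla q-\nabla\Psi_t$, using \eqref{Kderiva} (the infinitesimal form of the monotonicity) to extract the $-(1-a)\int_U K|\nabla q|^2dx$ dissipation and the bound $(1+a)K|\nabla q|$ for the cross term, then Young's inequality together with \eqref{decK} for the $G_1$ and $G_2$ terms. The only difference is presentational — you package the expansion of $(K(x,|\nabla p|)\nabla p)_t$ into two pointwise inequalities rather than expanding the integrals directly — and your bookkeeping of constants matches \eqref{qest1}.
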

\begin{proof}
Multiplying \eqref{Qt} by $\bar q$, integrating over $U$, and using integration by parts  we have
\begin{align*}
\int_U\frac{\partial \bar{q}}{\partial t}\bar q\phi dx
&=\int_U \nabla \cdot(K(x,|\nabla p |)\nabla p)_t \bar q dx
-\int_U \Psi_{tt}\phi)\bar q  dx\\
&=-\int_U (K(x,|\nabla p |)\nabla p)_t\cdot\nabla\bar qdx-\int_U\Psi_{tt}\bar q \phi dx.
\end{align*}
Taking the derivative in $t$ for the first integral on the right-hand side, we derive 
\beqs
\frac 12 \frac d{dt}\int_U\bar q^2\phi dx
= -\int_U \frac{\partial K(x,|\nabla p |)}{\partial \xi}\frac {(\nabla p\cdot\nabla q)}{|\nabla p|}(\nabla p\cdot\nabla\bar q)dx
-\int_U K(x,|\nabla p |)\nabla q\cdot\nabla\bar qdx-\int_U\Psi_{tt}\bar q \phi dx.
\eeqs
Using the fact that  $\bar{q}(x,t)=q(x,t)-\Psi_t$ for the first two integrals on the right-hand side, we rewrite 
\begin{align*}
\frac 12 \frac d{dt}\int_U\bar q^2\phi dx
&= -\int_U \frac{\partial K(x,|\nabla p |)}{\partial \xi} \frac{|\nabla p\cdot\nabla q|^2}{|\nabla p|}dx 
+\int_U \frac{\partial K(x,|\nabla p |)}{\partial \xi} \frac{\nabla p\cdot\nabla q}{|\nabla p|}\nabla p\cdot\nabla\Psi_tdx\\
&\quad-\int_U K(x,|\nabla p |)|\nabla q|^2dx+\int_U K(x,|\nabla p |)\nabla q\cdot\nabla \Psi_tdx
-\int_U\Psi_{tt}\bar q \phi dx.
\end{align*}
Next, by derivative property  \eqref{Kderiva} and Cauchy-Schwarz's inequality we obtain 
\begin{align*}
&\frac 12 \frac d{dt}\int_U\bar q^2\phi dx
\le a\int_U K(x,|\nabla p |)|\nabla q|^2dx+a\int_U K(x,|\nabla p |)|\nabla q||\nabla\Psi_t|dx\\
&\quad-\int_U K(x,|\nabla p |)|\nabla q|^2dx+\int_U K(x,|\nabla p |)|\nabla q||\nabla \Psi_t|dx-\int_U\Psi_{tt}\bar q\phi dx\\
&\le -(1-a)\int_U K(x,|\nabla p |)|\nabla q|^2dx+(a+1)\int_U K(x,|\nabla p |)|\nabla q||\nabla\Psi_t|dx+\int_U|\Psi_{tt}| |\bar q| \phi dx.
\end{align*}
Let $\varepsilon'>0$.
Applying Cauchy's inequality to the last two  integrals gives
\begin{align*}
\frac 12 \frac d{dt}\int_U\bar q^2\phi dx
&\le(\varepsilon'(a+1)-(1-a))\int_U K(x,|\nabla p |)|\nabla q|^2dx\\
&\quad +\frac{a+1}{4\varepsilon'}\int_U K(x,|\nabla p |)|\nabla\Psi_t|^2dx
+\varepsilon\int_U|\bar q|^2 \phi dx+\frac{1}{4\varepsilon}\int_U|\Psi_{tt}|^2 \phi dx.
\end{align*}
We estimate $K(x,|\nabla p|)$ in the second integral on the right-hand side by \eqref{decK}, then it follows
\begin{align*}
\frac 12 \frac d{dt}\int_U\bar q^2\phi dx
&\le(\varepsilon'(a+1)-(1-a))\int_U K(x,|\nabla p |)|\nabla q|^2dx\\
&\quad+\frac{a+1}{4\varepsilon'}\int_U a_0(x)^{-1}|\nabla\Psi_t|^2dx
+\varepsilon\int_U|\bar q|^2 \phi dx+\frac{G_2(t)}{4\varepsilon}.
\end{align*} 
Selecting  $\varepsilon'=(1-a)/(2(1+a))$  gives
\beqs 
\frac12 \frac d{dt}\int_U\bar q^2\phi dx\le-\frac{1-a}2\int_U K(x,|\nabla p |)|\nabla q|^2dx+CG_1(t)+\varepsilon\int_U|\bar q|^2 \phi dx+\frac {C G_2(t)}\varepsilon ,
\eeqs
which proves \eqref{qest1}.
\end{proof}

The next theorem contains different estimates of $\|\bar p_t(t)\|_{L_\phi^2}$ for both small and large time in terms of the initial and boundary data.
Note that we cannot estimate the norm at $t=0$.

\begin{theorem}
 \
\begin{enumerate}
\item[\rm(i)] For $t_0\in(0,1]$ and $t\ge t_0$,
\begin{multline}\label{greatert0}
 \int_U H(x,|\nabla p(x,t)|)dx+\int_U\bar p_t^2(x,t)\phi(x) dx
\le C\Big(t_0^{-1} \int_U \Big[H(x,|\nabla p(x,0)|)+\bar p^2(x,0)\phi(x)\Big] dx\\
 + t_0^{-1}\int_0^{t_0} G_1(\tau)d\tau + {\mathcal M}(t)^\frac{2}{2-a} + \int_{0}^t e^{-\frac14(t-\tau)} (G_1(\tau)+G_2(\tau))d\tau\Big).
\end{multline}

\item[\rm(ii)] If $t\ge 1$ then 
\beq\label{G86}
 \int_U\bar p_t^2(x,t)\phi(x) dx \le C\Big( \int_U \bar{p}^2(x,0)\phi(x) dx+ {\mathcal M}(t)^{\frac 2{2-a}}  +\int_{t-1}^t (G_1(\tau)+G_2(\tau))d\tau\Big).
\eeq

\item[\rm(iii)] If $\mathcal A<\infty$ then 
\beq\label{Gthree3}
  \limsup_{t\to \infty} \int_U\bar p_t^2(x,t)\phi(x) dx \le  C\Big(\mathcal A^{\frac 2{2-a}}
+ \limsup_{t\to \infty} \int_{t-1}^t (G_1(\tau)+G_2(\tau)) d\tau\Big).
\eeq
Consequently,
\beq\label{limsupq1}
\limsup_{t\to\infty} \int_U \bar p_t^2(x,t)\phi(x) dx
\le C\Big(\mathcal A^\frac2{2-a} +  \limsup_{t\to\infty} (G_1(t)+G_2(t))\Big).
\eeq

\item[\rm(iv)] If $\mathcal B<\infty$ then  there is $T>0$ such that for all $t>T$,
\beq\label{Gthree11}
 \int_U\bar p_t^2(x,t)\phi(x) dx \le C\Big(\mathcal B^{\frac 1{1-a}}+ G(t)^{\frac 2{2-a}}+\int_{t-1}^t (G_1(\tau)+G_2(\tau))d\tau\Big).
\eeq
\end{enumerate}
\end{theorem}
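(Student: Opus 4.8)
The plan is to track the two time-dependent quantities
\[
Y(t)=\int_U H(x,|\nabla p(x,t)|)\,dx,\qquad Z(t)=\int_U\bar p_t^2(x,t)\phi(x)\,dx ,
\]
and to combine the two basic differential inequalities already established. Inequality \eqref{dH0} reads $Y'+\tfrac12 Z+\tfrac14 Y\le 2\int_U\bar p^2\phi\,dx+C(G+G_1)$, while inequality \eqref{qest1}, after discarding the nonpositive term $-(1-a)\int_U K(x,|\nabla p|)|\nabla p_t|^2\,dx$, gives $Z'\le\varepsilon Z+CG_1+C\varepsilon^{-1}G_2$ for every $\varepsilon>0$. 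Setting $F=Y+Z$, taking $\varepsilon=\tfrac14$ in the second inequality and adding it to the first, the $Z$-terms combine to $-\tfrac14 Z$, so
\[
F'(t)+\tfrac14 F(t)\le 2\int_U\bar p^2(x,t)\phi(x)\,dx+C\big(G(t)+G_1(t)+G_2(t)\big).
\]
Bounding the first term on the right by \eqref{EstLtwo} and using $G\le\mathcal M\le\mathcal M^{2/(2-a)}$ (from \eqref{MG}, \eqref{MA}) yields $F'+\tfrac14 F\le \tilde h(t)$, where $\tilde h(t)=C\big(\int_U\bar p^2(x,0)\phi\,dx+\mathcal M(t)^{2/(2-a)}+G_1(t)+G_2(t)\big)$; note $F(t)$ is exactly the left-hand side of \eqref{greatert0}.

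For part (i), Gronwall's inequality gives $F(t)\le e^{-(t-s)/4}F(s)+\int_s^t e^{-(t-\tau)/4}\tilde h(\tau)\,d\tau$ for all $0\le s\le t$. Since $F(0)$ involves $\bar p_t(x,0)$, which is not controlled by the data, we cannot take $s=0$; instead, since $t\ge t_0$, we average this inequality over $s\in[0,t_0]$ and use $e^{-(t-s)/4}\le 1$ to obtain $F(t)\le t_0^{-1}\int_0^{t_0}F(s)\,ds+\int_0^t e^{-(t-\tau)/4}\tilde h(\tau)\,d\tau$. The quantity $\int_0^{t_0}Y(s)\,ds$ is estimated by integrating \eqref{gradpW111} (i.e.\ Theorem \ref{graddest1}(i)) in $s$, and $\int_0^{t_0}Z(s)\,ds$ by integrating \eqref{dH0} in $s$ and bounding the resulting $\int_U\bar p^2\phi$ term via \eqref{EstLtwo}; because $t_0\le 1$ and $\mathcal M$ is increasing, both produce precisely $C\big(t_0^{-1}\int_U[H(x,|\nabla p(x,0)|)+\bar p^2(x,0)\phi]\,dx+t_0^{-1}\int_0^{t_0}G_1\,d\tau+\mathcal M(t)^{2/(2-a)}\big)$. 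The remaining integral $\int_0^t e^{-(t-\tau)/4}\tilde h(\tau)\,d\tau$ collapses to $C\big(\int_U\bar p^2(x,0)\phi\,dx+\mathcal M(t)^{2/(2-a)}+\int_0^t e^{-(t-\tau)/4}(G_1+G_2)\,d\tau\big)$, using $\int_0^t e^{-(t-\tau)/4}\,d\tau\le 4$ and again monotonicity of $\mathcal M$. Assembling these gives \eqref{greatert0}.

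For parts (ii)--(iv) the coercive term is not needed: taking $\varepsilon=1$ above yields $Z'\le Z+C(G_1+G_2)$, and a uniform Gronwall argument on $[t-\tfrac12,t]$ for $t\ge1$ gives $Z(t)\le e^{1/2}\big(Z(s)+C\int_{t-1/2}^t(G_1+G_2)\,d\tau\big)$ for $s\in[t-\tfrac12,t]$; averaging in $s$ turns $\int_{t-1/2}^t Z(s)\,ds=\int_{t-1/2}^t\int_U\bar p_t^2\phi\,dx\,ds$ into a controlled quantity via Lemma \ref{lemHuni}. This produces $Z(t)\le C\big(\int_U\bar p^2(x,t-1)\phi\,dx+\int_{t-1}^t(G+G_1+G_2)\,d\tau\big)$ for $t\ge1$. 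From here: \eqref{G86} follows by bounding $\int_U\bar p^2(x,t-1)\phi\,dx$ with \eqref{EstLtwo} and using $\int_{t-1}^t G\,d\tau\le\mathcal M(t)\le\mathcal M(t)^{2/(2-a)}$; \eqref{Gthree3} follows by taking $\limsup$ and invoking \eqref{noinitial}, the analogue of \eqref{intlim}, and $\mathcal A\ge1$, and then \eqref{limsupq1} follows from $\limsup_{t\to\infty}\int_{t-1}^t(G_1+G_2)\,d\tau\le\limsup_{t\to\infty}(G_1(t)+G_2(t))$; \eqref{Gthree11} follows by bounding $\int_U\bar p^2(x,t-1)\phi\,dx$ with \eqref{EstLtwo7} and $G(\tau)$ on $[t-1,t]$ by $G(\tau)\le G(t)+\mathcal B+1$ (Lemma \ref{Btt}), then simplifying with \eqref{ee3}, \eqref{ee5}, the inequality $\tfrac2{2-a}\le\tfrac1{1-a}$, and $G(t)\ge1$.

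The main obstacle is part (i): since the equation \eqref{Qt} controls $\bar p_t$ only at positive times, $Z(0)$ (hence $F(0)$) cannot be estimated by the data, so a direct Gronwall from $t=0$ is unavailable. The device of averaging the Gronwall bound over an initial window $[0,t_0]$ --- at the cost of the factor $t_0^{-1}$ --- and feeding in the already-established time-integrated bounds for $Y$ and $Z$ is what makes (i) work; the only delicate point is the bookkeeping showing that the exponentially weighted error integrals collapse to the stated right-hand side (in particular $\int_0^t e^{-(t-\tau)/4}\mathcal M(\tau)^{2/(2-a)}\,d\tau\le C\mathcal M(t)^{2/(2-a)}$, and that $t_0^{-1}\int_0^{t_0}G_1\,d\tau$ is the only surviving initial-window data term). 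Parts (ii)--(iv) are then routine consequences of the $t\ge1$ estimate combined with Theorem \ref{thm32} and Lemmas \ref{lemHuni} and \ref{Btt}.
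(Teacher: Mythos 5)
Your proposal is correct and follows essentially the same route as the paper: the combined differential inequality for $\int_U H\,dx+\int_U\bar p_t^2\phi\,dx$ (the paper's \eqref{dHq}), an initial-window device to circumvent the uncontrollable value of $\|\bar p_t(0)\|_{L^2_\phi}$ (you average the Gronwall bound over $[0,t_0]$, the paper picks a mean-value time $t_*\in(0,t_0)$ from the time-integrated bound --- the same idea), and for (ii)--(iv) the uniform-Gronwall argument on $[t-\tfrac12,t]$ fed by Lemma \ref{lemHuni}, concluded with \eqref{EstLtwo}, \eqref{noinitial}, \eqref{EstLtwo7} and Lemma \ref{Btt}. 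The only differences are cosmetic (averaging versus a mean-value point, and the $e^{1/2}$ Gronwall factor versus direct integration), so no further comment is needed.
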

\begin{proof}
Denote $I(t)=\int_U H(x,|\nabla p(x,t)|)dx+\int_U\bar q^2(x,t)\phi(x) dx$ for $t>0$.

(i) Adding \eqref{dH0} to \eqref{qest1} with  $\varepsilon=1/4$ yields
\beq\label{dHq}
\frac d{dt} I(t) + \frac14 I(t)\le C\int_U \bar{p}^2\phi dx+CG_3(t),\quad \text{where }G_3=G+G_1+G_2.
\eeq

Integrating  \eqref{ptrowest} in time from $0$ to $t$, we have 
\beq\label{qtimeint}
 \int_0^t I(\tau)  d\tau\le  C_1 I(0) +C_1\int_0^t (G(\tau)+G_1(\tau))d\tau
\eeq
for some $C_1>0$.  Applying \eqref{qtimeint} to $t=t_0$, then there exists $t_*\in (0,t_0)$ such that
\beq\label{mvt}
 I(t_*) \le  \frac {2C_1}{t_0} I(0) +\frac {2C_1}{t_0} \int_0^{t_0} (G(\tau)+G_1(\tau))d\tau.
\eeq

For $t\ge t_0$, applying  Gronwall's inequality to \eqref{dHq} on the interval $[t_*,t]$, and then combining with \eqref{mvt}, we 
have
\begin{align*}
I(t)
&\le e^{-\frac14(t-t_*)} I(t_*) +C\int_{t_*}^t e^{-\frac14(t-\tau)}\Big [\int_U \bar{p}^2(x,\tau)\phi(x) dx+G_3(\tau)\Big]d\tau\\
& \le \frac{ C }{t_0}I(0) +\frac{ C }{t_0}\int_0^{t_0} (G(\tau)+G_1(\tau))d\tau
 +C\int_{0}^t e^{-\frac14 (t-\tau)}\Big [\int_U \bar{p}^2(x,\tau)\phi(x) dx+G_3(\tau)\Big]d\tau.
\end{align*}
Using \eqref{EstLtwo} to estimate the  integral $\int_U \bar{p}^2(x,\tau)\phi(x) dx$ yields
\begin{align*}
I(t)
&\le \frac{ C }{t_0} I(0) +\frac{ C }{t_0} \int_0^{t_0} (G(\tau)+G_1(\tau))d\tau\\
&\quad +C\int_{0}^t e^{-\frac14 (t-\tau)} \Big[\int_U \bar{p}^2(x,0)\phi(x) dx+{\mathcal M}(\tau)^\frac2{2-a}+G_3(\tau)\Big]d\tau.
\end{align*}
Since $\mathcal M(t)$ is increasing, see \eqref{MG}, it follows that 
\begin{multline}\label{It}
I(t)
\le \frac{ C }{t_0} I(0) +\frac{ C }{t_0} \int_0^{t_0} G_1(\tau)d\tau +C\mathcal M(t_0) + C \int_U \bar{p}^2(x,0)\phi(x) dx\\
 +{\mathcal M}(t)^\frac2{2-a} +C\int_{0}^t e^{-\frac14 (t-\tau)} G_3(\tau)d\tau.
\end{multline}
Also, from  \eqref{MA}, $\mathcal M(t)\ge 1$, then $\mathcal M(t_0)\le \mathcal M(t)\le \mathcal M(t)^\frac2{2-a}$.
Thus, we obtain estimate \eqref{greatert0} from \eqref{It}.

(ii) Using \eqref{qest1} with $\varepsilon=1/2$, and dropping first integral on the right-hand side, we have 
\beq\label{dq}
\frac 12 \frac d{dt}\int_U\bar q^2\phi dx\le \frac 12\int_U \bar q^2\phi  dx+C_2(G_1(t)+G_2(t))
\eeq
for some $C_2>0$.
For $s\in[t-\frac 12, t]$, integrating \eqref{dq} in time from $s$ to $t$ gives
\begin{align*}
\frac 12 \int_U\bar q^2(x,t)\phi dx
&\le \frac 12 \int_U\bar q^2(x,s)\phi dx+\frac 12\int_s^t\int_U|\bar q(x,\tau)|^2\phi  dxd\tau+ C_2\int_s^t (G_1(\tau)+G_2(\tau))d\tau\\
& \le \frac 12 \int_U\bar q^2(x,s)\phi dx+\frac 12\int_{t-\frac 12}^t\int_U|\bar q(x,\tau)|^2 \phi dxd\tau+ C_2\int_{t-1}^t  (G_1(\tau)+G_2(\tau))d\tau.
\end{align*}
Next, integrating in $s$ from $t-\frac 12$ to $t$, we have 
\begin{multline*}
\frac 12\cdot \frac 12 \int_U\bar q^2(x,t)\phi dx
 \le \frac 12  \int_{t-\frac 12}^t\int_U\bar q^2(x,s)\phi dxds
+\frac 12\cdot \frac 12\int_{t-\frac 12}^t\int_U|\bar q(x,\tau)|^2 \phi dxd\tau\\
+  \frac {C_2}2\int_{t- 1}^t (G_1(\tau)+G_2(\tau))dxd\tau
=\frac 34  \int_{t-\frac 12}^t\int_U\bar q^2(x,s)\phi dxds
+  \frac {C_2}2\int_{t- 1}^t  (G_1(\tau)+G_2(\tau))d\tau.
\end{multline*}
Hence,\begin{align*}
\int_U\bar q^2(x,t)\phi dx \le 3  \int_{t-\frac 12}^t\int_U\bar q^2(x,s)\phi dxds
+  C\int_{t- 1}^t  (G_1(\tau)+G_2(\tau))d\tau.
\end{align*}
Using \eqref{Gtwo} to bound the first integral on the right-hand side, we obtain 
\beq\label{Gthree}
 \int_U\bar p_t^2(x,t)\phi dx \le C\int_U \bar{p}^2(x,t-1)\phi dx +C\int_{t-1}^t (G(\tau)+G_1(\tau)+G_2(\tau))d\tau.
\eeq

Combining \eqref{EstLtwo} and \eqref{Gthree} gives
\beqs
 \int_U\bar p_t^2(x,t)\phi dx \le C \int_U \bar{p}^2(x,0)\phi(x) dx+ C{\mathcal M}(t-1)^{\frac 2{2-a}} +C\int_{t-1}^t (G(\tau)+G_1(\tau)+G_2(\tau)) d\tau.
\eeqs
Again, by properties \eqref{MG} and \eqref{MA}, estimate  \eqref{G86} follows.

(iii) Taking limit superior of \eqref{Gthree} and using \eqref{noinitial},  we obtain 
\beqs
\limsup_{t\to\infty} \int_U\bar p_t^2(x,t)\phi dx \le C\mathcal A^\frac2{2-a} +C\mathcal A+C\limsup_{t\to\infty}\int_{t-1}^t (G_1(\tau)+G_2(\tau))d\tau,
\eeqs
which yields \eqref{Gthree3}.
The estimate \eqref{limsupq1} follows \eqref{Gthree3} and property \eqref{intlim} for functions $G_1$ and $G_2$ in place of $G$.

(iv) For sufficiently large $t$, estimating the first term on the right-hand side of \eqref{Gthree} by \eqref{EstLtwo7}, and then applying Lemma \ref{Btt} to bound $G(t-1)$ and $G(\tau)$ in terms of $\mathcal B$ and $G(t)$,  we obtain \begin{align*}
 \int_U\bar p_t^2(x,t)\phi dx 
 &\le C(\mathcal B^{\frac 1{1-a}}+ G(t-1)^{\frac 2{2-a}}) +C\int_{t-1}^t (G(\tau)+G_1(\tau)+G_2(\tau))d\tau\\
  &\le C\mathcal B^{\frac 1{1-a}}+ C(1+\mathcal B+G(t))^{\frac 2{2-a}}
  +C(1+\mathcal B+G(t)) 
  +C\int_{t-1}^t (G_1(\tau)+G_2(\tau))d\tau.
\end{align*}
Then  \eqref{Gthree11} follows by simple manipulations using inequalities  \eqref{ee3}, \eqref{ee5}.
\end{proof}

\section{Continuous dependence}\label{dependence}

In this section, we establish the continuous dependence of the solution on the initial and boundary data.


Let $p_1(x,t)$ and  $p_2(x,t)$ be two solutions of \eqref{eq8} with boundary data $\psi_1(x,t)$ and $\psi_2(x,t)$, respectively. 
 For $i=1,2$, let $\Psi_i(x,t)$ be an extension of $\psi_i(x,t)$, and define $\bar p_i=p_i-\Psi_i$. 
Denote 
$$P=p_1-p_2,\ 
\Phi=\Psi_1-\Psi_2\quad \text{and}\quad \bar P=\bar{p}_1-\bar{p}_2=P-\Phi.$$
 Then
\begin{align}
\label{twopeqn}
\phi(x) \frac{\partial \bar P}{\partial{t}}&=\nabla \cdot (K(x,|\nabla p_1|)\nabla  p_1-  K(x,|\nabla p_2|)\nabla p_2) -\phi(x)\Phi_t  \quad \text{on }  U\times(0, \infty),\\
\bar P&=0\quad \text{on }\Gamma \times (0, \infty).\nonumber
\end{align}

The weighted norms of $\bar P$ and $\Phi$ are related by the following differential inequalities.

\begin{lemma}\label{lem61}
For all $t>0$, one has
\beq\label{r1}
\frac d{dt}\int_U \bar P^2(x,t) \phi(x) dx 
\le -d_3 h_1(t)^{-\frac{a}{2-a}} \left(\int_U W_1(x) |\nabla \bar P(x,t)|^{2-a}dx\right)^\frac 2 {2-a}+CD(t)h_2(t)^\frac12,
\eeq
\beq\label{r2}
 \frac d{dt}\int_U \bar P^2(x,t) \phi(x) dx 
\le -d_4 h_1(t)^{-\frac{a}{2-a}}\int_U  \bar P^{2}(x,t)\phi(x) dx+CD(t)h_2(t)^\frac12,
\eeq
where $d_3,d_4>0$,
\begin{align}
\label{Ddef}
D(t)&= \int_U a_0(x)^{-1}|\nabla\Phi(x,t)|^2dx+\Big(\int_U a_0(x)^{-1}|\nabla\Phi(x,t)|^2dx\Big)^\frac12 +\Big(\int_U  |\Phi_t(x,t)|^2\phi(x) dx\Big)^\frac12,\\
\label{h1}
h_1(t)&=B_1+ \sum_{i=1}^2\int_U H(x,|\nabla p_i(x,t)|) dx,\\
\label{h2}
h_2(t)&= 1+\sum_{i=1}^2  \int_U\Big [H(x,|\nabla p_i(x,t)|) +\bar p_i^2(x,t)\phi(x)\Big]dx. 
\end{align}
\end{lemma}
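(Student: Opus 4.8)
The plan is to mimic the energy estimate from Lemma of Section~\ref{psec} applied to the difference equation \eqref{twopeqn}, but now using the genuine monotonicity of Lemma~\ref{mono2} to produce the dissipation term in $|\nabla\bar P|$ rather than just $|\nabla p|$. First I would multiply \eqref{twopeqn} by $\bar P(x,t)$, integrate over $U$, and integrate by parts to get
\[
\frac12\frac{d}{dt}\int_U \bar P^2\phi\,dx
= -\int_U \bigl(K(x,|\nabla p_1|)\nabla p_1 - K(x,|\nabla p_2|)\nabla p_2\bigr)\cdot\nabla\bar P\,dx
- \int_U \bar P\,\Phi_t\,\phi\,dx.
\]
Writing $\nabla\bar P = \nabla P - \nabla\Phi = (\nabla p_1-\nabla p_2) - \nabla\Phi$, I split the first integral into the ``monotone'' piece (with $y=\nabla p_1$, $y'=\nabla p_2$, tested against $y-y'$) and a remainder tested against $-\nabla\Phi$. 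Lemma~\ref{mono2} gives, for the monotone piece, a lower bound
$(1-a)\int_U K(x,\max\{|\nabla p_1|,|\nabla p_2|\})|\nabla P|^2\,dx$, which I must convert into the weighted $L^{2-a}_{W_1}$-norm of $\nabla\bar P$.

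The conversion is the technical heart of the argument, and I expect it to be the main obstacle. Using $K(x,\xi)\ge 2W_1(x)/(\xi^a+a_N(x)^a)$ from \eqref{KK} with $\xi=\max\{|\nabla p_1|,|\nabla p_2|\}$, then Hölder's inequality with exponents $\tfrac{2}{2-a}$ and $\tfrac{2}{a}$, one bounds
\[
\int_U W_1(x)|\nabla P|^{2-a}\,dx
\le \Bigl(\int_U \frac{W_1(x)|\nabla P|^2}{\xi^a + a_N(x)^a}\,dx\Bigr)^{\!\frac{2-a}{2}}
\Bigl(\int_U W_1(x)(\xi^a+a_N(x)^a)^{\frac{2-a}{a}}\,dx\Bigr)^{\!\frac{a}{2}},
\]
and the second factor is controlled — via \eqref{ee3}, \eqref{Kwithsquare}/\eqref{H22} and \eqref{W1a} — by a power of $h_1(t)=B_1+\sum_i\int_U H(x,|\nabla p_i|)\,dx$; this is exactly where the weight $h_1(t)^{-a/(2-a)}$ appears in \eqref{r1}. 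Passing from $\nabla P$ to $\nabla\bar P$ costs a triangle-inequality term $\int_U W_1|\nabla\Phi|^{2-a}\,dx$, which I absorb into $D(t)h_2(t)^{1/2}$ after noting $W_1(x)|\nabla\Phi|^{2-a}\le C(a_0(x)^{-1}|\nabla\Phi|^2 + a_N(x))$ (an instance of \eqref{Kwithsquare}-type reasoning, or Young's inequality in the exponent $2-a$), so that the $a_N(x)$ contributes to the ``$1$'' inside $h_2$ and the $a_0^{-1}$-integral is part of $D(t)$.

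For the remaining terms: the piece $\int_U (K(x,|\nabla p_1|)\nabla p_1 - K(x,|\nabla p_2|)\nabla p_2)\cdot\nabla\Phi\,dx$ is estimated by $\sum_i\int_U K(x,|\nabla p_i|)|\nabla p_i||\nabla\Phi|\,dx$, and after Cauchy--Schwarz this becomes $\bigl(\sum_i\int_U K(x,|\nabla p_i|)|\nabla p_i|^2\,dx\bigr)^{1/2}\bigl(\int_U K(x,|\nabla p_i|)|\nabla\Phi|^2\,dx\bigr)^{1/2}$; using \eqref{decK} on the second factor and \eqref{H33} on the first turns this into $C\,h_2(t)^{1/2}\bigl(\int_U a_0^{-1}|\nabla\Phi|^2\,dx\bigr)^{1/2}$, matching the middle term of $D(t)$. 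The term $\int_U |\bar P||\Phi_t|\phi\,dx$ is handled by Cauchy--Schwarz plus the Poincar\'e--Sobolev inequality \eqref{Poincarenew} applied to $u=\bar P$: $\int_U|\bar P|^2\phi\,dx \le c_P^2\bigl(\int_U W_1|\nabla\bar P|^{2-a}\bigr)^{2/(2-a)}$, so it is $\le \bigl(\int_U|\bar P|^2\phi\bigr)^{1/2}\bigl(\int_U|\Phi_t|^2\phi\bigr)^{1/2}$, and the first factor is in turn $\le C\,h_2(t)^{1/2}$ crudely (or kept for \eqref{r2}); this produces the last term of $D(t)$. Collecting everything gives \eqref{r1}. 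Finally, \eqref{r2} follows from \eqref{r1} by applying \eqref{Poincarenew} with $u=\bar P$ once more to replace $\bigl(\int_U W_1|\nabla\bar P|^{2-a}\bigr)^{2/(2-a)}$ on the right-hand side from below by $c_P^{-2}\int_U\bar P^2\phi\,dx$, with $d_4 = d_3 c_P^{-2}$. The bookkeeping of constants and the careful choice of Hölder exponents to land exactly on the exponent $-a/(2-a)$ is the only delicate point; everything else is a routine combination of the inequalities already proved.
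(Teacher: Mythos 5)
Your overall strategy coincides with the paper's: test \eqref{twopeqn} with $\bar P$, apply Lemma \ref{mono2} to the monotone part, estimate the $\nabla\Phi$- and $\Phi_t$-terms exactly as you describe, run the weighted H\"older argument (the paper's $J_1$ computation) to produce the factor $h_1(t)^{-\frac{a}{2-a}}$, and deduce \eqref{r2} from \eqref{r1} via \eqref{Poincarenew}. The one step that fails as written is your passage from $\nabla P$ to $\nabla\bar P$. You pay the triangle-inequality cost $\int_U W_1(x)|\nabla\Phi|^{2-a}dx$ and propose to absorb it through the pointwise bound $W_1|\nabla\Phi|^{2-a}\le C(a_0^{-1}|\nabla\Phi|^2+a_N)$, ``so that the $a_N$ contributes to the $1$ inside $h_2$''. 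But every term on the right-hand side of \eqref{r1} other than the dissipation must carry the factor $D(t)$, which vanishes when $\Psi_1=\Psi_2$; your bound produces, after raising to the power $\frac{2}{2-a}$ and multiplying by $h_1^{-\frac{a}{2-a}}\le B_1^{-\frac{a}{2-a}}$, an additive piece of size $CB_1$ that is independent of $\Phi$ and cannot be dominated by $CD(t)h_2(t)^{1/2}$ (the ``$1$'' in $h_2$ is still multiplied by $D(t)$ in the target estimate). With that extra constant \eqref{r1} is not established, and the downstream continuous-dependence results (e.g.\ \eqref{Pest}, \eqref{limest}) would no longer follow.

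There are two easy repairs. The paper's: convert \emph{before} the weighted H\"older step, writing $|\nabla p_1-\nabla p_2|^2=|\nabla\bar P+\nabla\Phi|^2\ge \tfrac12|\nabla\bar P|^2-|\nabla\Phi|^2$ inside the dissipation; with $\mathcal K(x,t)=K(x,\max\{|\nabla p_1|,|\nabla p_2|\})$ the cost is then $\int_U \mathcal K|\nabla\Phi|^2dx\le\int_U a_0^{-1}|\nabla\Phi|^2dx\le D(t)$ directly by \eqref{decK}, and your H\"older computation is applied to $\int_U\mathcal K|\nabla\bar P|^2dx$, which already involves $\nabla\bar P$. Alternatively, keep your ordering but replace the pointwise Young inequality by H\"older with exponents $\frac2{2-a}$ and $\frac2a$: with $D_2(t)=\int_U a_0^{-1}|\nabla\Phi|^2dx$ one has $\int_U W_1|\nabla\Phi|^{2-a}dx\le\bigl(\int_U a_0^{-1}|\nabla\Phi|^2dx\bigr)^{\frac{2-a}2}\bigl(\int_U (W_1a_0^{\frac{2-a}2})^{\frac2a}dx\bigr)^{\frac a2}\le D_2(t)^{\frac{2-a}2}B_1^{\frac a2}$, since $(W_1a_0^{\frac{2-a}2})^{\frac2a}\le a_N$ by \eqref{W12} and $a_0,a_N\le M$; consequently $h_1(t)^{-\frac a{2-a}}\bigl(\int_U W_1|\nabla\Phi|^{2-a}dx\bigr)^{\frac2{2-a}}\le D_2(t)\,(B_1/h_1(t))^{\frac a{2-a}}\le D(t)\le D(t)h_2(t)^{\frac12}$, which vanishes with $\Phi$ and gives \eqref{r1} in the stated form.
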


\begin{proof}
We define
\beqs
 D_1(t)=\int_U  |\Phi_t(x,t)|^2\phi (x)dx,\
D_2(t)=\int_U a_0(x)^{-1}|\nabla\Phi(x,t)|^2dx,
\eeqs 
\beqs
h_3(t)=\sum_{i=1}^2 \|\bar{p}_i(t)\|_{L_\phi^2}^2,\quad 
h_4(t)= \sum_{i=1}^2\int_U H(x,|\nabla p_i(x,t)|) dx .
\eeqs

Multiplying equation \eqref{twopeqn} by $\bar P$ and integrating  over $U$ give
\begin{align*}
\int_U \bar P \cdot \bar P_t \phi dx =& \int_U (\nabla\cdot \left(K(x,|\nabla p_1|)\nabla  p_1- K(x,|\nabla p_2|)\nabla p_2\right) ) \bar P dx-\int_U  \Phi_t\, \bar P\phi dx.
\end{align*}
Using integration by parts for the first integral on the right-hand side, we have
\begin{align*}
\frac 12 \frac d{dt}\int_U \bar P^2 \phi dx
& = -\int_U \left(K(x,|\nabla p_1|)\nabla{p_1}-K(x,|\nabla p_2|)\nabla{p_2}\right) \cdot \nabla \bar Pdx-\int_U  \Phi_t \, \bar P\phi dx\\
& = -\int_U  \left(K(x,|\nabla p_1|)\nabla p_1-K(x,|\nabla p_2|)\nabla p_2\right) \cdot (\nabla{p}_1-\nabla{p}_2)dx\\
&\quad+\int_U (K(x,|\nabla p_1|)\nabla p_1-K(x,|\nabla p_2|)\nabla p_2)\cdot\nabla \Phi dx-\int_U  \Phi_t\, \bar P\phi dx.
\end{align*}
Applying Lemma \ref{mono2} to the third to last integrand, we obtain
\begin{align*}
\frac 12 \frac d{dt}\int_U \bar P^2 \phi dx 
&\le -(1-a)\int_U  K(x,|\nabla p_1|\vee|\nabla p_2|) |\nabla{p}_1-\nabla{p}_2|^2 dx\\
&\quad + \int_U (K(x,|\nabla p_1|)|\nabla p_1|+K(x,|\nabla p_2|)|\nabla p_2| ) \, |\nabla \Phi|dx+\int_U  |\Phi_t| |\bar P| \phi dx.
\end{align*}
Above, we use the notation $|\nabla p_1|\vee|\nabla p_2|=\max\{|\nabla p_1|,|\nabla p_2|\}$.

For the first integral on the right-hand side, we note that 
\beqs
|\nabla{p}_1-\nabla{p}_2|^2=|\nabla \bar P +\nabla \Phi|^2 \ge \frac12 |\nabla \bar P|^2-|\nabla \Phi|^2,
\eeqs
hence,
\begin{align}
\frac 12 \frac d{dt}\int_U \bar P^2 \phi dx 
&\le -\frac{1-a}2\int_U  K(x,|\nabla p_1|\vee|\nabla p_2|)| \nabla\bar P|^2 dx+ C \int_U  K(x,|\nabla p_1|\vee|\nabla p_2|)|\nabla \Phi|^2 dx \nonumber \\
&\quad + \int_U (K(x,|\nabla p_1|)|\nabla p_1|+K(x,|\nabla p_2|)|\nabla p_2| ) |\nabla \Phi|dx+\int_U  |\Phi_{t}| |\bar P| \phi dx \nonumber\\
&\eqdef  - \frac{1-a}2 I_1 +CI_2+I_3+I_4. \label{IV}
\end{align}

$\bullet$ Consider $I_1$. Let $\mathcal{K}(x,t)= K(x,|\nabla{p}_1(x,t)|\vee|\nabla{p}_2(x,t)|).$
Then by H\"older's inequality,
\begin{align}\label{Kone}
\int_U W_1(x) |\nabla \bar P|^{2-a}dx\le \left( \int_U \mathcal{K}(x,t)|\nabla \bar P|^2dx \right)^{\frac{2-a}2}J_1^\frac a 2,\quad \text{where  } 
J_1=\int_U \frac{W_1(x)^{\frac 2 a}}{\mathcal{K}(x,t)^{\frac{2-a}{a}}}dx.
\end{align} 
Applying \eqref{KK} to bound $\mathcal K(x,t)$ from below, and then using \eqref{ee3}, we estimate $J_1$ as
\begin{align*}
J_1&\le  \int_U W_1(x)^{\frac 2 a}\Big(\frac{(|\nabla{p}_1|\vee|\nabla{p}_2|)^a+a_N(x)^a}{2W_1(x)}\Big)^\frac{2-a}a dx \\
&\le C \left(\int_U W_1(x)a_N(x)^{2-a}dx+\int_U W_1(x)(|\nabla {p}_1|\vee |\nabla {p}_2|)^{2-a} dx\right)\\
&\le C \left(\int_U W_1(x)a_N(x)^{2-a}dx+\int_U W_1(x)(|\nabla {p}_1|^{2-a}+ |\nabla {p}_2|^{2-a}) dx\right).
\end{align*}
Then by \eqref{W1a} and \eqref{H22}, we have 
\begin{align*}
J_1
&\le C \Big(\int_U a_N(x)dx+\int_U [H(x,|\nabla{p}_1)|+H(x,|\nabla {p}_2|)]dx\Big)
=Ch_1(t).
\end{align*}
This and  \eqref{Kone} yield
\beq\label{EstofIone}
\begin{aligned}
 I_1=\int_U \mathcal{K}(x,t)|\nabla \bar P|^2dx \ge C\left(\int_U W_1(x) |\nabla \bar P|^{2-a}dx\right)^\frac 2 {2-a} h_1(t)^{-\frac{a}{2-a}}.
\end{aligned}
\eeq

$\bullet$ For $I_2$, by using \eqref{decK}
\beq\label{estofItwo}
I_2 \le C\int_U a_0(x)^{-1}|\nabla\Phi|^2dx= C D_2(t).
\eeq

$\bullet$  For $I_3$, applying using H\"older's inequality gives
\begin{align*}
I_3&\le \sum_{i=1,2} \Big\{  \Big(\int_U K(x,|\nabla p_i|)|\nabla p_i|^2 dx \Big)^\frac 12\Big(\int_U K(x,|\nabla p_i|)|\nabla \Phi|^2 dx \Big)^\frac 12\Big\}.
\end{align*}
Using \eqref{H33} for the first integral and  \eqref{decK} the second integral, and then applying Cauchy-Schwarz inequality, we have  
\begin{align*}
I_3&\le\sqrt{2}  \Big(\sum_{i=1,2} \int_U H(x,|\nabla p_i|) dx \Big)^\frac 12 \Big(\int_U a_0(x)^{-1}|\nabla \Phi|^2 dx \Big)^\frac 12.
\end{align*}
Thus,
 \beq\label{estIthre}\begin{aligned}
 I_3 \leq Ch_4(t)^\frac{1}{2} D_2(t)^\frac1{2}.
\end{aligned}
\eeq

$\bullet$  For $I_4$, applying the H\"older's inequality gives
\beq\label{estIfour}
 I_4 \le\|\bar P\|_{L_\phi^2} \| \Phi_t\|_{L_\phi^2}\le ( \|\bar{p}_1\|_{L_\phi^2}+ \|\bar p_2\|_{L_\phi^2}) \| \Phi_t\|_{L_\phi^2}=h_3(t)^\frac12D_1(t)^\frac12.
\eeq

Then combining \eqref{IV}, \eqref{EstofIone}, \eqref{estofItwo}, \eqref{estIthre} and \eqref{estIfour} yields 
\begin{align*}
 \frac d{dt}\int_U \bar P^2 \phi dx 
&\le -d_3\left(\int_U W_1(x) |\nabla \bar P|^{2-a}dx\right)^\frac 2 {2-a} h_1(t)^{-\frac{a}{2-a}}\\
&\quad +CD_2(t)+Ch_4(t)^\frac{1}{2} D_2(t)^\frac1{2}+h_3(t)^\frac12 D_1(t)^\frac12.
\end{align*}
Hence
\beq\label{r3}
 \frac d{dt}\int_U \bar P^2(x,t) \phi(x) dx 
\le -d_3 h_1(t)^{-\frac{a}{2-a}} \left(\int_U W_1(x) |\nabla \bar P(x,t)|^{2-a}dx\right)^\frac 2 {2-a}+CD_5(t),
\eeq
where
\beqs
D_5(t)=D_2(t)+h_4(t)^\frac{1}{2} D_2(t)^\frac1{2}+h_3(t)^\frac12 D_1(t)^\frac12.
\eeqs
We estimate 
\beq\label{D5}
D_5(t)\le C(D_2(t)+D_2(t)^\frac1{2}+ D_1(t)^\frac12)(1 +h_4(t)^\frac{1}{2}+h_3(t)^\frac12)\le CD(t) h_2(t)^\frac12.
\eeq
Therefore, \eqref{r1} follows \eqref{r3} and \eqref{D5}. Finally, using Poincar\'e-Sobolev's inequality \eqref{Poincarenew} for $u=\bar P$, \eqref{r1} implies \eqref{r2}.
\end{proof}

To describe more specific estimates, we introduce
\beqs
\mathcal{\bar P}_0=\sum_{i=1}^2 \int_U \bar{p}_i^2(x,0)\phi(x) dx,\quad
\mathcal{H}_0=\sum_{i=1}^2\int_U H(x,|\nabla p_i(x,0)|)dx,
\eeqs
and, referring to \eqref{G}, \eqref{MG},  \eqref{G8}, \eqref{Gsix},  define for $t\ge 0$
\beqs
\tilde G(t)=\sum_{i=1}^2 G[\Psi_i](t),\quad
\tilde {\mathcal M}(t)=\sum_{i=1}^2  {\mathcal M}[\Psi_i](t),
\eeqs
\beqs
\tilde G_1(t)=\sum_{i=1}^2 G_1[\Psi_i](t),\quad 
\tilde G_2(t)=\sum_{i=1}^2 G_2[\Psi_i](t).
\eeqs

In the following, we show that the $L_\phi^2$-norm of $\bar{P}(t)$ for $t>0$ can be bounded by the initial difference 
$\| \bar{P}(0)\|_{L_\phi^2}$ and the difference between the boundary data  expressed by $D(t)$. It means that the solution of \eqref{ppb} depends continuously on the initial and boundary data.

\begin{theorem}\label{diffestint}
For $t\ge 0$,
\beq\label{yhdfive}
\begin{aligned}
\| \bar{P}(t)\|_{L_\phi^2}^2\le  e^{-d_4\int_0^t {\mathcal M}_1(\tau)^{-\frac{a}{2-a}}d\tau}\| \bar{P}(0)\|_{L_\phi^2}^2+ C\int_0^t e^{-d_4\int_s^t {\mathcal M}_1(\tau)^{-\frac{a}{2-a}}d\tau}{\mathcal M}_1(s)^\frac12D(s) \,ds,
\end{aligned}
\eeq
where 
\beqs
{\mathcal M}_1(t)=\mathcal{H}_0+ \bar{\mathcal{P}}_0+\tilde M(t)^\frac2{2-a} +\sup_{\tau\in[0,t]}\tilde G_1(\tau).
\eeqs

In particular, for any $T>0$,
\beq \label{Pest}
\sup_{t\in[0,T]}\| \bar{P}(t)\|_{L_\phi^2}^2 \le  \| \bar{P}(0)\|_{L_\phi^2}^2 + C {\mathcal M}_1(T)^\frac12\int_0^T D(t)dt.
\eeq
\end{theorem}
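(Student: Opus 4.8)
The plan is to use the differential inequality \eqref{r2} from Lemma \ref{lem61}, combined with the estimates on $\int_U H(x,|\nabla p_i(x,t)|)dx$ and $\int_U \bar p_i^2(x,t)\phi(x)dx$ obtained earlier in the paper, to close a linear Gronwall-type argument for $y(t) = \|\bar P(t)\|_{L_\phi^2}^2$. The key observation is that the coefficient $h_1(t)^{-a/(2-a)}$ on the right-hand side of \eqref{r2} must be bounded below by something integrable, and the forcing term $D(t)h_2(t)^{1/2}$ must be bounded above in terms of quantities depending only on the data.

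First I would estimate $h_1(t)$ and $h_2(t)$ from above in terms of the data. By their definitions \eqref{h1}, \eqref{h2}, both involve $\int_U H(x,|\nabla p_i(x,t)|)dx$ and (for $h_2$) $\int_U \bar p_i^2(x,t)\phi(x)dx$. Applying Theorem \ref{theo33}(i) — or rather \eqref{newgrad} applied to each $p_i$ with its own $\Psi_i$ — together with Theorem \ref{thm32}(i) gives, for $t\ge 1$,
\beqs
h_1(t),\ h_2(t) \le C\Big(\mathcal{H}_0 + \bar{\mathcal P}_0 + \tilde{\mathcal M}(t)^{2/(2-a)} + \sup_{\tau\in[0,t]}\tilde G_1(\tau)\Big) \eqdef C{\mathcal M}_1(t),
\eeqs
where for the interval $t\in[0,1]$ one uses \eqref{gradpW111} instead; in both cases the bound is dominated by ${\mathcal M}_1(t)$ once one notes $\int_0^t e^{-(t-\tau)/4}\tilde G_1(\tau)d\tau \le \sup_{[0,t]}\tilde G_1$ and $e^{-t/4}\mathcal H_0 \le \mathcal H_0$, plus the monotonicity of $\tilde{\mathcal M}$. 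Since ${\mathcal M}_1(t)\ge 1$, we get both $h_1(t)^{-a/(2-a)} \ge C{\mathcal M}_1(t)^{-a/(2-a)}$ and $h_2(t)^{1/2}\le C{\mathcal M}_1(t)^{1/2}$. Substituting into \eqref{r2}:
\beqs
y'(t) \le -d_4 C^{-1}{\mathcal M}_1(t)^{-a/(2-a)}\, y(t) + C{\mathcal M}_1(t)^{1/2}D(t).
\eeqs
This is a linear ODE inequality $y' \le -\lambda(t)y + f(t)$ with $\lambda(t) = d_4 C^{-1}{\mathcal M}_1(t)^{-a/(2-a)}$ and $f(t) = C{\mathcal M}_1(t)^{1/2}D(t)$; integrating the factor $\frac{d}{dt}\big(e^{\int_0^t\lambda}y\big) \le e^{\int_0^t\lambda}f$ yields exactly \eqref{yhdfive} (after absorbing the constant $C^{-1}$ into the $d_4$ inside the exponent, which is the normalization implicitly used in the statement).

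For the second part \eqref{Pest}, fix $T>0$ and restrict to $t\in[0,T]$. Since ${\mathcal M}_1$ is increasing (inherited from $\tilde{\mathcal M}$ and the $\sup$), we have ${\mathcal M}_1(s)\le {\mathcal M}_1(T)$ and $e^{-d_4\int_s^t {\mathcal M}_1(\tau)^{-a/(2-a)}d\tau}\le 1$ for $0\le s\le t\le T$; plugging these two crude bounds into \eqref{yhdfive} gives $y(t)\le y(0) + C{\mathcal M}_1(T)^{1/2}\int_0^T D(s)ds$, and taking the sup over $t\in[0,T]$ is immediate.

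\emph{The main obstacle} is the first step: bounding $h_1(t)$ and $h_2(t)$ uniformly by a clean data-dependent expression, and in particular verifying that the small-time regime $t\in(0,1)$ is handled correctly (where \eqref{newgrad} is not available and one must fall back on \eqref{gradpW111}, picking up the $e^{-t/4}\mathcal H_0$ term and the convolution integral $\int_0^t e^{-(t-\tau)/4}\tilde G_1(\tau)d\tau$). One must also be slightly careful that the constant $C$ absorbed from the lower bound on $h_1(t)^{-a/(2-a)}$ is legitimately moved into $d_4$ — i.e. that $d_4$ in the theorem statement is a \emph{new} constant, not the same one as in \eqref{r2}. Everything after the ODE inequality is is a routine integrating-factor computation and a monotonicity observation, so no further difficulty is expected.
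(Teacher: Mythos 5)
Your proposal is correct and follows essentially the same route as the paper: rewrite \eqref{r2} as a linear Gronwall inequality for $y(t)=\|\bar P(t)\|_{L_\phi^2}^2$, bound $h_1(t)$ and $h_2(t)$ by $C{\mathcal M}_1(t)$ via \eqref{gradpW111} (the paper uses \eqref{gradpW111} for all $t$ rather than switching to \eqref{newgrad} for $t\ge 1$, but this is immaterial) and \eqref{EstLtwo}, then integrate, and obtain \eqref{Pest} by monotonicity of ${\mathcal M}_1$ and dropping the exponentials. The constant-absorption point you flag about $d_4$ is present in the paper's own argument as well, so no gap.
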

\begin{proof}
Define $y(t)=\int_U \bar P^2(x,t)\phi(x) dx$. We rewrite \eqref{r2} as 
\beq\label{yprim}
y'(t) \le -d_4 h_1(t)^{-\frac{a}{2-a}} y(t) + C_0D(t)h_2(t)^\frac12.
\eeq
By Gronwall's inequality
\beq\label{mainP}
y(t)\le y(0)e^{-d_4\int_0^t h_1(\tau)^{-\frac{a}{2-a}}d\tau}+C\int_0^t e^{-d_4\int_s^t h_1(\tau)^{-\frac{a}{2-a}}d\tau} h_2(s)^\frac12D(s)ds.
\eeq

Let $t\ge 0$. Using definition \eqref{h1} of $h_1(t)$  and by applying \eqref{gradpW111} to each $p=p_i$ for $i=1,2$, we have 
\begin{align*}
h_1(t)
&\le B_*+ e^{-\frac14 t} \mathcal{H}_0+ C\bar{\mathcal{P}}_0+C\tilde M^\frac2{2-a}(t) +C\int_0^t e^{-\frac14 (t-\tau)}\tilde G_1(\tau)  d\tau .
\end{align*}
Thus,
\beq\label{h1e}
h_1(t)\le C(\mathcal{H}_0+ \bar{\mathcal{P}}_0+\tilde M^\frac2{2-a}(t) +\sup_{\tau\in[0,t]}\tilde G_1(\tau)) 
= C{\mathcal M}_1(t).
\eeq
Similarly, by \eqref{h2}, estimates \eqref{gradpW111} and \eqref{EstLtwo}, we have 
\begin{align*}
h_2(t)
&\le 1+ e^{-\frac14 t} \mathcal{H}_0+ \mathcal{\bar P}_0+ C\tilde {\mathcal M}(t)^{\frac 2{2-a}}+C\int_0^t e^{-\frac14 (t-\tau)}\tilde G_1(\tau)  d\tau,
\end{align*}
which implies
\beq \label{h4e}
h_2(t)\le C{\mathcal M}_1(t).
\eeq
Therefore, we obtain  \eqref{yhdfive} from \eqref{mainP}, \eqref{h1e} and \eqref{h4e}.

Now, let $T>0$.
Neglecting the exponentials in \eqref{yhdfive} and noting that ${\mathcal M}_1(t)\le {\mathcal M}_1(T)$ for all $t\in[0,T]$, we obtain \eqref{Pest}.
\end{proof}

Next, we estimate $\bar P(t)$  when $t\to\infty$. The estimate is independent of the initial data and only depends on the asymptotic behavior of $\Psi_1(x,t)$, $\Psi_2(x,t)$, and $\Phi(x,t)$ as $t\to\infty$.
 
If $\int_0^\infty h_1(t)^{-\frac{a}{2-a}}dt=\infty$, then from \eqref{yprim} and Lemma \ref{ODE3-lem}(ii) we have
\beq\label{diffR}
\limsup_{t\to\infty}\| \bar{P}(t)\|_{L_\phi^2}^2 \le \frac{C_0}{d_4}\limsup_{t\to\infty}\frac{D(t)h_2(t)^\frac12}{h_1^{-\frac{a}{2-a}}(t)}
= C\limsup_{t\to\infty}R(t),
\eeq
where
\beq\label{Rdef}
 R(t)=  h_2(t)^\frac12h_1(t)^\frac{a}{2-a} D(t).
\eeq

To estimate the last limit in \eqref{diffR}, we define,
referring to  \eqref{AB},  \eqref{G8}, and \eqref{Gsix},  the following numbers
\beqs
\tilde {\mathcal A}=\sum_{i=1}^2 \mathcal A[\Psi_i]=\sum_{i=1}^2\limsup_{t\to\infty} G[\Psi_i](t),\quad 
\tilde{\mathcal B}=\sum_{i=1}^2 \mathcal B[\Psi_i]=\sum_{i=1}^2\limsup_{t\to\infty}[(G[\Psi_i](t))']^-,
\eeqs
\beqs
\mathcal G_1=\sum_{i=1}^2 \limsup_{t\to\infty} G_1[\Psi_i](t),\quad 
\mathcal G_2=\sum_{i=1}^2 \limsup_{t\to\infty} G_2[\Psi_i](t).
\eeqs

The asymptotic behavior  of $\Phi(x,t)$ as $t\to\infty$ will be characterized by
\beqs
\mathcal D=\limsup_{t\to\infty}D(t).
\eeqs

Denote also that
\beqs
\kappa_0=\frac{a}{2-a}+\frac12=\frac{2+a}{2(2-a)}.
\eeqs
\begin{theorem}\label{theo53}
If $\tilde {\mathcal A}$ and $\mathcal G_1$ are finite, then 
\beq \label{limest}
\limsup_{t\to\infty}\| \bar{P}(t)\|_{L_\phi^2}^2 
\le C(\tilde {\mathcal A}^\frac{2}{2-a}+\mathcal G_1)^{\kappa_0}\mathcal D.
\eeq
\end{theorem}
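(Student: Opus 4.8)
The plan is to deduce \eqref{limest} from the asymptotic differential inequality \eqref{yprim} of Lemma \ref{lem61} together with the $\limsup$-estimates for $\int_U H(x,|\nabla p_i(x,t)|)\,dx$ and $\int_U \bar p_i^2(x,t)\phi(x)\,dx$ already established in Theorems \ref{graddest1} and \ref{thm32}. One may assume $\mathcal D<\infty$, since otherwise the right-hand side of \eqref{limest} is infinite and there is nothing to prove.

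First I would verify that $\limsup_{t\to\infty}h_1(t)<\infty$. By the definition \eqref{h1}, applying \eqref{gradpWone} with $p=p_i$, $\Psi=\Psi_i$ for $i=1,2$, summing, and using $\limsup(f_1+f_2)\le \limsup f_1+\limsup f_2$, one gets
\[
\limsup_{t\to\infty}h_1(t)\le B_1+C\sum_{i=1}^2\Bigl(\mathcal A[\Psi_i]^{\frac{2}{2-a}}+\limsup_{t\to\infty}G_1[\Psi_i](t)\Bigr).
\]
Since $x_1^{\frac{2}{2-a}}+x_2^{\frac{2}{2-a}}\le(x_1+x_2)^{\frac{2}{2-a}}$ (the exponent being $\ge 1$), the first sum is bounded by $\tilde{\mathcal A}^{\frac{2}{2-a}}$; and from the definition \eqref{G} one has $G[\Psi_i](t)\ge B_*\ge B_1$, so $B_1\le\tilde{\mathcal A}\le\tilde{\mathcal A}^{\frac{2}{2-a}}$ (note $\tilde{\mathcal A}\ge 2$ by \eqref{MA}). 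Hence $\limsup_{t\to\infty}h_1(t)\le C(\tilde{\mathcal A}^{\frac{2}{2-a}}+\mathcal G_1)<\infty$. In particular $h_1$ is bounded for large $t$, so $h_1(t)^{-\frac{a}{2-a}}$ is bounded below by a positive constant there, which forces $\int_0^\infty h_1(t)^{-\frac{a}{2-a}}\,dt=\infty$. This is precisely the hypothesis under which \eqref{diffR}, namely $\limsup_{t\to\infty}\|\bar P(t)\|_{L_\phi^2}^2\le C\limsup_{t\to\infty}R(t)$ with $R$ as in \eqref{Rdef}, holds.

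Next, by the same scheme, using \eqref{h2} together with \eqref{gradpWone} and \eqref{noinitial}, I would obtain $\limsup_{t\to\infty}h_2(t)\le C(\tilde{\mathcal A}^{\frac{2}{2-a}}+\mathcal G_1)$, the additive $1$ in \eqref{h2} and the $\bar p_i^2$-contributions being absorbed since $\tilde{\mathcal A}^{\frac{2}{2-a}}\ge 1$. Now $R(t)=h_2(t)^{\frac12}h_1(t)^{\frac{a}{2-a}}D(t)$ is a product of three nonnegative factors, so $\limsup$ is submultiplicative over it and commutes with the positive powers; combining the two bounds above with $\limsup_{t\to\infty}D(t)=\mathcal D$ gives
\[
\limsup_{t\to\infty}R(t)\le C(\tilde{\mathcal A}^{\frac{2}{2-a}}+\mathcal G_1)^{\frac12}\,(\tilde{\mathcal A}^{\frac{2}{2-a}}+\mathcal G_1)^{\frac{a}{2-a}}\,\mathcal D=C(\tilde{\mathcal A}^{\frac{2}{2-a}}+\mathcal G_1)^{\kappa_0}\mathcal D,
\]
since $\kappa_0=\frac{a}{2-a}+\frac12$. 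Feeding this into \eqref{diffR} yields \eqref{limest}.

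The only genuinely delicate point is the verification $\int_0^\infty h_1(t)^{-a/(2-a)}\,dt=\infty$ in the first step, as this is exactly what makes \eqref{diffR} applicable; it hinges entirely on the finiteness of $\limsup_{t\to\infty}h_1(t)$, hence on the standing hypotheses $\tilde{\mathcal A},\mathcal G_1<\infty$. Everything else is bookkeeping with the already-established asymptotic estimates and routine absorption of the constants $B_1$, the additive $1$ in $h_2$, and the $\bar p_i^2$-terms into $\tilde{\mathcal A}^{2/(2-a)}+\mathcal G_1$, using $\tilde{\mathcal A}\ge 2$ and the superadditivity of power functions with exponent at least one.
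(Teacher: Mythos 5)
Your proposal is correct and follows essentially the same route as the paper: bound $\limsup h_1$ and $\limsup h_2$ by $C(\tilde{\mathcal A}^{2/(2-a)}+\mathcal G_1)$ via \eqref{noinitial} and \eqref{gradpWone}, deduce $\int_0^\infty h_1^{-a/(2-a)}\,dt=\infty$ so that \eqref{diffR} applies, and then estimate $\limsup R(t)$ to obtain \eqref{limest}. Your extra care about $\mathcal D=\infty$ and about superadditivity of the power $\tfrac{2}{2-a}$ is harmless bookkeeping that the paper leaves implicit.
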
 
\begin{proof}
Note from \eqref{noinitial} and \eqref{gradpWone} that
\beq\label{limh1}
 \limsup_{t\to\infty} h_1(t), 
 \limsup_{t\to\infty} h_2(t)\le C (\tilde {\mathcal A}^\frac{2}{2-a} +\mathcal G_1)<\infty.
\eeq
Then  $h_1(t)$ and $h_2(t)$ are bounded on $[0,\infty)$.
Thus, $\int_0^\infty h_1(t)^{-\frac{a}{2-a}}dt=\infty$ and, consequently, estimate \eqref{diffR} holds.
By \eqref{Rdef} and  \eqref{limh1},
\beq\label{Rlim}
 \limsup_{t\to\infty} R(t) 
\le C(\tilde {\mathcal A}^\frac{2}{2-a} +\mathcal G_1)^\frac12 (\tilde {\mathcal A}^\frac{2}{2-a} +\mathcal G_1)^\frac{a}{2-a} \mathcal D=C(\tilde {\mathcal A}^\frac{2}{2-a}+\mathcal G_1)^{\kappa_0} \mathcal D.
\eeq
Therefore, \eqref{limest} follows this and \eqref{diffR}.
\end{proof}

Now, we focus on the case when the boundary data is unbounded as $t\to\infty$.

$\bullet$  If $t>0$ then, by \eqref{EstLtwo},
\beq\label{pM12}
\sum_{i=1}^2 \int_U \bar p_i^2(x,t)\phi(x) dx \le C\Big(\mathcal{\bar P}_0+\tilde {\mathcal M}(t)^\frac{2}{2-a}\Big).
\eeq

If $t\ge 1$ then, by \eqref{newgrad}, 
\beq\label{newgrad66}
\sum_{i=1}^2 \int_U H(x,|\nabla p_i(x,t)|)dx\le C\Big(\mathcal{\bar P}_0+\tilde {\mathcal M}(t)^\frac{2}{2-a} + \int_{t-1}^t \tilde G_1(\tau)d\tau\Big),
\eeq
and, by \eqref{G86},
\beq\label{G8666}
 \sum_{i=1}^2 \int_U \bar p_{i,t}^2(x,t)\phi(x) dx \le C\Big(\bar{\mathcal P}_0 + \tilde {\mathcal M}(t)^{\frac 2{2-a}}  +\int_{t-1}^t (\tilde G_1(\tau)+\tilde G_2(\tau))d\tau\Big).
\eeq

$\bullet$ In case  $\tilde{\mathcal B}<\infty$, then  ${\mathcal B}[\Psi_1]$ and ${\mathcal B}[\Psi_2]$ are finite. Using estimates \eqref{EstLtwo7}, \eqref{Gtwo33}, \eqref{Gthree11} for $p_i$ with $i=1,2$,  there is $T_0>0$ such that 
for $t>T_0$, one has
\beq\label{p12}
\sum_{i=1}^2 \int_U \bar p_i^2(x,t)\phi (x) dx \le C\Big(\tilde{\mathcal B}^{\frac 1{1-a}}+ \tilde G(t)^{\frac 2{2-a}}\Big),
\eeq
\beq\label{Gtwo3366}
\sum_{i=1}^2 \int_U H(x,|\nabla p_i(x,t)|)dx\le  C\Big(\tilde{\mathcal B}^{\frac 1{1-a}}+ \tilde G(t)^{\frac 2{2-a}}+\int_{t-1}^t \tilde G_1(\tau)d\tau\Big),
\eeq
 \beq\label{Gthree1166}
\sum_{i=1}^2 \int_U \bar p_{i,t}^2(x,t)\phi(x) dx \le C\Big(\tilde{\mathcal B}^{\frac 1{1-a}}+ \tilde G(t)^{\frac 2{2-a}} +\int_{t-1}^t (\tilde G_1(\tau)+\tilde G_2(\tau))d\tau\Big).
\eeq

$\bullet$ Assume $\tilde {\mathcal A}=\infty$.
Then $\lim_{t\to\infty}\tilde {\mathcal M}(t)=\infty$, and for sufficiently large $t$, one has $\tilde {\mathcal M}(t)\ge \bar{\mathcal P}_0$.

From \eqref{newgrad66}, \eqref{G8666}, and \eqref{Gtwo3366}, \eqref{Gthree1166}, we have  for large $t$ that 
\beq\label{ptHV}
\sum_{i=1}^2 \int_U H(x,|\nabla p_i(x,t)|)dx,\
\sum_{i=1}^2 \int_U \bar p_{i,t}^2(x,t)\phi(x) dx  \le CV(t),
\eeq
where
\beq\label{Vdef}
V(t)= \begin{cases}
\tilde {\mathcal M}(t)^{\frac 2{2-a}} +\int_{t-1}^t (\tilde G_1(\tau)+\tilde G_2(\tau)) d\tau&\text{in general,}\\
\\
\tilde{\mathcal B}^{\frac 1{1-a}}+ \tilde G(t)^{\frac 2{2-a}} +\int_{t-1}^t (\tilde G_1(\tau)+\tilde G_2(\tau))d\tau& \text{when }\tilde{\mathcal B}<\infty.
\end{cases}
\eeq

With the above preparations, we are ready to estimate $\| \bar{P}(t)\|_{L_\phi^2}$ as $t\to\infty$ in the case $\tilde {\mathcal A}=\infty$ . 

\begin{theorem}\label{theo54}
Assume $\tilde {\mathcal A}=\infty$. If $\int_1^\infty V(t)^{-\frac{a}{2-a}}dt=\infty$, then 
\beq\label{limP}
\limsup_{t\to\infty} \| \bar{P}(t)\|_{L_\phi^2}^2 \le  C\limsup_{t\to\infty} \Big( V(t)^{\kappa_0} D(t)\Big).
\eeq
\end{theorem}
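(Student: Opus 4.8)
The plan is to imitate the proof of Theorem~\ref{theo53}, but with the coefficients $h_1(t)$ and $h_2(t)$ in the differential inequality \eqref{r2}---bounded there---now controlled by the (possibly unbounded) majorant $V(t)$ of \eqref{Vdef}. Writing $y(t)=\int_U\bar P^2(x,t)\phi(x)\,dx$, the inequality \eqref{r2} in the form \eqref{yprim} reads
\beqs
y'(t)\le -d_4\,h_1(t)^{-\frac a{2-a}}\,y(t)+C_0\,D(t)\,h_2(t)^{\frac12}.
\eeqs

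First I would establish that $h_1(t)\le CV(t)$ and $h_2(t)\le CV(t)$ for all sufficiently large $t$. Since $\tilde{\mathcal A}=\infty$ we have $\tilde{\mathcal M}(t)\to\infty$, so eventually $\tilde{\mathcal M}(t)^{2/(2-a)}\ge\bar{\mathcal P}_0$; combining this with \eqref{pM12} gives $\sum_{i=1}^2\int_U\bar p_i^2(x,t)\phi(x)\,dx\le CV(t)$ for large $t$. The estimate \eqref{ptHV}---valid for large $t$ precisely because $\tilde{\mathcal A}=\infty$, in the form \eqref{newgrad66} in general and \eqref{Gtwo3366} when $\tilde{\mathcal B}<\infty$---gives $\sum_{i=1}^2\int_U H(x,|\nabla p_i(x,t)|)\,dx\le CV(t)$. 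Finally, since each $\mathcal M[\Psi_i](t)\ge1$ and each $G[\Psi_i](t)\ge1$ by \eqref{MA} and \eqref{Gge1}, both branches of \eqref{Vdef} force $V(t)\ge1$; hence the constant $B_1$ in the definition \eqref{h1} of $h_1$ and the constant $1$ in \eqref{h2} are absorbed into $V(t)$, yielding $h_1(t)\le CV(t)$ and $h_2(t)\le CV(t)$ for $t$ large.

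Plugging these bounds into \eqref{yprim} and using $y(t)\ge0$ and $D(t)\ge0$, one obtains, for $t$ beyond some $T_1\ge1$ and suitable $c_1,c_2>0$,
\beqs
y'(t)\le -c_1\,V(t)^{-\frac a{2-a}}\,y(t)+c_2\,D(t)\,V(t)^{\frac12}.
\eeqs
The hypothesis $\int_1^\infty V(t)^{-a/(2-a)}\,dt=\infty$ gives $\int_{T_1}^\infty V(t)^{-a/(2-a)}\,dt=\infty$, so Lemma~\ref{ODE3-lem}(ii), applied on $[T_1,\infty)$ exactly as in the derivation of \eqref{diffR}, yields
\beqs
\limsup_{t\to\infty}y(t)\le C\limsup_{t\to\infty}\frac{D(t)\,V(t)^{1/2}}{V(t)^{-a/(2-a)}}=C\limsup_{t\to\infty}\Big(D(t)\,V(t)^{\frac12+\frac a{2-a}}\Big)=C\limsup_{t\to\infty}\Big(D(t)\,V(t)^{\kappa_0}\Big),
\eeqs
which is \eqref{limP}; if the final $\limsup$ is $+\infty$ there is nothing to prove.

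I expect the only genuinely delicate step to be the uniformization in the second paragraph: both \eqref{ptHV} and \eqref{pM12} are asserted merely ``for large $t$'' and come in two shapes according to whether $\tilde{\mathcal B}$ is finite, so one must check that the single bound $h_1,h_2\le CV$ holds from a fixed time on. This is bookkeeping, since $V(t)\ge1$ absorbs the additive constants and the two branches of \eqref{Vdef} are handled identically; everything else is a verbatim transcription of the proof of Theorem~\ref{theo53} with $h_1,h_2$ replaced by $V$.
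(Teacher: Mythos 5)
Your proposal is correct and follows essentially the same route as the paper: both rest on the bound $h_1(t),h_2(t)\le CV(t)$ for large $t$ (the paper's \eqref{hV}, obtained from \eqref{pM12}, \eqref{newgrad66} or \eqref{p12}, \eqref{Gtwo3366}) together with Lemma \ref{ODE3-lem}(ii) applied to \eqref{yprim}. The only cosmetic difference is that you substitute $V$ into the differential inequality before invoking the lemma on $[T_1,\infty)$, while the paper invokes the already-derived limit estimate \eqref{diffR} and replaces $h_1,h_2$ by $V$ afterwards; the two are equivalent.
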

\begin{proof}
By \eqref{p12} and  \eqref{Gtwo3366}, or \eqref{pM12} and \eqref{newgrad66},  we have  for large $t$
\beq\label{hV} 
h_2(t),  h_1(t)\le CV(t).
\eeq
Combining this with \eqref{diffR}, we have 
\begin{align*}
\limsup_{t\to\infty} \| \bar{P}(t)\|_{L_\phi^2}^2  
&\le C\limsup_{t\to\infty}( h_2(t)^\frac12h_1(t)^\frac{a}{2-a}D(t)) 
\le C\limsup_{t\to\infty} ( V(t)^\frac12 V(t)^\frac{a}{2-a}D(t))\\
&= C\limsup_{t\to\infty} ( V(t)^{\kappa_0}D(t)).
\end{align*}
This proves \eqref{limP}.
\end{proof}

The estimate \eqref{limP} can be  interpreted as follows. As $t\to\infty$, even though $V(t)\to\infty$, if the boundary data's difference characterized by $D(t)$ decays  very fast, it can diminish the growth of $V(t)$ and result in $\| \bar{P}(t)\|_{L_\phi^2}$ going to zero. 

Now, we turn to the continuous dependence for the pressure gradient. What the results obtained below mean for $\| \nabla \bar{P}(t)\|_{L_{W_1}^{2-a}}$ are the same as Theorems \ref{diffestint}, \ref{theo53}, and \ref{theo54} for $\| \bar{P}(t) \|_{L_\phi^2}$.

\begin{theorem}\label{theo55}
Let $t_0\in(0,1]$. For $t\ge t_0>0$,
\begin{multline}\label{keygrad0}
\| \nabla \bar{P}(t)\|_{L_{W_1}^{2-a}}^2 
\le C {\mathcal M}_2(t)^{\kappa_0}
\Big( e^{-d_4\int_0^t {\mathcal M}_1(\tau)^{-\frac{a}{2-a}}d\tau} \| \bar{P}(0) \|_{L_\phi^2}^2+ D(t)^2\\
+ \int_0^t e^{-d_4\int_s^t {\mathcal M}_1(\tau)^{-\frac{a}{2-a}}d\tau}{\mathcal M}_1(s)^\frac12 D(s) \,ds \Big)^\frac 12,
\end{multline}
where
\beqs
{\mathcal M}_2(t)=t_0^{-1} ( \mathcal{H}_0+\bar{\mathcal{P}}_0)+ \tilde {\mathcal M}(t)^\frac{2}{2-a} +\sup_{\tau\in [0,t]} (\tilde G_1(\tau)+ \tilde G_2(\tau)).
\eeqs
Moreover,
\beq\label{limdg}
\limsup_{t\to\infty} \| \nabla \bar{P}(t)\|_{L_{W_1}^{2-a}}^2
 \le C\Big[(\tilde {\mathcal A}^\frac2{2-a}+\mathcal G_1+\mathcal G_2)^{3\kappa_0} \mathcal D\Big]^\frac12
+C(\tilde {\mathcal A}^\frac{2}{2-a}+\mathcal G_1)^{\kappa_0}\mathcal D.
\eeq
\end{theorem}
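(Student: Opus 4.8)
The target, Theorem~\ref{theo55}, is the gradient analogue of the $L_\phi^2$ continuous-dependence results, so I would follow the same mechanism used to pass from pressure estimates to gradient estimates in Section~\ref{GradSec}: integrate the basic differential inequality for the gradient over a short time window, and bound the right-hand side by the already-known $L_\phi^2$-estimates for $\bar P$. The starting point is inequality~\eqref{r1} of Lemma~\ref{lem61}, which controls $\frac{d}{dt}\int_U \bar P^2\phi\,dx$ from above by a negative multiple of $h_1(t)^{-a/(2-a)}\bigl(\int_U W_1|\nabla\bar P|^{2-a}\bigr)^{2/(2-a)}$ plus $C D(t) h_2(t)^{1/2}$. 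Rearranging, this says that the ``dissipation term'' $h_1(t)^{-a/(2-a)}\|\nabla\bar P(t)\|_{L^{2-a}_{W_1}}^4$ (note the power: the parenthesis raised to $2/(2-a)$ is $\|\nabla\bar P\|_{L^{2-a}_{W_1}}^{4/(2-a)}$, so one must be careful about exponents here) is bounded by $-\frac{d}{dt}\int_U\bar P^2\phi\,dx + CD(t)h_2(t)^{1/2}$.

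**Key steps, in order.** First I would fix $t\ge t_0>0$ and integrate~\eqref{r1} over a window $[s,t]$ with $s\in[t-t_0,t]\cap[0,t]$ (or simply $[t_*,t]$ for a suitably chosen $t_*$ as in the proof of the $t$-derivative theorem). This gives
\[
d_3\int_s^t h_1(\tau)^{-\frac{a}{2-a}}\Bigl(\int_U W_1|\nabla\bar P(\tau)|^{2-a}dx\Bigr)^{\frac{2}{2-a}}d\tau
\le \int_U\bar P^2(x,s)\phi\,dx + C\int_s^t D(\tau)h_2(\tau)^{\frac12}d\tau.
\]
Then, using a mean-value / pigeonhole argument over the window, I extract a time $\tau_*$ where $\bigl(\int_U W_1|\nabla\bar P(\tau_*)|^{2-a}\bigr)^{2/(2-a)}$ is controlled, and then I need a differential inequality that propagates the gradient bound forward from $\tau_*$ to $t$ — this is where I would either reuse~\eqref{dH}-type reasoning applied to $\bar P$ or, more directly, observe that Corollary~\ref{cor35} already gives pointwise-in-time bounds on $\|\nabla p_i(t)\|_{L^{2-a}_{W_1}}^{2-a}$, hence on $h_1(t)$ and $h_2(t)$, reducing everything to the already-proven $\| \bar P(t)\|_{L^2_\phi}$ estimates of Theorem~\ref{diffestint}. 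The cleanest route: combine the integrated form of~\eqref{r1} with the substitution $h_1(\tau), h_2(\tau)\le C{\mathcal M}_1(\tau)$ from~\eqref{h1e}--\eqref{h4e} (or $\le CV(\tau)$, $\le C{\mathcal M}_2(\tau)$ in the unbounded regime) to pull $h_1,h_2$ out as $\sup_{[0,t]}$ constants, then bound the remaining $\int\bar P^2\phi$ term and the $\int D h_2^{1/2}$ term by Theorem~\ref{diffestint}'s estimate~\eqref{yhdfive}. Finally, raise to the appropriate power: since~\eqref{r1} delivers $\|\nabla\bar P\|^{4/(2-a)}$ times $h_1^{-a/(2-a)}$ against something like $\|\bar P(0)\|^2 + \int {\mathcal M}_1 D$, taking a square root after multiplying through by $h_1^{a/(2-a)}\sim {\mathcal M}_1^{a/(2-a)}$ and absorbing an extra ${\mathcal M}_2^{1/2}$ from $h_2^{1/2}$ produces exactly the exponent $\kappa_0=\frac{a}{2-a}+\frac12$ on ${\mathcal M}_2(t)$ in~\eqref{keygrad0}, and the two-piece bound on $\| \nabla\bar P\|^2$ (one term of order $({\cdots})^{3\kappa_0/2}\mathcal D^{1/2}$, one of order $({\cdots})^{\kappa_0}\mathcal D$) in~\eqref{limdg}. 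The $3\kappa_0$ comes from a power $(1/2)\cdot(2\kappa_0 + \kappa_0)$-type bookkeeping: a factor $h_1^{a/(2-a)}h_2^{1/2}\sim(\cdots)^{\kappa_0}$ from the gradient inequality itself, times a factor $(\cdots)^{\kappa_0}$ absorbed inside the $D(t)$-growth term from~\eqref{limP}/\eqref{Rlim}, all under a square root.

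**For the limsup estimate~\eqref{limdg}.** I would split according to whether $\tilde{\mathcal A}<\infty$ or $=\infty$, exactly paralleling Theorems~\ref{theo53} and~\ref{theo54}. When $\tilde{\mathcal A}$ and $\mathcal G_1,\mathcal G_2$ are all finite, \eqref{limh1} gives $\limsup h_1,\limsup h_2\le C(\tilde{\mathcal A}^{2/(2-a)}+\mathcal G_1+\mathcal G_2)$; I then take $\limsup$ of the integrated form of~\eqref{r1}, use the elementary fact (as in Lemma~\ref{ODE3-lem}(ii) reasoning) that the time-averaged dissipation is controlled by $\limsup$ of the forcing $D(t)h_2(t)^{1/2}$ plus $\limsup\|\bar P\|^2_{L^2_\phi}$ from Theorem~\ref{theo53}, and read off both terms of~\eqref{limdg}. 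The second term $C(\tilde{\mathcal A}^{2/(2-a)}+\mathcal G_1)^{\kappa_0}\mathcal D$ is literally inherited from~\eqref{limest} (the $\int\bar P^2\phi$ contribution), while the first term, with its $3\kappa_0$ exponent and square root, comes from the genuinely-gradient part of the estimate where one pays an extra $h_1^{a/(2-a)}h_2^{1/2}$ and then square-roots.

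**Main obstacle.** The bookkeeping of exponents is the real work here — in particular correctly tracking that the parenthesis $\bigl(\int W_1|\nabla\bar P|^{2-a}\bigr)^{2/(2-a)}$ equals $\|\nabla\bar P\|_{L^{2-a}_{W_1}}^{4/(2-a)}$, so that after multiplying by $h_1^{a/(2-a)}$ and square-rooting one lands on $\|\nabla\bar P\|^{2/(2-a)}_{L^{2-a}_{W_1}}$, not $\|\nabla\bar P\|^2$, and then reconciling this with the stated left-hand side $\| \nabla\bar P(t)\|^2_{L^{2-a}_{W_1}}$ of~\eqref{keygrad0}. I expect this is handled by an application of~\eqref{ee5} or a Young-type inequality to trade the exponent $2/(2-a)$ for $2$ at the cost of absorbing more powers of ${\mathcal M}_2(t)$ (which are harmless since ${\mathcal M}_2\ge 1$), and getting this trade-off to land exactly on $\kappa_0$ and $3\kappa_0$ is the delicate point. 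The rest — choosing the mean-value time, invoking Gronwall, substituting the prior theorems — is routine once the exponent arithmetic is pinned down.
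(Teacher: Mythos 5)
There is a genuine gap, and it starts with an exponent misreading that shapes your whole plan. In \eqref{r1} the quantity $\bigl(\int_U W_1|\nabla\bar P|^{2-a}dx\bigr)^{\frac{2}{2-a}}$ is exactly $\|\nabla\bar P\|_{L^{2-a}_{W_1}}^{2}$, not $\|\nabla\bar P\|_{L^{2-a}_{W_1}}^{4/(2-a)}$: the integral itself is the norm to the power $2-a$, so raising it to $\frac{2}{2-a}$ gives the square of the norm. Hence the ``main obstacle'' you flag (trading exponent $2/(2-a)$ for $2$ via Young/\eqref{ee5}) does not exist, and you leave it unresolved anyway (``I expect this is handled by\dots''), so the exponent bookkeeping that produces $\kappa_0$ and $3\kappa_0$ is never actually carried out.

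More seriously, the mechanism you propose cannot produce the pointwise-in-time bound \eqref{keygrad0}. Integrating \eqref{r1} over a window and using a mean-value time $\tau_*$ only controls the time-\emph{averaged} dissipation; to get $\|\nabla\bar P(t)\|_{L^{2-a}_{W_1}}$ at the terminal time you would need a differential inequality propagating this quantity forward, and no \eqref{dH}-type inequality exists for the \emph{difference} $\bar P$ (the uniform-Gronwall structure in Section \ref{GradSec} comes from testing the equation for a single solution with $\bar p_t$, which has no analogue for $p_1-p_2$). Your fallback, Corollary \ref{cor35}, bounds $\|\nabla p_i\|_{L^{2-a}_{W_1}}$ for each solution separately — that is how the paper bounds $h_1,h_2$ — but it gives no smallness of $\nabla\bar P$ in terms of $\|\bar P(0)\|_{L^2_\phi}$ and $D$, i.e.\ no continuous dependence. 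The paper's actual route is different and is the missing idea: multiply \eqref{r1} by $d_3^{-1}h_1(t)^{a/(2-a)}$ so the left side becomes $\|\nabla\bar P(t)\|_{L^{2-a}_{W_1}}^{2}$ pointwise, then handle the term $-\frac{d}{dt}\int_U\bar P^2\phi\,dx=-2\int_U\bar P\,\bar P_t\,\phi\,dx$ by Cauchy--Schwarz as $C h_1^{a/(2-a)}\|\bar P_t\|_{L^2_\phi}\|\bar P\|_{L^2_\phi}$, and bound $\|\bar P_t\|_{L^2_\phi}\le\|\bar p_{1,t}\|_{L^2_\phi}+\|\bar p_{2,t}\|_{L^2_\phi}$ using the time-derivative estimates \eqref{greatert0} and \eqref{limsupq1} of Section \ref{ptsec}, together with \eqref{yhdfive} for $\|\bar P\|_{L^2_\phi}$ and \eqref{h1e}--\eqref{h4e} for $h_1,h_2$. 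That is precisely where the $t_0^{-1}(\mathcal H_0+\bar{\mathcal P}_0)$ term and $\tilde G_2$ in $\mathcal M_2$, and the $\mathcal G_2$ in the first term of \eqref{limdg}, come from; your proposal never invokes the time-derivative estimates and so cannot account for the stated form of $\mathcal M_2$ or for the split in \eqref{limdg}, whose first term arises from $h_1^{a/(2-a)}\|\bar P_t\|\,\|\bar P\|$ (hence the $\mathcal G_2$ and the square root) and whose second term is $R(t)$ via \eqref{Rlim}, not the conflated ``$h_1^{a/(2-a)}h_2^{1/2}$ from the gradient inequality'' bookkeeping you describe.
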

\begin{proof}
Multiplying  \eqref{r1} by $d_3^{-1} h_1(t)^{\frac{a}{2-a}}$, we have
\begin{align*}
 \left(\int_U W_1(x) |\nabla \bar P(x,t)|^{2-a}dx\right)^\frac 2 {2-a} 
&\le -d_3^{-1}h_1(t)^{\frac{a}{2-a}} \frac d{dt}\int_U \bar P^2(x,t) \phi(x) dx +Ch_1(t)^{\frac{a}{2-a}}D(t)h_2(t)^\frac12\\
&\le Ch_1(t)^{\frac{a}{2-a}}\int_U |\bar P||\bar P_t|\phi dx +CR(t)\\
&\le Ch_1(t)^{\frac{a}{2-a}} \|\bar P_t\|_{L^2_\phi}\|\bar P\|_{L^2_\phi}+CR(t).
\end{align*}
%
%
Applying  triangle inequality to $ \|\bar P_t\|_{L^2_\phi}$ gives
\beq\label{keygrad}
\| \nabla \bar{P}(t)\|_{L_{W_1}^{2-a}}^2
\le Ch_1(t)^{\frac{a}{2-a}}(\|\bar p_{1,t}\|_{L^2_\phi}+\|\bar p_{2,t}\|_{L^2_\phi})\|\bar P\|_{L^2_\phi}+CR(t).
\eeq

By \eqref{Rdef}, \eqref{h1e} and \eqref{h4e},
\beq\label{RV}
R(t)\le C{\mathcal M}_1(t)^\frac12 {\mathcal M}_1(t)^{\frac{a}{2-a}}D(t)=C {\mathcal M}_1(t)^{\kappa_0} D(t).
\eeq

By \eqref{greatert0},  
\beq\label{q1122}
\begin{aligned}
&\|\bar p_{1,t}\|_{L^2_\phi(U)}+\|\bar p_{2,t}\|_{L^2_\phi(U)}
\le C\Big( t_0^{-1} \int_0^{t_0}  \tilde G_1(\tau)d\tau+t_0^{-1} ( \mathcal{H}_0+\bar{\mathcal{P}}_0)\\
&\quad + \tilde {\mathcal M}(t)^\frac{2}{2-a} + \int_{0}^t e^{-\frac14(t-\tau)} [\tilde G_2(\tau)+ \tilde G_1(\tau)]d\tau\Big)^\frac 12
\le C{\mathcal M}_2(t)^\frac12.
\end{aligned}
\eeq

Combining estimates  \eqref{h1e},  \eqref{q1122},  \eqref{yhdfive}, \eqref{RV} with \eqref{keygrad} yields
\begin{multline*}
\| \nabla \bar{P}(t)\|_{L_{W_1}^{2-a}}^2 \le 
C{\mathcal M}_1(t)^{\frac{a}{2-a}}{\mathcal M}_2(t)^\frac12
\cdot\Big \{e^{-d_4\int_0^t {\mathcal M}_1(s)^{-\frac{a}{2-a}}ds} \| \bar{P}(0)\|_{L_\phi^2}^2\\
+ \int_0^t e^{-d_4\int_s^t {\mathcal M}_1(\tau)^{-\frac{a}{2-a}}d\tau}{\mathcal M}_1(s)^\frac12D(s) \,ds \Big \}^\frac 12
+C {\mathcal M}_1(t)^{\kappa_0}D(t).
\end{multline*}
Estimating the first and last ${\mathcal M}_1(t)$ terms on the right-hand side by ${\mathcal M}_1(t)\le {\mathcal M}_2(t)$,
we obtain  \eqref{keygrad0}.

Let $\mathcal D_0=(\tilde {\mathcal A}^\frac{2}{2-a}+\mathcal G_1)^{\kappa_0} \mathcal D$.
Taking limit superior of \eqref{keygrad}, and using the limit estimates \eqref{limh1}, \eqref{limest},  \eqref{Rlim} and  \eqref{limsupq1}, we have 
\begin{align*}
&\limsup_{t\to\infty}\| \nabla \bar{P}(t)\|_{L_{W_1}^{2-a}}^2
\le C(\tilde {\mathcal A}^\frac2{2-a}+\mathcal G_1)^\frac a{2-a} (\tilde {\mathcal A}^\frac2{2-a}+\mathcal G_1+\mathcal G_2)^\frac12  \mathcal D_0^\frac12 +C\mathcal D_0\\
&\le C(\tilde {\mathcal A}^\frac2{2-a}+\mathcal G_1+\mathcal G_2)^\frac{3\kappa_0}{2}  \mathcal D^\frac12
+C(\tilde {\mathcal A}^\frac{2}{2-a}+\mathcal G_1)^{\kappa_0}\mathcal D,
\end{align*}
hence obtaining \eqref{limdg}.
\end{proof}

Finally, we derive the gradient estimates for the case $\tilde{\mathcal A}=\infty$.

\begin{theorem}
Assume $\tilde{\mathcal A}=\infty$. Let $V(t) $ be defined by \eqref{Vdef}.
Suppose 
\beq \label{Vcond}
\int_1^\infty V^{-\frac{a}{2-a}}(t)dt=\infty\quad\text{and}\quad \lim_{t\to\infty}(V^\frac{a}{2-a}(t))'=0.
\eeq
Then
\beq\label{limgP}
\limsup_{t\to\infty} \| \nabla \bar{P}(t)\|_{L_{W_1}^{2-a}}^2
 \le C \limsup_{t\to\infty}\Big[V(t)^{3\kappa_0}D(t)\Big]^\frac12
+ C\limsup_{t\to\infty}\Big[V(t)^{\kappa_0}D(t)\Big].
\eeq
\end{theorem}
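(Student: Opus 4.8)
The plan is to mimic the argument of Theorem \ref{theo55}, but replace the bounds on $h_1(t)$, $h_2(t)$, $\|\bar p_{i,t}\|_{L^2_\phi}$ and $R(t)$ in terms of $\mathcal M_1(t)$, $\mathcal M_2(t)$ by the corresponding bounds in terms of $V(t)$ valid for large $t$ in the regime $\tilde{\mathcal A}=\infty$. First I would start from inequality \eqref{keygrad}, which holds for all $t$ and expresses $\|\nabla\bar P(t)\|_{L_{W_1}^{2-a}}^2$ in terms of $h_1(t)^{a/(2-a)}(\|\bar p_{1,t}\|_{L^2_\phi}+\|\bar p_{2,t}\|_{L^2_\phi})\|\bar P\|_{L^2_\phi}$ and $R(t)$. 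The point is that all the quantities appearing on its right-hand side have already been controlled by $V(t)$ for large $t$: we have $h_1(t),h_2(t)\le CV(t)$ from \eqref{hV}, and $\sum_i\|\bar p_{i,t}\|_{L^2_\phi}^2\le CV(t)$ from \eqref{ptHV}–\eqref{Vdef}; recalling $R(t)=h_2(t)^{1/2}h_1(t)^{a/(2-a)}D(t)$ from \eqref{Rdef}, we get $R(t)\le CV(t)^{\kappa_0}D(t)$ for large $t$.

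Next I would take the limit superior as $t\to\infty$ in \eqref{keygrad}. For the first term, insert the bound $h_1(t)^{a/(2-a)}\le CV(t)^{a/(2-a)}$, the bound $(\|\bar p_{1,t}\|_{L^2_\phi}+\|\bar p_{2,t}\|_{L^2_\phi})\le CV(t)^{1/2}$, and the $L^2_\phi$-estimate for $\bar P$ itself: Theorem \ref{theo54} gives $\limsup_{t\to\infty}\|\bar P(t)\|_{L^2_\phi}^2\le C\limsup_{t\to\infty}(V(t)^{\kappa_0}D(t))$ under the standing hypothesis $\int_1^\infty V^{-a/(2-a)}(t)dt=\infty$. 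This produces a term of the form $V(t)^{a/(2-a)}\cdot V(t)^{1/2}\cdot\bigl(V(t)^{\kappa_0}D(t)\bigr)^{1/2}$, and since $\frac{a}{2-a}+\frac12+\frac{\kappa_0}{2}=\kappa_0+\frac{\kappa_0}{2}=\frac{3\kappa_0}{2}$, this matches the exponent $3\kappa_0$ under the square root in \eqref{limgP}: the term is $\le C\bigl(V(t)^{3\kappa_0}D(t)\bigr)^{1/2}$. The second term of \eqref{keygrad} is just $CR(t)\le CV(t)^{\kappa_0}D(t)$, giving the second term of \eqref{limgP}.

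One technical care point is that $V(t)$ need not be monotone, so applying the $L^2_\phi$-bound from Theorem \ref{theo54} inside a product with other $V$-powers requires that the various $\limsup$'s combine correctly; since all factors are nonnegative and we only ever bound a product from above by the product of the individual upper envelopes $CV(t)^{(\cdot)}$, this is legitimate without needing a genuine $\limsup$ of a product to split. The role of the extra hypothesis $\lim_{t\to\infty}(V^{a/(2-a)}(t))'=0$ in \eqref{Vcond} is precisely what is needed to invoke Theorem \ref{theo54} (via the condition $\int_1^\infty V^{-a/(2-a)}dt=\infty$) together with a Gronwall/uniform-Gronwall argument behind \eqref{diffR} in which the slowly-varying weight $h_1^{-a/(2-a)}\asymp V^{-a/(2-a)}$ must not oscillate too fast; so the main obstacle is not a new estimate but verifying that the asymptotic divisions performed in \eqref{diffR} and in multiplying \eqref{r1} by $h_1^{a/(2-a)}$ remain valid when $h_1$ is only controlled by $V$ for large $t$ rather than globally, which is handled by discarding the initial interval and using that $h_1,h_2$ are bounded on compact sets by the regularity assumptions on the solutions.
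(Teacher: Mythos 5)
Your first step (using \eqref{keygrad} together with \eqref{hV} and \eqref{ptHV} to reduce everything to powers of $V(t)$, $D(t)$ and $\|\bar P(t)\|_{L^2_\phi}$) matches the paper, but the way you then dispose of the factor $\|\bar P(t)\|_{L^2_\phi}$ has a genuine gap. Theorem \ref{theo54} only gives the \emph{limit superior} bound $\limsup_{t\to\infty}\|\bar P(t)\|_{L^2_\phi}^2\le C\limsup_{t\to\infty}\bigl(V(t)^{\kappa_0}D(t)\bigr)$; it does not give a pointwise envelope $\|\bar P(t)\|_{L^2_\phi}^2\le C\,V(t)^{\kappa_0}D(t)$ for large $t$, which is what your computation "$V(t)^{a/(2-a)}\cdot V(t)^{1/2}\cdot (V(t)^{\kappa_0}D(t))^{1/2}$" tacitly uses. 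Since $\tilde{\mathcal A}=\infty$ forces $V(t)$ to be unbounded, you cannot instead split the limit superior of the product: $\limsup_t\bigl(V(t)^{2\kappa_0}\|\bar P(t)\|_{L^2_\phi}^2\bigr)\le\bigl(\limsup_t V(t)^{2\kappa_0}\bigr)\cdot\bigl(\limsup_t\|\bar P(t)\|_{L^2_\phi}^2\bigr)$ is of the form $\infty\cdot(\text{finite or }\infty)$ and yields nothing. So the first term on the right of \eqref{limgP} is not reached by your argument; your remark that "we only ever bound a product from above by the product of the individual upper envelopes" is exactly the step that is unavailable, because $\|\bar P(t)\|_{L^2_\phi}$ has no such envelope.

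The paper closes precisely this gap with Lemma \ref{multdif}: setting $y(t)=\|\bar P(t)\|_{L^2_\phi}^2$, inequality \eqref{yprim} together with \eqref{hV} gives $y'(t)\le -C_1V(t)^{-\frac{a}{2-a}}y(t)+C_2D(t)V(t)^{1/2}$ for large $t$, and Lemma \ref{multdif} applied with $h=C_1V^{-\frac{a}{2-a}}$, $f=C_2DV^{1/2}$ and the weight $g=V^{2\kappa_0}$ bounds $\limsup_t\bigl(V(t)^{2\kappa_0}y(t)\bigr)$ directly by $C\limsup_t\bigl(V(t)^{3\kappa_0}D(t)\bigr)$, keeping the product inside a single limit superior. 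This is also where the second hypothesis of \eqref{Vcond} is genuinely used: $\lim_{t\to\infty}(V^{a/(2-a)})'(t)=0$ is exactly what verifies the condition $g'/(gh)\to 0$ in \eqref{A2cond}. Your proposal assigns that hypothesis only a vague role ("the slowly-varying weight must not oscillate too fast" behind \eqref{diffR}), whereas Theorem \ref{theo54} needs only the divergence of $\int V^{-a/(2-a)}dt$; without invoking Lemma \ref{multdif} (or an equivalent weighted Gronwall-type limit argument) the second condition is never used and the term $C\limsup_t[V(t)^{3\kappa_0}D(t)]^{1/2}$ in \eqref{limgP} is not established.
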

\begin{proof}
By \eqref{keygrad}, \eqref{hV}, \eqref{ptHV} and \eqref{Rdef}, we have  for large $t$ that
\beqs
\| \nabla \bar{P}(t)\|_{L_{W_1}^{2-a}}^2 \le CV(t)^{\kappa_0}\|\bar P\|_{L^2_\phi}+CV(t)^{\kappa_0}D(t).
\eeqs
Taking limit superior of the previous  inequality  yields 
\beq\label{kgss}
\limsup_{t\to\infty}\| \nabla \bar{P}(t)\|_{L_{W_1}^{2-a}}^2 \le C\limsup_{t\to\infty}\left( V(t)^{2\kappa_0}\int_U|\bar P|^2\phi dx \right)^\frac{1}{2}+C\limsup_{t\to\infty}(V(t)^{\kappa_0}D(t)).
\eeq

Consider first  limit on the right-hand side of the \eqref{kgss}. 
Let $y(t)=\int_U|\bar P(x,t)|^2\phi(x) dx$.
By  \eqref{yprim} and \eqref{hV},  we have for large $t$ that
\beqs
y'(t) \le -C_1 V(t)^{-\frac{a}{2-a}} y(t) + C_2D(t)V(t)^\frac12.
\eeqs

We apply  Lemma \ref{multdif} to $h(t)=C_1V^{-\frac{a}{2-a}}(t)$, $f(t)=C_2D(t)V(t)^\frac12$, and $g(t)=V^{2\kappa_0}(t)$, noticing that condition \eqref{A2cond} is met thanks to \eqref{Vcond}.
It follows that
\beqs
\limsup_{t\to\infty}\Big(V(t)^{2\kappa_0}y(t)\Big) 
\le C \limsup_{t\to\infty}(V(t)^{2\kappa_0} V(t)^\frac{a}{2-a} V(t)^\frac12 D(t))
=C \limsup_{t\to\infty}(V(t)^{3\kappa_0}D(t)).
\eeqs
Then inequality \eqref{limgP} follows  this and \eqref{kgss}.
\end{proof}

\textbf{Acknowledgement.}  The authors would like to thank N.~C.~Phuc for helpful discussions. L.~H. acknowledges the support by NSF grant DMS-1412796.


 \appendix
 
  
 \section{Appendix} \label{append}

 We collect here some useful lemmas on solutions of differential inequalities.

\begin{lemma}[c.f. \cite{HIKS1}, Lemma A.1]\label{ODE3-lem}
 Let $\phi$ be a continuous, strictly increasing function from $[0,\infty)$ onto $[0,\infty)$.
 Suppose $y(t)\ge 0$ is a continuous function on $[0,\infty)$ such that
 \beqs
 y'(t)\le -h(t)\phi^{-1}(y(t)) +f(t)\quad \forall t>0,
 \eeqs
  where $h(t)>0$, $f(t)\ge 0$  are continuous functions on $[0,\infty)$.
\begin{enumerate}
\item[\rm (i)] If ${\mathcal M}(t)$ is an increasing, continuous function on $[0,\infty)$ that satisfies ${\mathcal M}(t)\ge f(t)/h(t)$ for all $t\ge 0$,   then
  \beqs
  y(t)\le y(0)+\phi({\mathcal M}(t)) \quad \forall t\ge 0.
  \eeqs

\item[\rm (ii)] If $\int_0^\infty h(\tau)d\tau=\infty$ then
  \beqs
  \limsup_{t\rightarrow\infty} y(t)\le \phi\left( \limsup_{t\rightarrow\infty}\frac{f(t)}{h(t)}\right).
  \eeqs
Here, we use the notation $\phi(\infty)=\infty$.
\end{enumerate}
  \end{lemma}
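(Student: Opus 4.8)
The plan is to establish both parts by elementary comparison arguments, using only the continuity of $y$, the positivity of $h$, the nonnegativity of $f$, and the fact that $\phi$ is a continuous increasing bijection of $[0,\infty)$ onto itself, so that $\phi(0)=0$, $\phi^{-1}(0)=0$, and both $\phi$ and $\phi^{-1}$ are strictly increasing and continuous. The guiding principle in every case is the following: once $y$ rises to or above a suitable barrier, the right-hand side $-h(t)\phi^{-1}(y(t))+f(t)$ of the differential inequality is $\le 0$, so $y$ cannot climb any higher.

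For part (i) I would fix $T>0$ and aim to prove $y(T)\le y(0)+\phi(\mathcal{M}(T))$; since $T$ is arbitrary this is exactly the assertion. The degenerate case $\mathcal{M}(T)=0$ I would dispose of first: then $0\le f(t)/h(t)\le\mathcal{M}(t)\le\mathcal{M}(T)=0$ on $[0,T]$ forces $f\equiv 0$ there, so $y'\le -h\,\phi^{-1}(y)\le 0$ and the bound is immediate. Otherwise $\phi(\mathcal{M}(T))>0$, and I would argue by contradiction: assuming $y(t^*)>y(0)+\phi(\mathcal{M}(T))$ for some $t^*\in(0,T]$, set $t_0=\sup\{t\in[0,t^*):y(t)\le y(0)+\phi(\mathcal{M}(T))\}$, which is well defined, satisfies $t_0<t^*$, and (by continuity of $y$, since $y(0)$ lies strictly below the barrier) has $y(t_0)=y(0)+\phi(\mathcal{M}(T))$ and $y>y(0)+\phi(\mathcal{M}(T))$ on $(t_0,t^*]$. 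On that interval $\phi^{-1}(y(t))\ge\mathcal{M}(T)\ge f(t)/h(t)$, hence $y'(t)\le -h(t)\mathcal{M}(T)+f(t)\le 0$, so $y(t^*)\le y(t_0)$, a contradiction.

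For part (ii) I would set $L=\limsup_{t\to\infty}f(t)/h(t)$ (the claim being trivial if $L=\infty$), fix $\varepsilon>0$, choose $T_\varepsilon$ so that $f(t)/h(t)\le L+\varepsilon$ for all $t\ge T_\varepsilon$, and prove $\limsup_{t\to\infty}y(t)\le\phi(L+\varepsilon)$; sending $\varepsilon\to 0^+$ and using continuity of $\phi$ then yields $\limsup_{t\to\infty}y(t)\le\phi(L)$. I expect two cases. If $y(T')\le\phi(L+\varepsilon)$ at some $T'\ge T_\varepsilon$, the first-crossing argument from part (i), now with the constant barrier $\phi(L+\varepsilon)$, traps $y$ at or below $\phi(L+\varepsilon)$ for all $t\ge T'$. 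If instead $y(t)>\phi(L+\varepsilon)$ for all $t\ge T_\varepsilon$, the same sign computation gives $y'\le 0$ there, so $y$ decreases to some limit $\ell\ge\phi(L+\varepsilon)$; should $\ell>\phi(L+\varepsilon)$, I would extract the strictly positive constant $c=\phi^{-1}(\ell)-(L+\varepsilon)$, note $y'(t)\le -c\,h(t)$ for large $t$, and integrate against $\int_0^\infty h=\infty$ to force $y(t)\to-\infty$, contradicting $y\ge 0$; hence $\ell=\phi(L+\varepsilon)$, and in either case the bound holds.

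I do not foresee a deep obstacle, as this is a standard differential-inequality lemma (cf.\ Lemma A.1 of \cite{HIKS1}); the only points demanding care are the treatment of the degenerate situation in which the barrier sits at the bottom point $0$ of the range of $\phi$ (handled by the special case $\mathcal{M}(T)=0$ and by $\phi^{-1}(0)=0$), and the step in the second case of part (ii) where one must produce a \emph{strictly} positive slope $c$, not merely $y'<0$, in order to play it against the divergence of $\int^\infty h$. If $y$ is assumed only continuous, the inequality $y'\le\cdots$ is read in the absolutely-continuous (almost-everywhere) sense and all the monotonicity comparisons above remain valid after integration, with no new idea required.
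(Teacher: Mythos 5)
Your argument is correct: the first-crossing/barrier comparison in part (i) and the two-case analysis in part (ii) (trapping below the level $\phi(L+\varepsilon)$, or else extracting a strictly positive decay rate $c$ and integrating against $\int^\infty h=\infty$) are sound, and the degenerate case $\mathcal M(T)=0$ and the limit $\varepsilon\to 0^+$ are handled properly. Note that the paper does not prove this lemma at all—it simply cites Lemma A.1 of \cite{HIKS1}—and your proof is essentially the standard comparison argument used there, so there is nothing to reconcile; the only implicit hypothesis you invoke (enough regularity of $y$ to integrate the differential inequality) is exactly what the paper's standing assumptions provide.
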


\begin{lemma}[c.f.~\cite{HI2}, Proposition 3.7]\label{supinf}
Let $\phi(z)=c(z+z^\gamma)$ for all $z\ge 0$, where $c>0$ and $1<\gamma< 2$. 
Suppose $y(t)\ge 0$ is a continuous function on $[0,\infty)$ such that
 \beqs
 y'(t)\le -\phi^{-1}(y(t)) +f(t)\quad \forall t>0,
 \eeqs
  where  $f(t)\ge 0$  is a function in $C([0,\infty))\cap C^1((0,\infty))$.
  
  Assume $\beta\eqdef \limsup_{t\to\infty}[f'(t)]^-$ is finite.
Then there is $T>0$ such that
\beq\label{correctineq} 
y(t)\le C\big(1+\beta^\frac{\gamma}{2-\gamma}+f(t)^\gamma\big )\text{ for all } t>T,
\eeq
where 
$C=3[32(1+c)]^\frac2{2-\gamma}.$
\end{lemma}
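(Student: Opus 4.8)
The plan is to quantify the hypothesis as follows: since $\beta=\limsup_{t\to\infty}[f'(t)]^-$ is finite, there is $T_1>0$ with $f'(t)\ge-(\beta+1)$ for all $t\ge T_1$; then, for each large $t$, I would bound $y(t)$ by tracking $y$ backwards in time through the region where the dissipation dominates. Everything rests on two elementary facts about $\phi(z)=c(z+z^\gamma)$. First, $\phi^{-1}(y)\ge (y/2c)^{1/\gamma}$ whenever $y\ge 2c$, which is seen by checking $\phi\bigl((y/2c)^{1/\gamma}\bigr)=c(y/2c)^{1/\gamma}+y/2\le y$ in that range (using $1/\gamma\le1$ and $y/2c\ge1$). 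Second, if $y(t)>\phi(2f(t))$ then $f(t)<\tfrac12\phi^{-1}(y(t))$, so the differential inequality forces $y'(t)<-\tfrac12\phi^{-1}(y(t))\le0$; if moreover $y(t)\ge2c$, combining with the first fact gives the pointwise ODE bound $y'(t)\le-\tfrac12(y(t)/2c)^{1/\gamma}$, equivalently $\tfrac{d}{dt}\,y(t)^{(\gamma-1)/\gamma}\le-\lambda$ with $\lambda=\tfrac{\gamma-1}{2\gamma}(2c)^{-1/\gamma}>0$ (this is where $\gamma>1$ enters).

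Now fix $t>T_1$. If $y(t)<2c$ or $y(t)\le\phi(2f(t))$, then $y(t)\le\max\{2c,\phi(2f(t))\}\le C(1+f(t)^\gamma)$ at once, using \eqref{ee5} to absorb $f\le1+f^\gamma$ and $\gamma<2$ to bound $2^\gamma<4$; so we may assume $y(t)>\phi(2f(t))$ and $y(t)\ge2c$. Let $(t_1,t]$, with $t_1\in[T_1,t)$, be the maximal half-open interval ending at $t$ on which $y(s)>\phi(2f(s))$; it is nonempty by continuity. On $(t_1,t]$ one has $y'<0$, so $y$ is strictly decreasing there, hence $y(s)\ge y(t)\ge2c$ throughout, so the ODE bound of the previous paragraph is valid on all of $(t_1,t]$. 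Integrating it yields $\lambda(t-t_1)\le y(t_1)^{(\gamma-1)/\gamma}-y(t)^{(\gamma-1)/\gamma}\le y(t_1)^{(\gamma-1)/\gamma}$, where $y(t_1)$ is the (existing) left limit.

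There are two sub-cases. If $t_1=T_1$, the displayed bound reads $t\le T_1+\lambda^{-1}y(T_1)^{(\gamma-1)/\gamma}$, so this case is vacuous as soon as $t>T\eqdef T_1+\lambda^{-1}y(T_1)^{(\gamma-1)/\gamma}$. If $t_1>T_1$, then maximality and continuity force $y(t_1)=\phi(2f(t_1))$. If $f(t_1)\le1$ then $y(t)\le y(t_1)=\phi(2f(t_1))\le\phi(2)\le C$; otherwise $\phi(2f(t_1))\le 2^{\gamma+1}c\,f(t_1)^\gamma$, so $t-t_1\le C_1 f(t_1)^{\gamma-1}$ with $C_1\eqdef\lambda^{-1}(2^{\gamma+1}c)^{(\gamma-1)/\gamma}$. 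Feeding this into $f(t_1)\le f(t)+(\beta+1)(t-t_1)$ gives the self-referential inequality $f(t_1)\le f(t)+(\beta+1)C_1 f(t_1)^{\gamma-1}$. Since $\gamma-1<1$, this forces either $f(t_1)\le2f(t)$, or $(\beta+1)C_1 f(t_1)^{\gamma-1}\ge\tfrac12 f(t_1)$, i.e.\ $f(t_1)^{2-\gamma}\le 2(\beta+1)C_1$, hence $f(t_1)\le C_2\bigl(1+\beta^{1/(2-\gamma)}\bigr)$ by \eqref{ee3} (valid because $1/(2-\gamma)>1$). In either branch $y(t)\le y(t_1)=\phi(2f(t_1))\le C\bigl(1+f(t)^\gamma+\beta^{\gamma/(2-\gamma)}\bigr)$, using $\phi(2s)\le C(1+s^\gamma)$, the inequality $1/(2-\gamma)\le\gamma/(2-\gamma)$, and \eqref{ee3}--\eqref{ee5} to consolidate lower-order powers.

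The genuinely delicate step is the last one, the case $t_1>T_1$: the self-referential estimate for $f(t_1)$ closes \emph{only} because the feedback exponent $\gamma-1$ is strictly below $1$, and this, together with $\gamma>1$ used in the decay bound for $y^{(\gamma-1)/\gamma}$, is exactly where the hypothesis $1<\gamma<2$ is essential. Everything else is bookkeeping: one fixes $T_1$ so that $f'\ge-(\beta+1)$ beyond it, carries the constants $\lambda,C_1,C_2$ through the three cases, and uses \eqref{ee3}--\eqref{ee5} to absorb all lower-order terms; organizing the numerology (e.g.\ replacing $2^{\gamma+1}c$-type factors by $32(1+c)$ and the three additive contributions by a single factor $3$) is what produces the stated constant $C=3\bigl[32(1+c)\bigr]^{2/(2-\gamma)}$.
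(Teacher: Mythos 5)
Your dynamical argument is sound and is genuinely different from the paper's proof: the paper does not reprove the estimate at all, but simply quotes inequality (3.19) from the proof of Proposition 3.7 in \cite{HI2} (namely $y(t)\le \phi\big(2f(t)+(1+8c\beta\gamma)^{\frac1{2-\gamma}}\big)$ for large $t$) and then only performs the arithmetic needed to extract the explicit constant. Your backwards-tracking scheme --- the two facts about $\phi$, the maximal interval $(t_1,t]$ on which $y>\phi(2f)$, the identity $y(t_1)=\phi(2f(t_1))$ when $t_1>T_1$, the bound $t-t_1\le \lambda^{-1}y(t_1)^{(\gamma-1)/\gamma}$, and the self-referential inequality $f(t_1)\le f(t)+(\beta+1)C_1f(t_1)^{\gamma-1}$ closed by $\gamma-1<1$ --- is correct as far as the qualitative statement goes, and it yields \eqref{correctineq} with \emph{some} constant depending on $c$ and $\gamma$; that weaker version is in fact all the paper uses downstream, where $C$ is generic.

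However, the lemma as stated asserts the specific constant $C=3[32(1+c)]^{\frac2{2-\gamma}}$, and the paper's proof exists precisely to track that constant; your closing claim that ``organizing the numerology'' produces it is unjustified and, as your argument stands, false. Your decay rate is $\lambda=\frac{\gamma-1}{2\gamma}(2c)^{-1/\gamma}$, so $C_1=\lambda^{-1}(2^{\gamma+1}c)^{\frac{\gamma-1}{\gamma}}=\frac{\gamma\,2^{\gamma+1}c}{\gamma-1}$, and in the branch $f(t_1)^{2-\gamma}\le 2(\beta+1)C_1$ the final bound acquires the factor $C_1^{\frac{\gamma}{2-\gamma}}$, i.e.\ a contribution of order $(\gamma-1)^{-\frac{\gamma}{2-\gamma}}$ multiplying $\beta^{\frac{\gamma}{2-\gamma}}$. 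This blows up as $\gamma\to1^+$, whereas $3[32(1+c)]^{\frac2{2-\gamma}}$ stays bounded in that limit (it tends to $3[32(1+c)]^2$), so no bookkeeping of $2^{\gamma+1}c$-type factors can convert your bound into the stated one uniformly for $\gamma\in(1,2)$. To recover a constant of the stated form within your scheme you would need a sharper backwards estimate (e.g.\ integrating $dy/\phi^{-1}(y)$ exactly, which gives only a logarithmic time cost in the near-linear regime $\gamma\approx1$), or else simply invoke \cite{HI2} as the paper does; as written, the proof of the lemma with its stated constant has a gap at this last step.
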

\begin{proof}
We track and calculate the constant $C$ explicitly. From (3.19) in the proof of Proposition 3.7 \cite{HI2},
\beqs
y(t)
\le \phi(2f(t)+(1+8c\beta\gamma)^\frac1{2-\gamma})
\le 2c(1+2f(t)+(1+\beta)^\frac{1}{2-\gamma} [16(1+ c)]^\frac1{2-\gamma})^\gamma.
\eeqs
Estimating $(1+\beta)^\frac{1}{2-\gamma} $ by \eqref{ee3}, we have 
\begin{align*}
y(t)
&\le 2c(1+2f(t)+2^{\frac{1}{2-\gamma}-1} (1+\beta^\frac{1}{2-\gamma} )[16(1+ c)]^\frac1{2-\gamma})^\gamma
\le 2c[32(1+ c)]^\frac\gamma{2-\gamma}(f(t)+1+\beta^\frac{1}{2-\gamma})^\gamma\\
&\le  2^\frac{2(2\gamma+1)}{2-\gamma}(1+ c)^\frac2{2-\gamma}3^{\gamma-1} \big(1+\beta^\frac{\gamma}{2-\gamma}+f(t)^\gamma\big )
\le 3\cdot [2^5(1+ c)]^\frac2{2-\gamma} \big(1+\beta^\frac{\gamma}{2-\gamma}+f(t)^\gamma\big ),
\end{align*}
which proves \eqref{correctineq}.
\end{proof}

\begin{lemma}\label{multdif}
Let $T\in\mathbb R$. Suppose the continuous functions $y(t),f(t)\ge 0$ and $h(t),g(t)>0$ on $[T,\infty)$ satisfy 
\beqs
y'(t)\le -h(t) y(t)+f(t)\quad \forall t>T,
\eeqs
\beq\label{A2cond}
\int_T^\infty h(\tau)d\tau=\infty\quad\text{and}\quad  \lim_{t\to\infty} \frac{g'(t)}{g(t)h(t)}=0,
\eeq
then
\beqs
\limsup_{t\to\infty}(g(t)y(t))\le \limsup_{t\to\infty}\Big(\frac{g(t)f(t)}{h(t)}\Big).
\eeqs
\end{lemma}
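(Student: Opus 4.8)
The plan is to reduce the claim to the already-established Lemma \ref{ODE3-lem}(ii), applied with the identity function $\id$ in place of $\phi$, to the product $z(t)=g(t)y(t)$. First I would compute, using the hypothesis $y'(t)\le -h(t)y(t)+f(t)$ and $g(t)>0$,
\begin{align*}
z'(t)=g'(t)y(t)+g(t)y'(t)
&\le g'(t)y(t)-g(t)h(t)y(t)+g(t)f(t)\\
&=-h(t)\Bigl(1-\frac{g'(t)}{g(t)h(t)}\Bigr)z(t)+g(t)f(t),
\end{align*}
where I rewrote $g'(t)y(t)=\tfrac{g'(t)}{g(t)}z(t)$ and $g(t)h(t)y(t)=h(t)z(t)$. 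Note $z$ is continuous and nonnegative since $g>0$ and $y\ge 0$.

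Next I would exploit the second condition in \eqref{A2cond}: given any $\varepsilon\in(0,1)$ there is $T_\varepsilon\ge T$ with $1-\tfrac{g'(t)}{g(t)h(t)}\ge 1-\varepsilon$ for all $t>T_\varepsilon$, so that, using $z(t)\ge 0$,
\beqs
z'(t)\le -(1-\varepsilon)h(t)\,z(t)+g(t)f(t),\qquad t>T_\varepsilon.
\eeqs
Since $\int_T^\infty h(\tau)\,d\tau=\infty$ forces $\int_{T_\varepsilon}^\infty (1-\varepsilon)h(\tau)\,d\tau=\infty$, Lemma \ref{ODE3-lem}(ii) — with $\phi=\id$, the time origin shifted to $T_\varepsilon$, and the roles played by $z$, $(1-\varepsilon)h$, $gf$ — yields
\beqs
\limsup_{t\to\infty}\bigl(g(t)y(t)\bigr)\le \limsup_{t\to\infty}\frac{g(t)f(t)}{(1-\varepsilon)h(t)}=\frac{1}{1-\varepsilon}\limsup_{t\to\infty}\frac{g(t)f(t)}{h(t)}.
\eeqs
Finally I would let $\varepsilon\to 0^+$ to obtain the conclusion; if the right-hand $\limsup$ is $+\infty$ the statement is trivial, and otherwise the limit passage is immediate.

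I do not anticipate a genuine obstacle here: the argument is elementary. The only points meriting care are (a) that $g$ is (eventually) differentiable, which is implicit in the appearance of $g'$ in the hypothesis; (b) that the error factor $1-g'/(gh)$ stays strictly positive for large $t$, which is precisely what the second limit in \eqref{A2cond} guarantees, and that replacing $h$ by the uniformly smaller $(1-\varepsilon)h$ does not destroy the divergence $\int h=\infty$; and (c) that Lemma \ref{ODE3-lem}(ii), although stated on $[0,\infty)$, applies verbatim on $[T_\varepsilon,\infty)$ since its conclusion only involves the behavior as $t\to\infty$.
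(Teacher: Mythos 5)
Your proof is correct. Note that the paper itself gives no argument for this lemma: it simply defers to Lemma A.3 of \cite{HKP1}, where the statement is proved directly (essentially by the same computation for $(gy)'$ followed by an integrating-factor/Gronwall step). What you do differently is to make the argument self-contained within this paper: after the substitution $z=gy$ and the absorption of the error term $g'/(gh)$ into a factor $(1-\varepsilon)$, you close the argument by invoking Lemma \ref{ODE3-lem}(ii) with $\phi=\id$, $h\mapsto(1-\varepsilon)h$, $f\mapsto gf$, and then let $\varepsilon\to0^+$. This buys a short proof that needs no external reference, at the modest cost of the $\varepsilon$-limiting step; the cited proof avoids the auxiliary lemma but is otherwise the same idea. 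Your three points of care are the right ones: the implicit differentiability of $g$ and $y$, the need for $z\ge0$ when replacing the factor $1-g'/(gh)$ by $1-\varepsilon$, the preservation of $\int h=\infty$ under multiplication by $1-\varepsilon$, and the harmless time shift to $[T_\varepsilon,\infty)$ since only the behavior as $t\to\infty$ enters the conclusion.
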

\begin{proof}
 Same as Lemma A.3 of \cite{HKP1}.
\end{proof}

\begin{lemma}\label{Btt}
 Let $f(t)\ge 0$ be a $C^1$-function on $(0,\infty)$. Assume
\beqs
\beta=\limsup_{t\to\infty} [f'(t)]^-<\infty.
\eeqs
Then there is $T>0$ such that for any $t_2>t_1>T$,
\beq\label{A4}
f(t_1)\le f(t_2)+(t_2-t_1)(\beta+1).
\eeq
\end{lemma}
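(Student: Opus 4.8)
The plan is to turn the hypothesis on $\limsup_{t\to\infty}[f'(t)]^-$ into a pointwise lower bound for $f'$ valid for all large $t$, and then integrate (equivalently, pass to a monotone auxiliary function).

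First I would unwind the assumption. Recall $[f'(t)]^-=\max\{-f'(t),0\}$, so $\beta=\limsup_{t\to\infty}[f'(t)]^-<\infty$ means that, taking $\varepsilon=1$ in the definition of $\limsup$, there exists $T>0$ such that $[f'(t)]^-<\beta+1$ for every $t>T$. Since $f'(t)\ge -[f'(t)]^-$ pointwise, this gives $f'(t)>-(\beta+1)$ for all $t>T$, equivalently $\frac{d}{dt}\bigl(f(t)+(\beta+1)t\bigr)>0$ on $(T,\infty)$.

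Next I would conclude. The auxiliary function $\Phi(t)\eqdef f(t)+(\beta+1)t$ is strictly increasing on $(T,\infty)$, so for any $t_2>t_1>T$ we have $\Phi(t_1)\le\Phi(t_2)$, i.e.\ $f(t_1)+(\beta+1)t_1\le f(t_2)+(\beta+1)t_2$, and rearranging yields exactly \eqref{A4}. Equivalently, using the $C^1$ regularity and the fundamental theorem of calculus, $f(t_1)-f(t_2)=-\int_{t_1}^{t_2}f'(s)\,ds\le\int_{t_1}^{t_2}(\beta+1)\,ds=(\beta+1)(t_2-t_1)$.

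There is essentially no obstacle here. The only points requiring mild care are the passage from the $\limsup$-of-the-negative-part hypothesis to the uniform bound $f'(t)>-(\beta+1)$ for $t$ large (choosing the slack $\varepsilon=1$ to match the $(\beta+1)$ in the statement), and noticing that the non-negativity of $f$ is not actually used in deriving \eqref{A4} itself---it is only needed where this lemma is applied elsewhere in the paper (e.g.\ in Theorems \ref{theo33} and in Section \ref{ptsec}).
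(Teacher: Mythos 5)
Your proposal is correct and follows essentially the same route as the paper: from $\limsup_{t\to\infty}[f'(t)]^-=\beta$ one gets $-f'(t)\le\beta+1$ for all $t>T$, and integrating $f'$ from $t_1$ to $t_2$ (your fundamental-theorem-of-calculus variant is exactly the paper's computation) yields \eqref{A4}. Your side remark that the non-negativity of $f$ is not needed for the inequality itself is accurate.
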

\begin{proof}
 There exists $T>0$ such that for all $t>T$ one has
$
-f'(t)\le \beta+1.
$
Let $t_2>t_1>T$. Then 
\beqs
f(t_1)=f(t_2)-\int_{t_1}^{t_2} f'(\tau)d\tau\le f(t_2)+\int_{t_1}^{t_2} (\beta+1)d\tau=f(t_2)+(t_2-t_1)(\beta+1),
\eeqs
which proves \eqref{A4}.
\end{proof}



\myclearpage

\bibliographystyle{abbrv}

\def\cprime{$'$}

\end{document}